\UseRawInputEncoding

\documentclass[11pt]{amsart}
    \usepackage{amsmath,amssymb,amsthm,amsfonts,amsrefs}
    \usepackage{geometry}
    \usepackage{graphicx}
    \usepackage[all]{xy}
    \usepackage[colorlinks, linkcolor=blue, anchorcolor=blue,
            citecolor=blue]{hyperref}

    \usepackage{fancyhdr}
    \cfoot{\thepage}
    \geometry{left=3.18cm,right=3.18cm,top=2.54cm,bottom=2.54cm}

    \usepackage{mathrsfs}
    \newtheorem{definition}{Definition}[section]
    \newtheorem{theorem}{Theorem}[section]
    \newtheorem{proposition}[theorem]{Proposition}
    \newtheorem{lemma}[theorem]{Lemma}
    \newtheorem{corollary}[theorem]{Corollary}
    \newtheorem{example}{Example}[section]
    \newtheorem{remark}[example]{Remark}

\begin{document}

\title{Lagrangian Cobordism Functor in Microlocal Sheaf Theory I}
\date{}
\author{Wenyuan Li}
\address{Department of Mathematics, Northwestern University.}
\email{wenyuanli2023@u.northwestern.edu}
\maketitle

\begin{abstract}
    Let $\Lambda_\pm$ be Legendrian submanifolds in the cosphere bundle $T^{*,\infty}M$. Given a Lagrangian cobordism $L$ of Legendrians from $\Lambda_-$ to $\Lambda_+$, we construct a functor $\Phi_L^*: Sh^c_{\Lambda_+}(M) \rightarrow Sh^c_{\Lambda_-}(M) \otimes_{C_{-*}(\Omega_*\Lambda_-)} C_{-*}(\Omega_*L)$ between sheaf categories of compact objects with singular support on $\Lambda_\pm$ and its right adjoint on sheaf categories of proper objects, using Nadler-Shende's work. This gives a sheaf theory description analogous to the Lagrangian cobordism map on Legendrian contact homologies and the right adjoint on their unital augmentation categories. We also deduce some long exact sequences and new obstructions to Lagrangian cobordisms between high dimensional Legendrian submanifolds.
\end{abstract}

\section{Introduction}

\subsection{Motivation and Background}
    A contact manifold is a $(2n+1)$-dimensional manifold $Y$ together with a maximally nonintegrable hyperplane distribution $\xi \subset TY$, and a Legendrian submanifold is an $n$-dimensional submanifold $\Lambda \subset Y$ such that $\xi|_\Lambda \subset T\Lambda$. Assume that $\xi \subset TY$ is defined by the kernel of a 1-form $\alpha \in \Omega^1(Y)$ called the contact form (this is equivalent to saying that the contact structure is coorientable).
    Given a contact form $\alpha$, the Reeb vector field $R_\alpha$ is the vector field such that
    $$\alpha(R_\alpha) = 1, \,\,\, \iota(R_\alpha)d\alpha = 0.$$

    Contact manifolds (resp.~Legendrian submanifolds) naturally arise as boundaries of exact symplectic manifolds $(X, d\lambda)$ (resp.~exact Lagrangian submanifolds $L \subset X$ where $\lambda|_L = df_L$) from the point of view of symplectic field theory \cite{SFT}. In particular, in the symplectization $(Y \times \mathbb{R}_r, d(e^r\alpha))$ of the contact manifold $(Y, \ker\alpha)$, following \cite[Section 2.8]{SFT}, Chantraine \cite{Chantraine} and Ekholm \cite{Ekcobordism}, for instance, considered the category of Lagrangian cobordisms.

\begin{definition}
    The category of Lagrangian cobordisms $\mathrm{Cob}(Y)$, has objects being Legendrian submanifolds $\Lambda \subset Y$ and morphisms $Hom(\Lambda_-, \Lambda_+)$ being exact Lagrangian submanifolds $L \subset (Y \times \mathbb{R}_r, d(e^r\alpha))$ with $e^r\alpha|_L = df_L$ such that
    $$L \cap (Y \times (-\infty, -r)) = \Lambda_- \times (-\infty, -r), \,\, L \cap (Y \times (r, +\infty)) = \Lambda_+ \times (r, +\infty).$$
    for some $r > 0$, and the primitive $f_L$ is a constant on $\Lambda_- \times (-\infty, -r)$ and $\Lambda_+ \times (r, +\infty)$. We call such an $L$ a Lagrangian cobordism from $\Lambda_-$ to $\Lambda_+$.
\end{definition}
\begin{remark}
    Compositions in $\mathrm{Cob}(Y)$ are defined by concatenating Lagrangian cobordisms. We will denote the concatenation of $L_0 \in Hom(\Lambda_0, \Lambda_1)$ and $L_1 \in Hom(\Lambda_1, \Lambda_2)$ by $L_0 \cup L_1$.
\end{remark}

    Under certain conditions on $(Y, \ker\alpha)$ (for example, when $Y$ has no closed Reeb orbits or when it has an exact symplectic filling) previous works in this field considered a dg algebra called Legendrian contact homology/Chekanov-Eliashberg dg algebra $\mathcal{A}(\Lambda)$ associated to a Legendrian submanifold $\Lambda$ generated by Reeb trajectories starting and ending on $\Lambda$ \cite{Chekdga,EESPtimeR}. We consider the version $\mathcal{A}_{C_{-*}(\Omega_*\Lambda)}(\Lambda)$ that is a dg algebra over the dg algebra $C_{-*}(\Omega_*\Lambda)$ where $\Omega_*\Lambda$ is the based loop space of $\Lambda$ \cite{EkholmLekili}. Following \cite{Ekcobordism,EHK}, a Lagrangian cobordism $L$ from $\Lambda_-$ to $\Lambda_+$ is expected to induce a homomorphism
    $$\Phi_L^*: \, \mathcal{A}_{C_{-*}(\Omega_*\Lambda_+)}(\Lambda_+) \rightarrow \mathcal{A}_{C_{-*}(\Omega_*\Lambda_-)}(\Lambda_-) \otimes_{C_{-*}(\Omega_*\Lambda_-)} C_{-*}(\Omega_*L).$$
    The representations of $\mathcal{A}_{C_{-*}(\Omega_*\Lambda)}(\Lambda)$ over $\Bbbk$ are called augmentations. Given an augmentation $\epsilon_-: \mathcal{A}_{C_{-*}(\Omega_*\Lambda_-)}(\Lambda_-) \rightarrow \Bbbk$, its restriction
    $$\epsilon_-|_{C_{-*}(\Omega_*\Lambda_-)}: C_{-*}(\Omega_*\Lambda_-) \rightarrow \Bbbk$$
    defines a rank 1 local system $\delta_{\Lambda_-} \in Hom(C_0(\Omega_*\Lambda_-); \Bbbk) \cong H^1(\Lambda_-; \Bbbk^\times)$. For any rank 1 local system $\delta_L \in \mathrm{Hom}(C_0(\Omega_*L); \Bbbk) \cong H^1(L; \Bbbk^\times)$ that restricts to $\delta_{\Lambda_-}$ on $\Lambda_-$, we are able to define the induced augmentation on $\Lambda_+$
    $$\epsilon_+ = \Phi_L(\epsilon_-, \delta_L): \mathcal{A}_{C_{-*}(\Omega_*\Lambda_+)}(\Lambda_+) \xrightarrow{\Phi_L^*} \mathcal{A}_{C_{-*}(\Omega_*\Lambda_-)}(\Lambda_-) \otimes_{C_{-*}(\Omega_*\Lambda_-)}C_{-*}(\Omega_*L) \xrightarrow{(\epsilon_-, \delta_L)} \Bbbk$$
    (see \cite{PanTorus} for the case of Legendrian knots).

    For augmentations of $\mathcal{A}(\Lambda)$ and respectively $\mathcal{A}_{C_{-*}(\Omega_*\Lambda)}(\Lambda)$, Bourgeois-Chantraine \cite{BCAug} defined a non-unital $\mathcal{A}_\infty$-category $\mathcal{A}ug_-(\Lambda)$, while Ng-Rutherford-Sivek-Shende-Zaslow \cite{AugSheaf} defined a (strictly) unital $\mathcal{A}_\infty$-category $\mathcal{A}ug_+(\Lambda)$ for Legendrian knots in $\mathbb{R}^3_\text{std}$\footnote{The $\pm$ signs come from the fact that $\mathcal{A}ug_-(\Lambda)$ can be defined using small negative Reeb pushoffs of $\Lambda$, while $\mathcal{A}ug_+(\Lambda)$ is defined using positive pushoffs of $\Lambda$. Following \cite[Section 1.2]{EkholmLekili}, $\mathcal{A}ug_+(\Lambda)$ should be understood as augmentations of $\mathcal{A}(\Lambda)$ while $\mathcal{A}ug_+(\Lambda)$ should be understood as augmentations of $\mathcal{A}_{C_{-*}(\Omega_*\Lambda)}(\Lambda)$.}. A Lagrangian cobordism $L$ from $\Lambda_-$ to $\Lambda_+$ is expected to induce a functor between the corresponding augmentation categories
    $$\Phi_L: \, \mathcal{A}ug_+(\Lambda_-) \times_{Loc^1(\Lambda_-)} Loc^1(L) \rightarrow \mathcal{A}ug_+(\Lambda_+),$$
    where $Loc^1(-)$ stands for rank 1 local systems.

    In comparison, in recent years microlocal sheaf theory has also shown to be a powerful tool in symplectic and contact geometry \cite{NadZas,Nad,Tamarkin1,Shendeconormal,STZ,STWZ,CasalsZas}. The category of proper sheaves with singular support on $\Lambda$ is understood to be certain infinitesimal Fukaya category of Lagrangians asymptotic to $\Lambda$ considered by Nadler-Zaslow \cite{NadZas}, and in $\mathbb{R}^3_\text{std}$ Ng-Rutherford-Sivek-Shende-Zaslow proved that the unital augmentation category $\mathcal{A}ug_+(\Lambda)$ of the Chekanov-Eliashberg dg algebra is a sheaf category consisting of microlocal rank 1 (i.e.~simple) objects \cite{AugSheaf}
    $$Sh^\text{s}_\Lambda(\mathbb{R}^2)_0 \simeq \mathcal{A}ug_+(\Lambda)$$
    (in higher dimensions, some results have also been obtained \cite{CasalsMurphydga,AugSheafknot,AugSheafsurface}).

    At the same time, for Weinstein manifolds $X$ with skeleton $\mathfrak{c}_X$ and a Legendrian submanifold in the contact boundary $\Lambda \subset \partial_\infty X$, Ganatra-Pardon-Shende \cite{GPS3} showed the equivalence between the microlocal sheaf category on the Lagrangian skeleton and the partially wrapped Fukaya category (defined in \cite{Sylvan} and \cite{GPS1})
    $$\mu Sh^c_{\mathfrak{c}_X \cup \Lambda \times \mathbb{R}}(\mathfrak{c}_X \cup \Lambda \times \mathbb{R}) \simeq \mathrm{Perf}\,\mathcal{W}(X, \Lambda)^\text{op}.$$
    According to a conjecture by Sylvan \cite[Section 6.4]{GPS3} and Ekholm-Lekili \cite{EkholmLekili}, and works by Ekholm-Lekili \cite{EkholmLekili}, Ekholm \cite{EkSurgery} and Asplund-Ekholm \cite{AsplundEkholm}, when $X$ is a subcritical Weinstein manifold (a Weinstein $2n$-manifold with no index-$n$ critical points), then it is also expected that
    $$\mathrm{Perf}\,\mathcal{W}(X, \Lambda) \simeq \mathrm{Perf}\,\mathcal{A}_{C_{-*}(\Omega_*\Lambda)}(\Lambda)$$
    where $\mathcal{A}_{C_{-*}(\Omega_*\Lambda)}(\Lambda)$ is equipped with $C_{-*}(\Omega_*\Lambda)$-coefficients. Therefore one may expect to construct a Lagrangian cobordism functor between microlocal sheaf categories.

\subsection{Main Results}
    In this paper we construct a Lagrangian cobordism functor between microlocal sheaf categories of compact objects, and its right adjoint functor between microlocal sheaf categories of proper objects, using the result of Nadler-Shende \cite{NadShen}. Our construction is independent of Floer theory and symplectic field theory.

\begin{definition}
    Let $(X, d\lambda)$ be an exact symplectic manifold with ideal contact boundary $\partial_\infty X$. Let the Liouville vector field $Z_\lambda$ be defined by $\iota(Z_\lambda)d\lambda = \lambda$, which we assume to be transverse to the ideal contact boundary. $X$ is a (finite type) Weinstein manifold if there is a proper Morse function $f$ on $X$ such that $Z_\lambda$ is a gradient-like vector field. Write $X_c = f^{-1}((-\infty,c])$. Then the skeleton of $X$ is
    $$\mathfrak{c}_X = \bigcup_{c\in \mathbb{R}}\bigcap_{z > 0}\varphi_{Z_\lambda}^{-z}(X_c).$$
\end{definition}

\begin{remark}
    Throughout the paper, we assume that all Weinstein manifolds $X$, Lagrangian cobordisms $L$ and Legendrian submanifolds $\Lambda_\pm$ are equipped with Maslov data compatible with respect to the inclusions \cite[Section 10]{NadShen}. When $\Bbbk$ is a ring, it requires the first Chern class of the Weinstein manifold $2c_1(X) = 0$, the Maslov class of the Lagrangian $\mu(L) = 0$ and that of the Legendrians $\mu(\Lambda_\pm) = 0$. When $\mathrm{char}\,\Bbbk \neq 2$, we need to assume in addition that $L$ and $\Lambda_\pm$ are relatively spin.
\end{remark}

    Here is our main theorem. Recall that when we say a Lagrangian cobordism $L$ from $\Lambda_-$ to $\Lambda_+$, $\Lambda_+$ is always the Legendrian at the convex boundary (when $r \in \mathbb{R}$ is sufficiently large) and $\Lambda_-$ is at the concave boundary (when $r \in \mathbb{R}$ is sufficiently small).

\begin{theorem}\label{main}
    Let $X$ be a Weinstein manifold with subanalytic skeleton $\mathfrak{c}_X$, $\Lambda_-, \Lambda_+ \subset \partial_\infty X$ be Legendrian submanifolds, and $L \subset \partial_\infty X \times \mathbb{R}$ an exact Lagrangian cobordism from $\Lambda_-$ to $\Lambda_+$. There is a cobordism functor between the microlocal sheaf categories of compact objects
    $$\Phi_L^*: \, \mu Sh^c_{\mathfrak{c}_X \cup \Lambda_+ \times \mathbb{R}}(\mathfrak{c}_X \cup \Lambda_+ \times \mathbb{R}) \longrightarrow \mu Sh^c_{\mathfrak{c}_X \cup \Lambda_- \times \mathbb{R}}(\mathfrak{c}_X \cup \Lambda_- \times \mathbb{R}) \otimes_{Loc^c(\Lambda_-)} Loc^c(L),$$
    and a fully faithful adjoint functor between microlocal sheaf categories of proper objects
    $$\Phi_L: \, \mu Sh^b_{\mathfrak{c}_X \cup \Lambda_- \times \mathbb{R}}(\mathfrak{c}_X \cup \Lambda_- \times \mathbb{R}) \times_{Loc^b(\Lambda_-)} Loc^b(L) \hookrightarrow \mu Sh^b_{\mathfrak{c}_X \cup \Lambda_+ \times \mathbb{R}}(\mathfrak{c}_X \cup \Lambda_+ \times \mathbb{R}),$$
    such that concatenations of cobordisms give rise to compositions of cobordism functors.

    In particular, when $X = T^*M$, there is a cobordism functor between compact sheaves
    $$\Phi_L^*: \, Sh^c_{\Lambda_+}(M) \longrightarrow Sh^c_{\Lambda_-}(M) \otimes_{Loc^c(\Lambda_-)} Loc^c(L),$$
    and a fully faithful adjoint functor between proper sheaves
    $$\Phi_L: \, Sh^b_{\Lambda_-}(M) \times_{Loc^b(\Lambda_-)} Loc^b(L) \hookrightarrow Sh^b_{\Lambda_+}(M).$$
\end{theorem}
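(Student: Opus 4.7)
The plan is to enlarge the skeleton by incorporating the Lagrangian cobordism $L$, apply the Mayer-Vietoris (descent) property of microlocal sheaves on subanalytic Lagrangians due to Nadler-Shende, and then compare the resulting category with $\mu Sh$ on each of the two endpoints using the shared behavior at infinity.

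First, translate $L$ along the Liouville flow so that, after translation, $L$ sits in the positive cylindrical end, with $L \cap (\partial_\infty X \times [0, R_-]) = \Lambda_- \times [0, R_-]$ and $L \cap (\partial_\infty X \times [R_+, \infty)) = \Lambda_+ \times [R_+, \infty)$. Form the closed subanalytic isotropic
$$\widetilde{\Sigma} = \mathfrak{c}_X \cup L,$$
to which Nadler-Shende assigns a stable presentable $\infty$-category $\mu Sh^c_{\widetilde{\Sigma}}(\widetilde{\Sigma})$. Covering $\widetilde{\Sigma}$ by a neighborhood $U$ of $\mathfrak{c}_X \cup \Lambda_- \times [0, R_-]$ and a neighborhood $V$ of the non-$\Lambda_-$-cylindrical part of $L$, with intersection $U \cap V$ a tubular neighborhood of a cylinder on $\Lambda_-$, the descent property yields
$$\mu Sh^c_{\widetilde{\Sigma}}(\widetilde{\Sigma}) \simeq \mu Sh^c_{\mathfrak{c}_X \cup \Lambda_- \times \mathbb{R}}(\mathfrak{c}_X \cup \Lambda_- \times \mathbb{R}) \otimes_{Loc^c(\Lambda_-)} Loc^c(L),$$
using that $\mu Sh^c$ on the smooth Lagrangian $V \simeq L$ is $Loc^c(L)$, that the $U$-piece reduces to the $\Lambda_-$-stopped category by Reeb-invariance along the cylinder, and that the intersection computes to $Loc^c(\Lambda_-)$.

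For the comparison with $\mu Sh^c_{\mathfrak{c}_X \cup \Lambda_+ \times \mathbb{R}}$, observe that $\widetilde{\Sigma}$ and $\mathfrak{c}_X \cup \Lambda_+ \times \mathbb{R}$ have identical germs at the convex boundary (both equal $\Lambda_+ \times [R_+, \infty)$ there). I expect a natural functor
$$\Phi_L^*: \mu Sh^c_{\mathfrak{c}_X \cup \Lambda_+ \times \mathbb{R}}(\mathfrak{c}_X \cup \Lambda_+ \times \mathbb{R}) \to \mu Sh^c_{\widetilde{\Sigma}}(\widetilde{\Sigma}),$$
constructed by viewing $\widetilde{\Sigma}$ as the $(X, \Lambda_+)$-skeleton enlarged by the extra Lagrangian $L$; in Nadler-Shende's wrapped-Fukaya-style description this corresponds to adding $L$ as a new generator. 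Sheaf-theoretically, the functor is obtained by combining a Hamiltonian isotopy of $\widetilde{\Sigma}$ that fixes the convex boundary with Nadler-Shende's contact-invariance of microlocal sheaves at infinity. Composing with the Mayer-Vietoris identification produces $\Phi_L^*$ as stated. The right adjoint $\Phi_L$ on proper sheaves exists by presentability, and its full faithfulness reduces to checking that the counit of the adjunction is an isomorphism; this is a local computation near the shared convex boundary, and should follow from the microlocal Morse-theoretic arguments of Nadler-Shende.

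The hard part will be the rigorous construction and functoriality of the comparison $\Phi_L^*$ between $\mu Sh^c_{\widetilde{\Sigma}}$ and $\mu Sh^c_{\mathfrak{c}_X \cup \Lambda_+ \times \mathbb{R}}$: this requires carefully combining non-characteristic propagation with a contact-isotopy invariance argument relating the two skeleta along the cylindrical ends. A secondary subtlety is the correct handling of the tensor product $\otimes_{Loc^c(\Lambda_-)}$ in the Mayer-Vietoris formula; this should follow from the cosheaf presentation of $\mu Sh^c$, but demands care at the attaching loci $\Lambda_\pm$ where $L$ meets the cylindrical ends, possibly via a local analysis based on arboreal models.
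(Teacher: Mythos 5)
Your overall architecture matches the paper's: enlarge the skeleton by $L$, use the cosheaf/descent property of $\mu Sh^c$ to identify the enlarged category with the pushout $\mu Sh^c_{\mathfrak{c}_X \cup \Lambda_-\times\mathbb{R}}\otimes_{Loc^c(\Lambda_-)} Loc^c(L)$ (via $\mu Sh^c_{\widetilde L}(\widetilde L)\simeq Loc^c(L)$), and then compare with the $\Lambda_+$-category. That first half is essentially correct. But the step you yourself flag as ``the hard part'' is where the actual content of the theorem lives, and the mechanism you sketch for it would not work. The comparison is \emph{not} obtained from a Hamiltonian isotopy fixing the convex end together with contact-invariance of $\mu Sh$, nor is full faithfulness a local computation near the shared convex boundary: the degeneration of $\mathfrak{c}_X\cup\Lambda_-\times\mathbb{R}\cup\widetilde L$ onto $\mathfrak{c}_X\cup\Lambda_+\times\mathbb{R}$ under the (negative-time) Liouville flow is a singular limit, not a contactomorphism, and the fully faithful functor into the limit category is Nadler--Shende's nearby-cycle theorem (their Theorems 7.1 and 8.7), which requires verifying a global \emph{gapped} condition --- no short Reeb chords between the members of the degenerating family. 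This is precisely where exactness of $L$ enters: the primitive $f_L$ (locally constant at the ends) is used to lift $L$ to an embedded Legendrian $\widetilde L$ in the contactization $X\times\mathbb{R}$, and the resulting family under the lifted Liouville flow is self-gapped. A local analysis near the convex end cannot see this obstruction --- the local picture there is identical for a non-exact cobordism, for which the conclusion is false in general.

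Two further corrections. First, you work with $\widetilde\Sigma=\mathfrak{c}_X\cup L$ inside $X$, but Nadler--Shende's categories are attached to Legendrians in the contactization (or in $T^{*,\infty}N$ after a Gromov $h$-principle embedding, with a choice of Maslov data/Lagrangian thickening of the normal bundle); you need the Legendrian lift $\widetilde L$, and its existence as an embedded cylindrical-ended Legendrian is exactly the exactness hypothesis. Second, the logical order in the paper is the reverse of yours: one first constructs the \emph{fully faithful} inclusion $\Phi\colon \mu Sh_{\mathfrak{c}_X\cup\Lambda_-\times\mathbb{R}\cup\widetilde L}\hookrightarrow \mu Sh_{\mathfrak{c}_X\cup\Lambda_+\times\mathbb{R}}$ from the nearby-cycle theorem; $\Phi_L^*$ is then its left adjoint (which exists because $\Phi$ preserves products and coproducts, and preserves compact objects because $\Phi$ preserves coproducts), and $\Phi_L$ on proper objects is the restriction of $\Phi$, inheriting full faithfulness for free. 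Your plan of constructing $\Phi_L^*$ first and then verifying that the counit of the adjunction is an isomorphism has no proposed proof of that isomorphism other than the local argument, which, as above, is insufficient.
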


\begin{remark}
    The tensor product of categories
    $$\mu Sh^c_{\mathfrak{c}_X \cup \Lambda_- \times \mathbb{R}}(\mathfrak{c}_X \cup \Lambda_- \times \mathbb{R}) \otimes_{Loc^c(\Lambda_-)} Loc^c(L)$$
    is defined as the homotopy push-out of the following diagram
    $$\mu Sh^c_{\mathfrak{c}_X \cup \Lambda_- \times \mathbb{R}}(\mathfrak{c}_X \cup \Lambda_- \times \mathbb{R}) \longleftarrow Loc^c(\Lambda_-) \longrightarrow Loc^c(L)$$
    where the arrows are corestriction functors \cite[Section 3.6]{NadWrapped} (see Section \ref{variouscat}) since $Loc^c(\Lambda) \simeq \mu Sh^c_\Lambda(\Lambda)$ \cite{Gui} (see Section \ref{microsection}). In particular, when $X = T^*M$ the corestriction functor
    $$Loc^c(\Lambda_-) \longrightarrow Sh^c_{\Lambda_-}(M)$$
    is the left adjoint to the microlocalization functor (see Section \ref{microsection}).
\end{remark}
\begin{remark}
    The category of compact local systems $Loc^c(\Lambda)$ is derived Morita equivalent to the chains on based loop space $C_{-*}(\Omega_*\Lambda)$, i.e.~$Loc^c(\Lambda) \simeq \mathrm{Perf}\,C_{-*}(\Omega_*\Lambda)$.
\end{remark}
\begin{remark}
    In the setting of partially wrapped Fukaya categories, the first functor is
    $$\Phi_L^*: \mathcal{W}(X, \Lambda_+) \longrightarrow \mathcal{W}(X, \Lambda_-) \otimes_{Loc^c(\Lambda_-)} Loc^c(L).$$
\end{remark}
\begin{remark}
    Our result also works in the singular setting, including immersed exact Lagrangian cobordisms with vanishing action self intersection points (which lifts to immersed Legendrians with no Reeb chords), and even subanalytic Lagrangian cobordisms between subanalytic Legendrians satisfying the condition above (see Remark \ref{sing-cob}).
\end{remark}

    While the techniques in Nadler-Shende \cite{NadShen} will ensure the first part about existence and full faithfulness of the functor, some techniques beyond that will be necessary when we prove the second part that concatenations of Lagrangian cobordisms define compositions of the functors. These parts together with invariance under compactly supported Hamiltonian isotopies will be included in the Section \ref{mainthm} and \ref{conca-inv}.

    When $L$ is a Lagrangian concordance from $\Lambda_-$ to $\Lambda_+$, i.e.~$L$ is diffeomorphic to $\Lambda_- \times \mathbb{R}$, we have in particular the following fully faithful embedding.

\begin{corollary}
    Let $X$ be a Weinstein manifold with subanalytic skeleton $\mathfrak{c}_X$, $\Lambda_-, \Lambda_+ \subset \partial_\infty X$ be Legendrian submanifolds. Let $L \subset \partial_\infty X \times \mathbb{R}$ be a Lagrangian concordance from $\Lambda_-$ to $\Lambda_+$. Then there is a fully faithful functor between the categories
    $$\Phi_L: \, \mu Sh^b_{\mathfrak{c}_X \cup \Lambda_- \times \mathbb{R}}(\mathfrak{c}_X \cup \Lambda_- \times \mathbb{R}) \hookrightarrow \mu Sh^b_{\mathfrak{c}_X \cup \Lambda_+ \times \mathbb{R}}(\mathfrak{c}_X \cup \Lambda_+ \times \mathbb{R}).$$
    In particular, when $X = T^*M$, there is a fully faithful functor between proper sheaves
    $$\Phi_L: \, Sh^b_{\Lambda_-}(M) \hookrightarrow Sh^b_{\Lambda_+}(M).$$
\end{corollary}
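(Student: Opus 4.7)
The plan is to derive this corollary directly from Theorem \ref{main} by simplifying the fiber product in the concordance case. The essential observation is that when $L$ is a Lagrangian concordance, $L$ is diffeomorphic to $\Lambda_- \times \mathbb{R}$ and so deformation retracts onto the negative cylindrical end $\Lambda_- \times \{-r\}$. Because categories of local systems are homotopy invariants, the restriction functor $Loc^b(L) \to Loc^b(\Lambda_-)$ induced by the inclusion $\Lambda_- \hookrightarrow L$ is an equivalence of $\infty$-categories.

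First I would identify the arrows defining the fiber product in Theorem \ref{main}. Dually to the push-out diagram on compact objects described in the remark after Theorem \ref{main}, the fiber product
\[
\mu Sh^b_{\mathfrak{c}_X \cup \Lambda_- \times \mathbb{R}}(\mathfrak{c}_X \cup \Lambda_- \times \mathbb{R}) \times_{Loc^b(\Lambda_-)} Loc^b(L)
\]
is the homotopy pullback of the restriction functors
\[
\mu Sh^b_{\mathfrak{c}_X \cup \Lambda_- \times \mathbb{R}}(\mathfrak{c}_X \cup \Lambda_- \times \mathbb{R}) \longrightarrow Loc^b(\Lambda_-) \longleftarrow Loc^b(L).
\]
Since the right arrow is an equivalence by the concordance hypothesis, the homotopy pullback projects isomorphically onto the left factor, so that the fiber product is equivalent to $\mu Sh^b_{\mathfrak{c}_X \cup \Lambda_- \times \mathbb{R}}(\mathfrak{c}_X \cup \Lambda_- \times \mathbb{R})$ itself. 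Precomposing the fully faithful embedding of Theorem \ref{main} with this equivalence yields the claimed embedding, and the case $X = T^*M$ is obtained by specializing $\mathfrak{c}_X = M$.

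The only point demanding genuine care is verifying that the arrow $Loc^b(L) \to Loc^b(\Lambda_-)$ entering the fiber product really is the pullback along the inclusion of the negative cylindrical end (rather than some subtler twist of the local-system data), so that the homotopy equivalence $L \simeq \Lambda_-$ translates into an equivalence of local-system categories. This should be transparent from the construction of $\Phi_L$ outlined after Theorem \ref{main} and spelled out in Section \ref{variouscat}, and is the only step where one needs to unpack the definitions rather than invoke a formal property.
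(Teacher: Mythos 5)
Your proof is correct and is exactly the argument the paper intends (the paper states the corollary as an immediate specialization of Theorem \ref{main} without further proof): since a concordance satisfies $L \cong \Lambda_- \times \mathbb{R}$, the restriction $Loc^b(L) \to Loc^b(\Lambda_-)$ along the negative end is an equivalence, so the fiber product collapses onto $\mu Sh^b_{\mathfrak{c}_X \cup \Lambda_- \times \mathbb{R}}(\mathfrak{c}_X \cup \Lambda_- \times \mathbb{R})$ and full faithfulness follows from the theorem. Your closing caveat is also correctly resolved by the construction in the proof of Theorem \ref{main}, where $\widetilde{L}$ is glued to $\Lambda_- \times \mathbb{R}$ precisely along the negative cylindrical end, so the arrow in the fiber product is indeed restriction along that inclusion.
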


    For Lagrangian cobordisms $L_0, L_1$ from $\Lambda_-$ to $\Lambda_+$, Chantraine-Dimitroglou Rizell-Ghiggini-Golovko \cite{Cthulhu} constructed an acyclic Cthulhu complex $\mathrm{Cth}(\Lambda_{\pm}, L_0, L_1)$ consisting of linearized contact homologies of $\Lambda_\pm$ and the Floer chain complex of $L_0, L_1$, and hence produced a number of exact sequences. Similar to Chantraine-Dimitroglou Rizell-Ghiggini-Golovko \cite{Cthulhu}, we are able to get a series of exact triangles from a Lagrangian cobordism, most of which are simple corollaries of the full faithfulness of our functor $\Phi_L$.

\begin{corollary}[Mayer-Vietoris exact triangle]\label{MVsequence}
    Let $X$ be a Weinstein manifold with subanalytic skeleton $\mathfrak{c}_X$, and $\Lambda_-, \Lambda_+ \subset \partial_\infty X$ be Legendrian submanifolds. Let $L \subset \partial_\infty X \times \mathbb{R}$ be an exact Lagrangian cobordism from $\Lambda_-$ to $\Lambda_+$. Suppose there are sheaves $\mathscr{F}_-, \mathscr{G}_- \in \mu Sh^b_{\mathfrak{c}_X \cup \Lambda_- \times \mathbb{R}}(\mathfrak{c}_X \cup \Lambda_- \times \mathbb{R})$ which restrict to constant local systems along $\Lambda_-$, and their microstalks at $\Lambda_-$ are $F, G$. Denoting by
    $$\mathscr{F}_+ = \Phi_L(\mathscr{F}_-, F_L), \,\,\, \mathscr{G}_+ = \Phi_L(\mathscr{G}_-, G_L),$$
    the images of $\mathscr{F}^-, \mathscr{G}^-$ glued with constant local systems on $L$ with stalks $F$ and $G$, then there is an exact triangle
    $$\Gamma(\mu hom(\mathscr{F}_+, \mathscr{G}_+)) \rightarrow \Gamma(\mu hom(\mathscr{F}_-, \mathscr{G}_-)) \oplus C^*(L; Hom(F, G)) \rightarrow C^*(\Lambda_-; Hom(F, G)) \xrightarrow{+1}.$$
\end{corollary}

    A flexible Weinstein manifold \cite[Chapter 11]{CE} is a Weinstein manifold whose attaching spheres of index-$n$ critical points are all loose Legendrian submanifolds \cite{loose}. Similar to the result in \cite{Cthulhu}, we are able to prove a stronger result that any Legendrian submanifold in the boundary of a flexible Weinstein manifold whose microlocal sheaf category of proper objects over $\Bbbk = \mathbb{Z}/2\mathbb{Z}$ is nontrivial does not admit a Lagrangian cap. Assuming the equivalence between partially wrapped Fukaya categories and Legendrian contact homologies, this means that any Legendrian submanifold whose contact homology over $\Bbbk = \mathbb{Z}/2\mathbb{Z}$ has a proper module does not admit a Lagrangian cap.

\begin{corollary}\label{cap}
    Let $X$ be a flexible Weinstein manifold with subanalytic skeleton $\mathfrak{c}_X$, and $\Lambda_- \subset \partial_\infty X$ be a connected Legendrian submanifold. Suppose $\mu Sh^b_{\mathfrak{c}_X \cup \Lambda_- \times \mathbb{R}}(\mathfrak{c}_X \cup \Lambda_- \times \mathbb{R}) $ contains a nontrivial object which restricts to a constant local system along $\Lambda_-$. Then there is no Lagrangian cobordism from $\Lambda_-$ to $\varnothing$ with vanishing Maslov class.
\end{corollary}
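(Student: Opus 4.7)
The plan is to argue by contradiction using the fully faithful functor from Theorem \ref{main}. Suppose there were a Lagrangian cobordism $L$ from $\Lambda_-$ to the empty Legendrian with vanishing Maslov class. Then Theorem \ref{main}, applied with $\Lambda_+ = \emptyset$, produces a fully faithful functor
\[\Phi_L: \mu Sh^b_{\mathfrak{c}_X \cup \Lambda_- \times \mathbb{R}}(\mathfrak{c}_X \cup \Lambda_- \times \mathbb{R}) \times_{Loc^b(\Lambda_-)} Loc^b(L) \hookrightarrow \mu Sh^b_{\mathfrak{c}_X}(\mathfrak{c}_X),\]
so it suffices to show (i) the target is the zero category, and (ii) the source contains a nonzero object.

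For (i), flexibility of $X$ is used. By a theorem of Lazarev, the wrapped Fukaya category of a flexible Weinstein manifold vanishes, and by the Ganatra-Pardon-Shende equivalence $\mu Sh^c_{\mathfrak{c}_X}(\mathfrak{c}_X) \simeq \mathrm{Perf}\,\mathcal{W}(X)^{\mathrm{op}}$ this gives $\mu Sh^c_{\mathfrak{c}_X}(\mathfrak{c}_X) = 0$. Since the ambient presentable category $\mu Sh_{\mathfrak{c}_X}$ is compactly generated on a subanalytic skeleton, the vanishing of its compact objects forces $\mu Sh^b_{\mathfrak{c}_X}(\mathfrak{c}_X) = 0$ as well.

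For (ii), let $\mathscr{F}^-$ be the hypothesized nonzero object with trivial monodromy along $\Lambda_-$ and microlocal stalk $F$. The trivial-monodromy assumption together with connectedness of $\Lambda_-$ identifies the microlocal local system on $\Lambda_-$ with the constant local system $\underline{F}_{\Lambda_-}$, which is the restriction of the trivial local system $\underline{F}_L$ under the corestriction $Loc^b(L) \to Loc^b(\Lambda_-)$; the vanishing of the Maslov class of $L$ is precisely what ensures that $\underline{F}_L$ is compatible with the prescribed Maslov data and hence defines an object of $Loc^b(L)$. The pair $(\mathscr{F}^-, \underline{F}_L)$ then defines a nonzero object of the homotopy fiber product, and applying $\Phi_L$ yields a nonzero object of $\mu Sh^b_{\mathfrak{c}_X}(\mathfrak{c}_X)$, contradicting (i).

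The main obstacle I anticipate is cleanly establishing the vanishing in (i): having $\mu Sh^c_{\mathfrak{c}_X} = 0$ from Lazarev's theorem, one must argue that every proper microlocal sheaf on $\mathfrak{c}_X$ lies in the presentable category compactly generated by $\mu Sh^c_{\mathfrak{c}_X}$, so that the vanishing propagates from compact to proper objects; the subanalytic hypothesis on $\mathfrak{c}_X$ should make this a standard consequence of the six-functor formalism for microlocal sheaves. A secondary, essentially bookkeeping point is to verify that $\underline{F}_L$ pulls back to $\underline{F}_{\Lambda_-}$ through the structure maps of the homotopy fiber product, which is automatic from the definition of the corestriction $Loc^b(L) \to Loc^b(\Lambda_-)$.
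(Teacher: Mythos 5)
Your proof is correct and rests on the same ingredients as the paper's: full faithfulness of $\Phi_L$ (Theorem \ref{main}), vanishing of the microlocal sheaf category of a flexible Weinstein manifold, and the existence of a nonzero object in the fiber-product source. The packaging, however, is different. The paper invokes the Mayer--Vietoris exact triangle of Corollary \ref{MVsequence}, specializes to $\mathscr{G}^- = \mathscr{F}^-$, and then reads off a contradiction at $H^0$ using the isomorphism $H^0(L;Hom(F,F)) \simeq H^0(\Lambda_-;Hom(F,F))$; that triangle is itself just the sheaf-property-plus-full-faithfulness unwound. You instead argue at the categorical level: a fully faithful functor into the zero category forces the source to vanish, contradicting the presence of the nonzero pair $(\mathscr{F}^-,\underline{F}_L)$. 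Your route is cleaner and avoids the (mild) need to observe that $L$ is connected in order to compare $H^0(L)$ with $H^0(\Lambda_-)$; the paper's route has the slight advantage of only using $\Gamma(\mu hom(\mathscr{F}^+,\mathscr{F}^+)) \simeq 0$ rather than the vanishing of the whole category of proper objects. Two small points to tighten: (1) the passage from $\mu Sh^c_{\mathfrak{c}_X}(\mathfrak{c}_X)=0$ to $\mu Sh^b_{\mathfrak{c}_X}(\mathfrak{c}_X)=0$ is best cited directly from Theorem \ref{perfcompact}, which gives $\mu Sh^b \simeq \mathrm{Fun}^{\mathrm{ex}}(\mu Sh^c,\mathrm{Perf}(\Bbbk))$, so the zero source category forces the functor category to be zero --- no need to appeal informally to compact generation of the ambient presentable category; (2) the arrow $Loc^b(L) \to Loc^b(\Lambda_-)$ you call a corestriction is the restriction functor (corestriction is its left adjoint in the $\mu Sh^c$ picture, as in Section \ref{variouscat}), and the vanishing Maslov class hypothesis is what makes $\mu Sh_{\widetilde L}$ identifiable with $Loc_L$ via Theorem \ref{Gui} in the first place, rather than something specific to the constant object.
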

\begin{remark}
    Since there are examples whose partially wrapped Fukaya category only has higher dimensional representations \cite{Lazexample,Lazexample2}, by the equivalence between Fukaya categories and sheaf categories \cite{GPS3} and the fact that \cite[Theorem 3.21]{NadWrapped} (see Section \ref{variouscat})
    $$\mu Sh_{\mathfrak{c}_X}^b(\mathfrak{c}_X) \simeq \mathrm{Fun}^\text{ex}(\mu Sh_{\mathfrak{c}_X}^c(\mathfrak{c}_X)^\text{op}, \mathrm{Perf}(\Bbbk)),$$
    this corollary is expected to be stronger than the result in \cite{Cthulhu}. Note that there are also examples whose Legendrian contact homology is nontrivial but has only higher dimensional representations \cite{Sivek}.
\end{remark}

\begin{remark}
    The assumption that the sheaf which restrict to a constant local system along $\Lambda_-$ is necessary. For example, the Clifford Legendrian torus $\Lambda_\text{Cliff}$ discussed in Theorem \ref{2graphs} does admit a microlocal rank 1 sheaf. However, there is a Lagrangian cobordism from $\Lambda_\text{Cliff}$ to a loose Legendrian sphere $\Lambda_{S^2,\text{loose}}$ \cite[Example 4.26]{CasalsZas} (see Section \ref{application}), and hence there is a Lagrangian cap by \cite{LagCap}.
\end{remark}

\begin{proof}[Proof of Corollary \ref{cap}]
    Let $\mathscr{F}_- \in \mu Sh^b_{\mathfrak{c}_X \cup \Lambda_- \times \mathbb{R}}(\mathfrak{c}_X \cup \Lambda_- \times \mathbb{R})$ be a nonzero object with stalk at $\Lambda_-$ being $F$. Suppose there is an exact Lagrangian cobordism from $\Lambda_-$ to $\varnothing$. Then since $\mathscr{F}_-$ restricts to a constant local system and the stalk $F$ at $\Lambda_-$ is nonzero, it can be extended to a constant local system on $L$ with stalk $F$. Glue $\mathscr{F}_-$ with the local system $F_L$ and write $\mathscr{F}_+ = \Phi_L(\mathscr{F}_-, F_L)$. Since $X$ is flexible, $\Gamma(\mu hom(\mathscr{F}_+, \mathscr{F}_+)) \simeq 0$. From the Mayer-Vietoris exact triangle we know that (by setting $\mathscr{G}_- = \mathscr{F}_-$ and $\mathscr{G}_+ = \mathscr{F}_+$)
    $$\Gamma(\mu hom(\mathscr{F}_-, \mathscr{F}_-)) \oplus C^*(L; Hom(F, F)) \simeq C^*(\Lambda_-; Hom(F, F)).$$
    However, the fact that $H^0(L; Hom(F, F)) \simeq H^0(\Lambda_-; Hom(F, F))$ will force
    $$H^0(\mu hom(\mathscr{F}_-, \mathscr{F}_-)) = 0, $$
    i.e.~$\mathrm{id}_{\mathscr{F}_-} = 0,$ which gives a contradiction.
\end{proof}

\begin{remark}
    The fact that flexible Weinstein domains have trivial microlocal sheaf categories follows from \cite{GPS3}, the vanishing result for their symplectic cohomologies \cite[Theorem 3.2]{Subflexible} (using the embedding trick \cite[Corollary 6.3]{LagCap}) and Abouzaid's generation criterion \cite{AbGenerate}. In fact using the embedding functor \cite{NadShen} (see Section \ref{sheafquan}) we can also get a sheaf theoretic proof of this fact.
\end{remark}

    The next exact sequence is the following, analogous to results in \cite{Cthulhu}*{Theorem 1.1} and Pan \cite{PanAug}*{Theorem 1.2}.

\begin{corollary}\label{Exactsequence}
    Let $X$ be a Weinstein manifold with subanalytic skeleton $\mathfrak{c}_X$, and $\Lambda_-, \Lambda_+ \subset \partial_\infty X$ be Legendrian submanifolds. Let $L \subset \partial_\infty X \times \mathbb{R}$ be an exact Lagrangian cobordism from $\Lambda_-$ to $\Lambda_+$. Suppose there are sheaves $\mathscr{F}_-, \mathscr{G}_- \in \mu Sh^b_{\mathfrak{c}_X \cup \Lambda_- \times \mathbb{R}}(\mathfrak{c}_X \cup \Lambda_- \times \mathbb{R})$ which restrict to constant local systems along $\Lambda_-$, and their stalks at $\Lambda_-$ are $F, G$. Denoting by
    $$\mathscr{F}_+ = \Phi_L(\mathscr{F}_-, F_L), \,\,\, \mathscr{G}_+ = \Phi_L(\mathscr{G}_-, G_L),$$
    the images of $\mathscr{F}_-, \mathscr{G}_-$ glued with constant local systems on $L$ with stalks $F$ and $G$, then there is an exact triangle
    $$\Gamma(\mu hom(\mathscr{F}_+, \mathscr{G}_+)) \rightarrow \Gamma(\mu hom(\mathscr{F}_-, \mathscr{G}_-)) \rightarrow C^*(L, \Lambda_-; Hom(F, G))[1] \xrightarrow{+1}.$$
\end{corollary}
\begin{remark}
    Following \cite[Theorem 1.6]{PanAug}, restricting to the subcategory $\mu Sh^b_{\mathfrak{c}_X \cup \Lambda_- \times \mathbb{R}}(\mathfrak{c}_X \cup \Lambda_- \times \mathbb{R})_\text{tri} \subset \mu Sh^b_{\mathfrak{c}_X \cup \Lambda_- \times \mathbb{R}}(\mathfrak{c}_X \cup \Lambda_- \times \mathbb{R})$ of microlocal sheaves which restrict to constant local systems along $\Lambda_-$, the functor defined by gluing with the constant local system on $L$
    $$\mu Sh^b_{\mathfrak{c}_X \cup \Lambda_- \times \mathbb{R}}(\mathfrak{c}_X \cup \Lambda_- \times \mathbb{R})_\text{tri} \rightarrow \mu Sh^b_{\mathfrak{c}_X \cup \Lambda_+ \times \mathbb{R}}(\mathfrak{c}_X \cup \Lambda_+ \times \mathbb{R})_\text{tri}$$
    is injective on objects as long as $H^0(L, \Lambda_-) = 0$. The proof is the same as \cite{PanAug}, where one uses the fact that
    $$H^0(\mu hom(\mathscr{F}_+, \mathscr{G}_+)) \xrightarrow{\sim} H^0(\mu hom(\mathscr{F}_-, \mathscr{G}_-))$$
    preserves the identity.
\end{remark}

    In particular, when $\Lambda_- = \varnothing$, i.e.~when $L$ is an exact Lagrangian filling of $\Lambda_+$, by choosing the constant rank 1 local system on $L$, we are able to get a sheaf quantization $\mathscr{F}_+$ of $L$ and this recovers the Seidel isomorphism \cite{SeidelIso}. The first proof in sheaf theory when $X = T^*M$ is obtained by Jin-Treumann \cite{JinTreu}.

    Note that in contrary to \cite{SeidelIso}, the proof in sheaf theory does not require $\mathcal{W}(X)$ or $\mu Sh^c_{\mathfrak{c}_X}(\mathfrak{c}_X)$ to vanish (because the sheaf categories are always identified with Fukaya categories, but they are expected to be the Chekanov-Eliashberg dg algebra or its representations only when the ambient manifold is flexible).

\begin{corollary}[Nadler-Shende]
    Let $X$ be a Weinstein manifold with subanalytic skeleton $\mathfrak{c}_X$, and $\Lambda_+ \subset \partial_\infty X$ be a Legendrian submanifold. Let $\Bbbk$ be a ring. Let $L \subset X$ be an exact Lagrangian filling of $\Lambda_+$. Then there is $\mathscr{F}_+ \in \mu Sh^b_{\mathfrak{c}_X \cup \Lambda_+ \times \mathbb{R}}(\mathfrak{c}_X \cup \Lambda_+ \times \mathbb{R})$ such that
    $$\Gamma(\mu hom(\mathscr{F}_+, \mathscr{F}_+)) \simeq C^*(L; \Bbbk).$$
\end{corollary}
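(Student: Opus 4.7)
The plan is to view the exact Lagrangian filling $L$ as an exact Lagrangian cobordism from $\Lambda_- = \emptyset$ to $\Lambda_+$ and specialize the fully faithful adjoint functor $\Phi_L$ of Theorem \ref{main} to this degenerate case. Because $Loc^b(\emptyset)$ is the trivial category, the fiber product
$$\mu Sh^b_{\mathfrak{c}_X}(\mathfrak{c}_X) \times_{Loc^b(\emptyset)} Loc^b(L)$$
collapses to the plain product $\mu Sh^b_{\mathfrak{c}_X}(\mathfrak{c}_X) \times Loc^b(L)$, so the pair $(0, \Bbbk_L)$ consisting of the zero sheaf on the skeleton together with the trivial rank one local system on $L$ defines a canonical object. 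I would then set $\mathscr{F}^+ := \Phi_L(0, \Bbbk_L)$; the vanishing Maslov class hypothesis on $L$ (and relative spinness when $\mathrm{char}\,\Bbbk \neq 2$) furnish precisely the Maslov data required for $\Phi_L$ to be defined with $\Bbbk$-coefficients.

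To identify the endomorphism complex, I would apply Corollary \ref{Exactsequence} with $\mathscr{F}^- = \mathscr{G}^- = 0$, $F = G = \Bbbk$, and $\Lambda_- = \emptyset$. Both $\Gamma(\mu hom(\mathscr{F}^-, \mathscr{G}^-))$ and any cochain complex on $\Lambda_-$ vanish tautologically, while the relative complex $C^*(L, \emptyset; \Bbbk)$ reduces to $C^*(L; \Bbbk)$. The triangle
$$\Gamma(\mu hom(\mathscr{F}^+, \mathscr{F}^+)) \longrightarrow 0 \longrightarrow C^*(L; \Bbbk)[1] \xrightarrow{+1}$$
then yields $\Gamma(\mu hom(\mathscr{F}^+, \mathscr{F}^+)) \simeq C^*(L; \Bbbk)$ directly. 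Equivalently, one may appeal to the full faithfulness of $\Phi_L$, which identifies the target endomorphism complex with $\mathrm{Hom}_{Loc^b(L)}(\Bbbk_L, \Bbbk_L) \simeq C^*(L; \Bbbk)$.

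The only genuinely new point to check is that the $\Lambda_- = \emptyset$ limit is legitimately subsumed by the framework of Theorem \ref{main} and Corollary \ref{Exactsequence}: that the fiber product and corestriction diagrams degenerate as claimed when the intermediate category is trivial, and that the trivial local system on the non-compact filling $L$ genuinely represents a bounded object of $Loc^b(L)$ (which is standard, since $L$ deformation retracts onto a compact core). No new symplectic or microlocal input is required, and the conclusion recovers Seidel's filling isomorphism \cite{SeidelIso} in microlocal sheaf theory, in parallel with \cite{JinTreu}.
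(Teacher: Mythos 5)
Your proposal is correct and takes essentially the same route as the paper: the paper's proof is simply to pick the rank $1$ trivial local system in $\mu Sh^b_L(L)\simeq Loc^b(L)$ and invoke Corollary \ref{Exactsequence}, which is exactly what you do, with the degeneration $\Lambda_-=\emptyset$ (trivial fiber product, vanishing terms in the exact triangle) spelled out in slightly more detail.
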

\begin{proof}
    Pick the rank 1 constant local system on $\mu Sh^b_L(L) \simeq Loc^b(L)$. Then by Corollary \ref{Exactsequence} we can get the result.
\end{proof}

\subsection{Relations with Other Works}
    There are at least two classes of special Lagrangian cobordisms that appear in literature and are well studied in microlocal sheaf theory.

\subsubsection{Relation with sheaf quantization of Legendrian isotopy}
    When there is a Legendrian isotopy $\varphi_H^s,\,s \in I$, from $\Lambda_0$ to $\Lambda_1$, it will define a Lagrangian cobordism $L$ from $\Lambda_0$ to $\Lambda_1$ \cite[Section 4.2.3]{GroEliashGF,Chantraine}. Hence we have a fully faithful Lagrangian cobordism functor
    $$\Phi_{L}: Sh^b_{\Lambda_0}(M) \hookrightarrow Sh^b_{\Lambda_1}(M).$$
    On the other hand, Guillermou-Kashiwara-Schapira \cite{GKS} constructed a sheaf quantization functor $\Psi_H$ from a Hamiltonian isotopy given by taking convolution with an integral kernel. We will prove the following comparison theorem in Section \ref{compare1}.

\begin{theorem}\label{comparegks}
    Let $\Lambda_s\subset T^{*,\infty}M,\,s \in I$, be a Legendrian isotopy induced by $\varphi_H^s,\,s\in I$, with vanishing Maslov class, and $L$ the Lagrangian cobordism from $\Lambda_0$ to $\Lambda_1$ coming from the isotopy. Then for $\Phi_{L}$ the Lagrangian cobordism functor and $\Psi_H$ the sheaf quantization functor,
    $$\Phi_{L} \simeq \Psi_H: \, Sh^b_{\Lambda_0}(M) \rightarrow Sh^b_{\Lambda_1}(M).$$
\end{theorem}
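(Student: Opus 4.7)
The strategy is to realize both functors as the restriction to $\{s=1\}$ of a canonical sheaf on $M \times I$ extending $\mathscr{F}$ across the isotopy, and then apply uniqueness of sheaf quantization. After rescaling so that the cobordism ends are cylindrical, one parameterizes $L \subset T^{*,\infty}M \times \mathbb{R}_r$ by the family $\{(\varphi_H^s(x,\xi),\, s,\, -H_s(\varphi_H^s(x,\xi))) : (x,\xi) \in \Lambda_0,\ s \in I\}$. Viewing $I$ as the base direction of $M \times I$ rather than the symplectization direction, $L$ corresponds precisely to the Legendrian lift of the family $\{\Lambda_s\}$ in $T^{*,\infty}(M \times I)$ that governs the Guillermou-Kashiwara-Schapira construction.

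Given $\mathscr{F} \in Sh^b_{\Lambda_0}(M)$, I would unpack $\Phi_L(\mathscr{F})$ as follows. The cobordism functor of Theorem \ref{main}, applied with the trivial rank $1$ local system on $L$, produces a sheaf $\widetilde{\mathscr{F}} \in Sh^b(M \times I)$ whose singular support lies in the Lagrangian traced out by $\Lambda_s$ (together with cotangent directions over the caps), whose restriction at $s=0$ is $\mathscr{F}$, and which satisfies $\widetilde{\mathscr{F}}|_{M \times \{1\}} = \Phi_L(\mathscr{F})$. Independently, convolution with the GKS kernel $K \in Sh(M \times M \times I)$ produces $K \ast_M \mathscr{F} \in Sh^b(M \times I)$ with the same singular support estimate and the same restriction $\mathscr{F}$ at $s = 0$; by definition $\Psi_H(\mathscr{F}) = (K \ast_M \mathscr{F})|_{M \times \{1\}}$.

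Now I would invoke the non-characteristic deformation lemma of Guillermou-Kashiwara-Schapira: a sheaf on $M \times I$ with singular support in the family $\{\Lambda_s\}$ is determined up to canonical isomorphism by its restriction at $s = 0$. This forces $\widetilde{\mathscr{F}} \simeq K \ast_M \mathscr{F}$, and restricting at $s = 1$ yields $\Phi_L(\mathscr{F}) \simeq \Psi_H(\mathscr{F})$. The same uniqueness applied to morphisms promotes this pointwise identification to a natural isomorphism of functors. The vanishing Maslov class hypothesis ensures that both constructions produce $\mathbb{Z}$-graded sheaves compatibly.

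The main obstacle is bridging the two frameworks: the Nadler-Shende definition of $\Phi_L$ is formulated in terms of wrapping in the symplectization $T^{*,\infty}M \times \mathbb{R}_r$, while GKS works directly on $M \times I$. One must verify that the output of the cobordism functor in the symplectization picture really corresponds, under the standard dictionary between conic sheaves on $M \times I$ and sheaves on $M$ with parameter in $I$, to a sheaf on $M \times I$ with singular support in the family $\{\Lambda_s\}$ and with the expected restriction at $s = 0$. Once this translation is carefully set up, the uniqueness of sheaf quantization gives the comparison for free.
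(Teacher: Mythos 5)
Your overall strategy is the same as the paper's: realize both $\Phi_L(\mathscr{F})$ and $\Psi_H(\mathscr{F})$ as restrictions at $s=1$ of a canonical sheaf on $M\times I$ with singular support in the Legendrian movie $\Lambda_H$, match the initial condition at $s=0$, and conclude via GKS uniqueness. You also correctly identify the main obstacle at the end of your write-up. However, the gap you name is precisely where the proof lives, and your proposed way of crossing it does not work as stated.

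The issue is your claim that the Nadler--Shende construction ``produces a sheaf $\widetilde{\mathscr{F}}\in Sh^b(M\times I)$'' with singular support in the family $\{\Lambda_s\}$. It does not: $\Phi_L$ by construction outputs a sheaf on $M$ (indeed, the doubled sheaf produced during antimicrolocalization lives on $M\times\mathbb{R}\times(0,1]$, where the last factor is the Liouville-flow parameter, not the isotopy time $s$). Your suggestion that one may view $L\subset T^{*,\infty}M\times\mathbb{R}_r$ as ``precisely the Legendrian lift of the family $\{\Lambda_s\}$ in $T^{*,\infty}(M\times I)$'' conflates the symplectization parameter $r$ with the isotopy parameter $s$, and conflates an exact Lagrangian in $T^*M$ with a Legendrian in $T^{*,\infty}(M\times I)$; these live in different spaces and the cobordism functor is not applied to either one directly. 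To get an actual sheaf on $M\times I$ from the Nadler--Shende machine, the paper constructs an auxiliary \emph{parameterized} cobordism $\overline{L}$ from $\Lambda\times I$ to $\Lambda_H$, built from the 2-parameter Hamiltonian flow $\overline{\varphi}_{\overline{H}}^t(x,\xi,s,\sigma)=(\varphi_H^{st}(x,\xi),s,\sigma-sH\circ\varphi_H^{st}(x,\xi))$, and applies $\Phi_{\overline{L}}$ to $\pi^{-1}\mathscr{F}$. Even then, one must still prove that restriction to a slice $\{s\}\times M$ commutes with the cobordism functor, i.e.\ $i_s^{-1}\Phi_{\overline{L}}(\pi^{-1}\mathscr{F})\simeq\Phi_{L_s}(\mathscr{F})$; this is Lemma~\ref{restrict-compare}, which in turn rests on a nontrivial base change Lemma~\ref{basechange} (an isomorphism $\overline{i}^{-1}\overline{j}_*\simeq j_*i^{-1}$ that only holds under specific singular support and gappedness hypotheses, not in general). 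Your proposal treats this entire chain as ``a standard dictionary'' to be ``carefully set up,'' but it is the substance of the proof, and simply invoking GKS uniqueness after it is asserted does not close the argument.
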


\subsubsection{Relation with sheaf quantization of Lagrangian fillings}
    When $\Lambda_- = \varnothing$, a Lagrangian cobordism from $\Lambda_-$ to $\Lambda_+$ is a Lagrangian filling. Jin-Treumann \cite{JinTreu} constructed a sheaf quantization functor $Loc^b(L) \rightarrow Sh_{\Lambda_+}^b(M)$ from any Lagrangian filling $L$ of $\Lambda_+ \subset T^{*,\infty}M$, that is, a fully faithful embedding
    $$\Psi_L^{JT}: \,Loc^b(L) \hookrightarrow Sh^b_{\Lambda_+}(M).$$
    We will show the following comparison result in Section \ref{compare2}.

\begin{proposition}\label{comparejt}
    Let $U \subset M$ be an open subset with subanalytic boundary, $\Lambda_+ = \nu^{*,\infty}_{U,-}M$ be the inward unit conormal and $L$ the standard Lagrangian brane associated to $U$ with Legendrian boundary $\Lambda_+$. Then for $\Phi_{L}$ the Lagrangian cobordism functor and $\Psi_L^{JT}$ the Jin-Treumann sheaf quantization functor,
    $$\Phi_L \simeq \Psi_L^{JT}: \,Loc^b(L) \hookrightarrow Sh^b_{\Lambda_+}(M).$$
\end{proposition}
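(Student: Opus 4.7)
The plan is to reduce the comparison to the trivial rank one local system $\Bbbk_L$ and then extend by a module-functor argument. Both $\Phi_L$ and $\Psi_L^\text{JT}$ are fully faithful, colimit-preserving functors $Loc^b(L) \hookrightarrow Sh^b_{\Lambda_+}(M)$ which are natural with respect to tensoring by local systems on $L$ (equivalently on $U$, via $L \simeq U$). Since $Loc^b(L)$ is generated under colimits and tensor twists from its monoidal unit $\Bbbk_L$, a single natural isomorphism $\Phi_L(\Bbbk_L) \simeq \Psi_L^\text{JT}(\Bbbk_L)$ compatible with the endomorphism action upgrades uniquely to a natural isomorphism of functors.

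Choosing a defining function $\rho$ for $\overline{U}$, the costandard brane $L$ is (up to Hamiltonian isotopy) the conormal push-off of $\overline{U}$ into the outward $\rho$-direction, asymptoting at infinity to $\Lambda_+ = \nu^{*,\infty}_{U,+}M$, and diffeomorphic to $U$. For Jin-Treumann, the construction applied to this specific costandard brane gives, by design, $\Psi_L^\text{JT}(\Bbbk_L) \simeq Rj_*\Bbbk_U$, where $j: U \hookrightarrow M$ is the open inclusion; the outward conormal singular support matches the Verdier dual computation $SS(Rj_*\Bbbk_U) \supset \nu^{*,\infty}_{U,+}M$. For $\Phi_L$, I would trace through the Nadler-Shende construction: the sheaf quantization of $L$ regarded as a filling (cobordism from $\emptyset$) restricted to the positive end $\{r \gg 0\}$ produces a sheaf supported on $\overline{U}$ (since $L$ projects to $\overline{U}$) whose restriction to $U$ is the constant sheaf $\Bbbk_U$ (the defining property of microlocal sheaf quantization on the smooth interior of a Lagrangian equipped with the trivial local system). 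Since any object of $Sh^b_{\Lambda_+}(M)$ that restricts to $\Bbbk_U$ on $U$ is canonically identified with $Rj_*\Bbbk_U$ via the unit of the $j^* \dashv Rj_*$ adjunction (once one checks the singular support is concentrated on $\Lambda_+$ at $\partial U$), this yields $\Phi_L(\Bbbk_L) \simeq Rj_*\Bbbk_U$.

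The extension from $\Bbbk_L$ to arbitrary local systems proceeds by noting that both functors are compatible with the action of $Loc^b(M)$ by tensor product (both sheaf quantizations twist correctly under local system coefficients on the source Lagrangian). Since a local system $\mathcal{L}$ on $L$ is obtained from $\Bbbk_L$ by tensoring with its corresponding local system on $U \simeq L$ and extending by the same pushforward mechanism, the identification $\Phi_L(\mathcal{L}) \simeq \Psi_L^\text{JT}(\mathcal{L}) \simeq Rj_*\widetilde{\mathcal{L}}$ follows formally.

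The main obstacle is the identification $\Phi_L(\Bbbk_L) \simeq Rj_*\Bbbk_U$, since the Nadler-Shende cobordism functor is defined abstractly and does not immediately produce a recognizable formula. To make this precise I would combine two inputs: the stalk computation on $U$ from the microlocal structure of the filling, and the Seidel-type isomorphism $\Gamma(\mu hom(\Phi_L(\Bbbk_L), \Phi_L(\Bbbk_L))) \simeq C^*(L; \Bbbk) \simeq C^*(U; \Bbbk)$ established in the Corollary above, which matches the endomorphism algebra $\Gamma(\mu hom(Rj_*\Bbbk_U, Rj_*\Bbbk_U))$. Together these pin down $\Phi_L(\Bbbk_L)$ uniquely in $Sh^b_{\Lambda_+}(M)$ as the costandard extension $Rj_*\Bbbk_U$, completing the comparison.
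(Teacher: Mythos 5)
Your proposal contains a concrete error in the identification of the target sheaf, and it also follows a genuinely different strategy from the paper's proof; both points are worth flagging.

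\textbf{The $Rj_*$ identification is wrong.} You claim $\Psi_L^\text{JT}(\Bbbk_L) \simeq Rj_*\Bbbk_U$ and that ``any object of $Sh^b_{\Lambda_+}(M)$ that restricts to $\Bbbk_U$ on $U$ is canonically identified with $Rj_*\Bbbk_U$.'' Neither is correct under the paper's conventions. The paper's Example on singular supports records that $\Bbbk_{\mathbb{R}^n\times(0,+\infty)}$ (i.e.\ $j_!\Bbbk_U$) has $SS^\infty$ equal to the \emph{outward} conormal, whereas $\Bbbk_{\mathbb{R}^n\times[0,+\infty)}$ (i.e.\ $Rj_*\Bbbk_U$ for smooth boundary) has $SS^\infty$ equal to the \emph{inward} conormal. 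Since $\Lambda_+ = \nu^{*,\infty}_{U,+}M$ is the outward conormal, $Rj_*\Bbbk_U$ does not even lie in $Sh^b_{\Lambda_+}(M)$, and $j_!\Bbbk_U$ does. The correct identification is $\Phi_L(\Bbbk_L)\simeq\Psi_L^\text{JT}(\Bbbk_L)\simeq j_!\Bbbk_U$, i.e.\ a sheaf supported on the \emph{open} subset $U$ with zero stalk on $\partial U$. This is exactly what the paper's computation finds: in both constructions the output has stalk $F$ on $U$ and $0$ on $M\setminus U$. The $j^*\dashv Rj_*$ unit is the wrong adjunction to invoke; to pin down the sheaf you need the vanishing on $M\setminus U$ (together with the $SS$ bound, which then forces vanishing on $\partial U$) and the $j_!\dashv j^*$ counit.

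\textbf{The reduction-to-the-unit strategy is different from the paper's, and not yet justified.} You propose to check the two functors agree on $\Bbbk_L$ and then extend by a $Loc$-module-functor argument. The paper does not do this: it directly runs an arbitrary $\mathscr{F}_L\in Loc^b(L)$ through both constructions, tracking stalks \emph{and microlocal monodromy} step by step. Your route is in principle available, but it hinges on establishing that $\Phi_L$ is a module functor over $Loc^b(L)$ (not merely $Loc^b(M)$, since $\pi_1(U)\to\pi_1(M)$ need not be surjective). For the abstractly-defined Nadler--Shende functor this compatibility is not automatic and would itself require a monodromy computation comparable to the one the paper carries out directly. Relatedly, the appeal to the Seidel-type endomorphism isomorphism is not sufficient to ``pin down'' the object: stalk and endomorphism data alone do not determine a sheaf in $Sh^b_{\Lambda_+}(M)$; it is the vanishing of stalks away from $U$ together with the microlocal monodromy identification that does the work in the paper's argument.
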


    In fact, using Nadler-Zaslow correspondence \cite{NadZas,Nad} or Viterbo's sheaf quantization construction \cite{ViterboSheaf}, if one can prove additionally the functoriality of $\Phi_L$ and $\Psi_L^{JT}$ as functors from infinitesimal Fukaya categories, then $\Phi_L \simeq \Psi_L^{JT}$ for any Lagrangian filling of any Legendrians $\Lambda_+$.

\subsubsection{Other proposals of the cobordism functor}\label{proposal}
    An exact Lagrangian cobordism $L$ in $Y \times \mathbb{R}$ from $\Lambda_-$ to $\Lambda_+$ can be lifted to a Legendrian cobordism $\widetilde{L}$ in $Y \times \mathbb{R}^2$ between Legendrians $\Lambda_-$ and $\Lambda_+$. Pan-Rutherford \cite{PanRuther} considered for $\Bbbk$-coefficient dg algebras (instead of loop space coefficients) a diagram
    $$\mathcal{A}(\Lambda_+) \rightarrow \mathcal{A}(\widetilde{L}) \xleftarrow{\sim} \mathcal{A}(\Lambda_-)$$
    and showed that this coincides with the usual dg algebra map induced by Lagrangian cobordisms by symplectic field theory.

    For the dg algebra with loop space coefficients, we thus conjecture that there is a diagram
    $$\mathcal{A}_{C_{-*}(\Omega_*\Lambda_+)}(\Lambda_+) \twoheadrightarrow \mathcal{A}_{C_{-*}(\Omega_*\widetilde{L})}(\widetilde{L}) \xleftarrow{\sim} \mathcal{A}_{C_{-*}(\Omega_*\Lambda_-)}(\Lambda_-) \otimes_{C_{-*}(\Omega_*\Lambda_-)} C_{-*}(\Omega_*\widetilde{L})$$
    or in the language of sheaf theory
    $$Sh_{\Lambda_+}^c(M) \twoheadrightarrow Sh_{\widetilde{L}}^c(M \times \mathbb{R}) \xleftarrow{\sim} Sh_{\Lambda_-}^c(M) \otimes_{Loc^c(\Lambda_-)} Loc^c(L)$$
    that coincides with our construction here. The right adjoint functor will thus be
    $$Sh_{\Lambda_+}^b(M) \hookleftarrow Sh_{\widetilde{L}}^b(M \times \mathbb{R}) \xrightarrow{\sim} Sh_{\Lambda_-}^b(M) \times_{Loc^b(\Lambda_-)} Loc^b(L)$$
    Here $Sh_{\widetilde{L}}^b(M \times \mathbb{R}) \rightarrow \mu Sh^b_{\widetilde{L}}(\widetilde{L}) \xrightarrow{\sim} Loc^b(\widetilde{L})$ is the microlocalization functor (Section \ref{microsection}) while $Sh_{\widetilde{L}}^b(M \times \mathbb{R}) \rightarrow Sh^b_{\Lambda_\pm}(M)$ are the restriction functors\footnote{The author is grateful to Roger Casals and Eric Zaslow for explaining to us this alternative approach.}.

\subsection{Applications to Legendrian Surfaces}
    In the past few years, Treumann-Zaslow \cite{TZ} and Casals-Zaslow \cite{CasalsZas} have developed systematic approaches to compute the number of microlocal rank 1 sheaves over $\mathbb{F}_q$ for certain Legendrian surfaces using flag moduli. Combining with our fully faithful cobordism functor on proper sheaves, we will be able to get new obstructions to Lagrangian cobordisms for these Legendrian surfaces.

\begin{figure}
  \centering
  \includegraphics[width=0.7\textwidth]{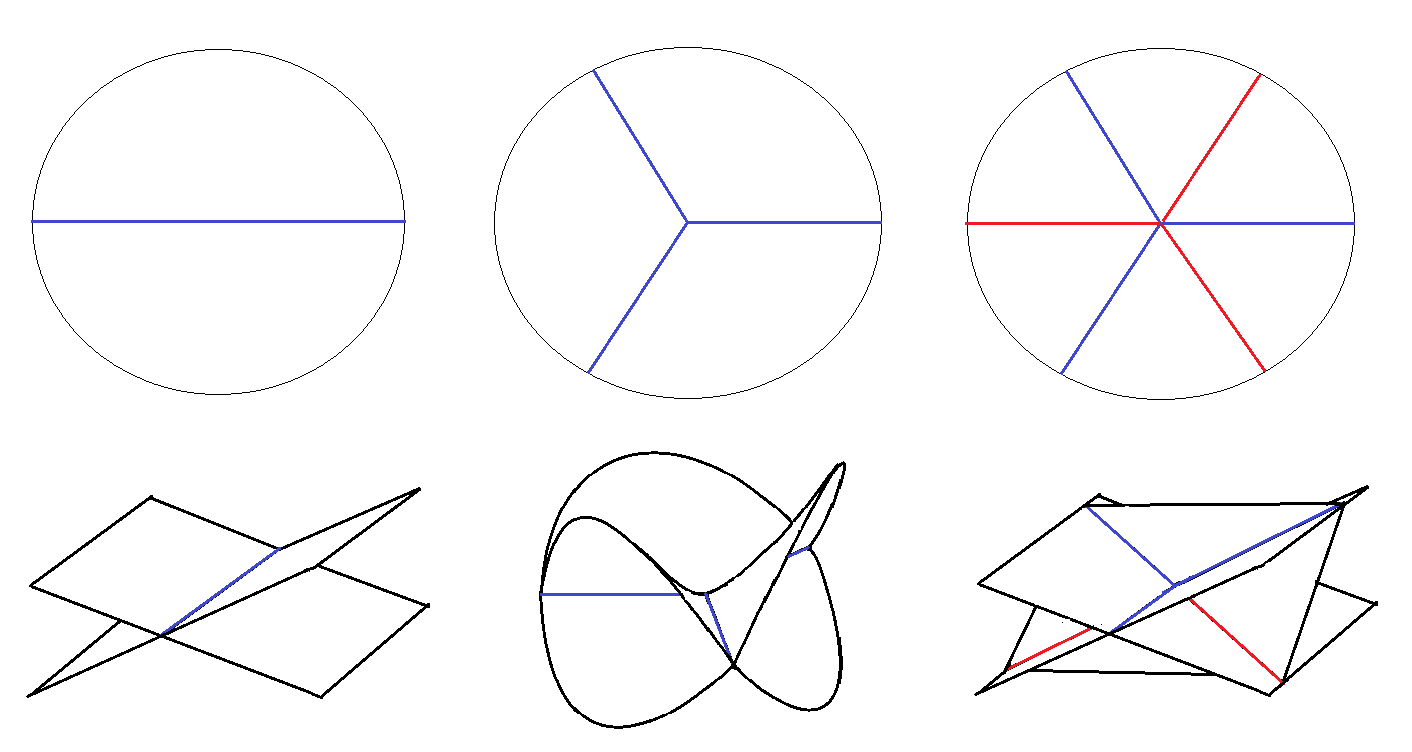}\\
  \caption{The front projection $\pi_\text{front}: J^1(\Sigma) \rightarrow \Sigma \times \mathbb{R}$ of the Legendrian surfaces corresponding to each planar $N$-graph on the top.}\label{weavedef}
\end{figure}

    First recall that Legendrian weaves \cite{CasalsZas} are Legendrian submanifolds in $J^1(\Sigma)$ that arise from planar $N$-graphs. Figure \ref{weavedef} roughly explains locally how an $N$-graph corresponds to the front projection of Legendrians.

\begin{figure}
  \centering
  \includegraphics[width=1.0\textwidth]{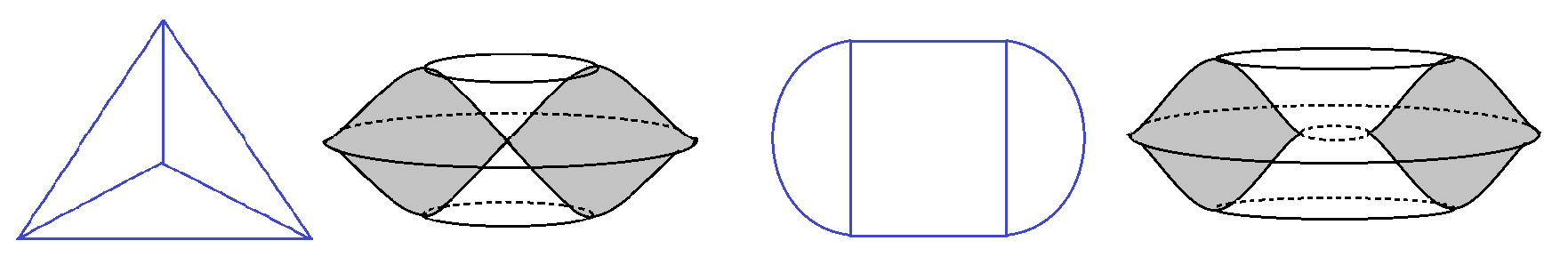}\\
  \caption{On the left is the Clifford Legendrian torus and its corresponding 2-graph, and on the right is the unknotted Legendrian torus and its corresponding 2-graph.}\label{clifunknot}
\end{figure}

%
%
%
    The following examples of Legendrian surfaces $\Lambda_{g,k}$ are considered in \cite{Rizchordexample} and \cite{ShenZas} ($\Lambda_{g,0}$ are the unknotted Legendrian surfaces and $\Lambda_{g,g}$ are Clifford Legendrian surfaces). Dimitroglou Rizell showed that those $\Lambda_{g,k}$'s admit $\mathbb{Z}/2\mathbb{Z}$-coefficient augmentations and generating families only when $k = 0$, and hence it may not be easy to study Lagrangian cobordisms between them when $k \geq 1$. However, using the Legendrian weave description, we are able to show the following.

\begin{theorem}\label{2graphs}
    Let $\Gamma_\text{Unknot}, \Gamma_\text{Cliff}$ be the 2-graphs in $S^2$ shown in Figure \ref{clifunknot}, and $\Lambda_\text{Unknot}, \Lambda_\text{Cliff}$ the corresponding Legendrian weaves in $J^1(S^2) \subset T^{*,\infty}\mathbb{R}^3$. Let $\Lambda_{g,k}$ be the Legendrian surface with genus $g$ by taking $k$ copies of $\Lambda_\text{Cliff}$ and $g-k$ copies of $\Lambda_\text{Unknot}$. Then
\begin{enumerate}
  \item for any $g' \leq g$, there are Lagrangian cobordisms from $\Lambda_{g,k}$ to $\Lambda_{g',k}$ and also from $\Lambda_{g',k}$ to $\Lambda_{g,k}$;
  \item (Dimitroglou Rizell) for any $k \geq 1$, there are no Lagrangian cobordisms with vanishing Maslov class from $\Lambda_{g,0}$ to $\Lambda_{g',k}$;
  \item for any $k \geq 1, k' \geq 0$, there are Lagrangian cobordisms $L$ from $\Lambda_{g,k}$ to $\Lambda_{g,k'}$ such that $\dim\mathrm{coker}(H^1(L) \rightarrow H^1(\Lambda_{g,k})) \geq 2;$
  \item for any $k < k'$, there are no Lagrangian cobordisms $L$ with vanishing Maslov class from $\Lambda_{g,k}$ to $\Lambda_{g,k'}$ such that $H^1(L) \twoheadrightarrow H^1(\Lambda_{g,k});$ in particular there are no such Lagrangian concordances.
\end{enumerate}
\end{theorem}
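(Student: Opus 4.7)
The plan is to combine the Legendrian weave calculus from \cite{CasalsZas} with the cobordism functor of Theorem \ref{main} and its Mayor-Vietoris triangle (Corollary \ref{MVsequence}). Parts (1) and (3) will be proved by explicit construction of cobordisms, while parts (2) and (4) will follow by counting microlocal rank $1$ sheaves via the Casals--Zaslow flag moduli and invoking full faithfulness of $\Phi_L$.

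For (1), since $\Lambda_{g,k}$ is a weave connected sum of $k$ copies of $\Lambda_\text{Cliff}$ and $g-k$ copies of $\Lambda_\text{Unknot}$, and since $\Lambda_\text{Unknot}$ admits both a Lagrangian filling and a Lagrangian cap (realized by local weave moves that correspond to elementary Lagrangian $1$-handle attachments and their time-reversals), concatenating $g - g'$ such elementary cobordisms on the unknotted summands will produce Lagrangian cobordisms $\Lambda_{g,k} \leftrightarrow \Lambda_{g',k}$ in both directions while leaving the $k$ Clifford summands untouched. For (3), I would take any Lagrangian cobordism $L_0$ relating $\Lambda_{g,k}$ to $\Lambda_{g,k'}$ (no Maslov condition is imposed here) and perform a connected sum in the interior with a closed exact Lagrangian torus $T^2 \subset X$ placed away from $L_0$; the resulting $L = L_0 \# T^2$ carries a rank-$2$ summand in $H^1(L)$ coming from $H^1(T^2)$ that does not restrict to $H^1(\Lambda_{g,k})$, so $\dim \mathrm{coker}(H^1(L) \to H^1(\Lambda_{g,k})) \geq 2$ by Mayer--Vietoris.

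For (2) and (4), full faithfulness of $\Phi_L$ says that distinct isomorphism classes in the domain $Sh^b_{\Lambda_-}(M) \times_{Loc^b(\Lambda_-)} Loc^b(L)$ map to distinct classes in $Sh^b_{\Lambda_+}(M)$. Restricting to microlocal rank $1$ objects and counting $\mathbb{F}_q$-points of the flag moduli $\mathcal{M}^1(\Lambda_\pm)$ of \cite{CasalsZas}, this will yield the numerical inequality
$$|\mathcal{M}^1(\Lambda_+)(\mathbb{F}_q)| \,\geq\, \bigl|\mathcal{M}^1(\Lambda_-)(\mathbb{F}_q) \times_{Loc^1(\Lambda_-)(\mathbb{F}_q)} Loc^1(L)(\mathbb{F}_q)\bigr|.$$
In (4), the surjectivity $H^1(L) \twoheadrightarrow H^1(\Lambda_{g,k})$ ensures every rank $1$ local system on $\Lambda_{g,k}$ extends to $L$, so the right-hand side is bounded below by $|\mathcal{M}^1(\Lambda_{g,k})(\mathbb{F}_q)|$; the Casals--Zaslow polynomial count grows with $k$, so the inequality fails once $k < k'$, contradicting the existence of $L$. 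In (2), $\Lambda_{g,0}$ bounds a standard Lagrangian handlebody whose flag moduli has a comparatively small polynomial count, and even after multiplying by the extensions of local systems along a putative Maslov-vanishing cobordism $L$ one cannot reach $|\mathcal{M}^1(\Lambda_{g',k})(\mathbb{F}_q)|$, whose top $q$-degree is dominated by the $k$ Clifford summands.

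The principal obstacle will be the precise comparison of the Casals--Zaslow polynomial counts in $k$ used in (2) and (4), and verifying that the local weave moves in (1) genuinely correspond to Lagrangian $1$-handle cobordisms. Once these combinatorial and geometric inputs are extracted from \cite{CasalsZas}, the remaining sheaf-theoretic content reduces to a direct application of the full faithfulness of $\Phi_L$ established in Theorem \ref{main}.
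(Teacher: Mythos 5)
Your overall strategy for parts (1), (2) and (4) --- explicit weave-level handle attachments for existence, and $\mathbb{F}_q$-point counts of the Casals--Zaslow flag moduli combined with full faithfulness of $\Phi_L$ for non-existence --- is the same as the paper's. However, the quantitative input you rely on is stated backwards, and this breaks the argument. Each Clifford summand contributes a factor of $\Bbbk^\times\setminus\{1\}$ (i.e.\ $q-2$ points) to $\mathcal{M}_1$, whereas an unknot summand contributes $\Bbbk^\times$ (i.e.\ $q-1$ points), so the count \emph{decreases} as $k$ increases: $|\mathcal{M}_1(\Lambda_{g,k'})(\mathbb{F}_q)| < |\mathcal{M}_1(\Lambda_{g,k})(\mathbb{F}_q)|$ for $k<k'$. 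With your claimed monotonicity ("grows with $k$") the inequality forced by the embedding $\mathcal{M}_1(\Lambda_{g,k})\times_{H^1(\Lambda_{g,k};\Bbbk^\times)}H^1(L;\Bbbk^\times)\hookrightarrow \mathcal{M}_1(\Lambda_{g,k'})$ would be satisfied rather than violated, so no contradiction arises in (4). Likewise in (2) the operative fact is not a comparison of "top $q$-degrees" but the vanishing $\mathcal{M}_1(\Lambda_{g',k})(\mathbb{F}_2)=\emptyset$ for $k\geq 1$ (the factor $q-2$ is zero): hence $\Lambda_{g',k}$ admits no exact Lagrangian filling, while $\Lambda_{g,0}$ does, and concatenating that filling with a putative cobordism would produce one.

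Part (3) has a more serious gap: your argument is circular, since "take any Lagrangian cobordism $L_0$ relating $\Lambda_{g,k}$ to $\Lambda_{g,k'}$" assumes exactly the existence statement to be proved (note that for $k\geq 1$, $k'=0$ this is a cobordism from a non-fillable to a fillable Legendrian, which is far from automatic). Moreover, the proposed repair --- connect-summing with a closed exact Lagrangian torus in $X$ --- is impossible here, as the ambient manifold is $\mathbb{R}^6=T^*\mathbb{R}^3$ (more precisely its symplectization region), which contains no closed exact Lagrangians by Gromov's theorem. The missing idea is flexibility: there is a Lagrangian $2$-handle cobordism from $\Lambda_\text{Cliff}$ to a \emph{loose} Legendrian sphere, so $\Lambda_{g,k}$ ($k\geq1$) is cobordant to a loose Legendrian of genus $g-1$; one then builds a \emph{formal} Lagrangian cobordism from that loose Legendrian to $\Lambda_{g,k'}$ (using a formal Legendrian isotopy between the $\Lambda_{g,\bullet}$'s together with the genuine cobordisms of part (1)) and applies the Eliashberg--Murphy $h$-principle for Lagrangian caps to make it genuine. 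The $2$-handle in the first step is what kills two classes of $H^1(\Lambda_{g,k})$ and produces $\dim\mathrm{coker}(H^1(L)\to H^1(\Lambda_{g,k}))\geq 2$.
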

\begin{remark}
    We will see that Part~(2) is a direct corollary of either \cite{Rizchordexample} or \cite{TZ}.
\end{remark}

    Roughly speaking, the Legendrian $\Lambda_{g,k}$ is closer to being Lagrangian fillable when $k$ is smaller (in particular $\Lambda_{g,0}$ are the only Lagrangian fillable ones). We would expect that it is difficult to have a Lagrangian cobordism from $\Lambda_{g,k}$ to $\Lambda_{g,k'}$ if $k > k'$. Our theorem shows that, for $k > k'$, there are indeed obstructions for Lagrangian cobordisms to exist from $\Lambda_{g,k}$ to $\Lambda_{g,k'}$ assuming either (2)~$k = 0$ or (4)~$H^1(L) \rightarrow H^1(\Lambda_{g,k})$ is surjective. On the contrary, as long as we assume (3)~$k \geq 1$ and $H^1(L) \rightarrow H^1(\Lambda_{g,k})$ is not surjective, then we enter the world of flexibility and there are no obstructions for Lagrangian cobordisms (and $\dim\mathrm{coker}(H^1(L) \rightarrow H^1(\Lambda_{g,k}))$ can even be very small).

    In earlier works, we knew that the Euler characteristic of the Lagrangian is determined by Bennequin-Thurston numbers of the Legendrians \cite{EESnoniso}. When the Chekanov-Eliashberg dg algebra has a $\Bbbk$-augmentation (without loop space coefficients), then there are obstructions on $H^*(L)$ coming from the Cthulhu complexes \cite{Cthulhu}. Our result gives new examples where we have more precise characterization of the smooth cobordism types, in particular the homotopy type of inclusion $\Lambda_- \hookrightarrow L$. In general, it will be an interesting problem whether certain smooth cobordism type can be realized by an exact Lagrangian cobordism.

\subsection{Organization of the Paper}
    Section \ref{prelim} will be the background of the microlocal sheaf theory that we will need in this paper, and in particular Section \ref{sheafquan} will explain Nadler-Shende's construction of sheaf categories of Weinstein manifolds and related results, which is the key technique in our main theorem. Section \ref{mainthm} and \ref{conca-inv} cover the proofs of the main theorems, and Section \ref{compare1} and \ref{compare2} cover the comparison results Theorem \ref{comparegks} and Proposition \ref{comparejt}. In Section \ref{elementary} we study elementary cobordisms, and finally in Section \ref{application} we prove the results for Legendrian surfaces Theorem 
    \ref{2graphs}.

\subsection{Conventions}
    Geometric conventions: For a Weinstein domain $X$, $\partial_\infty X$ is its contact boundary. In particular, for $T^*M$, $T^{*,\infty}M$ is its contact boundary, and in the paper we will identify it with the unit cotangent bundle. $T^{*,\infty}_{\tau>0}(M \times \mathbb{R})$ is the subbundle of $T^{*,\infty}(M \times \mathbb{R})$ consisting of points so that the covector coordinate in $T^*\mathbb{R}$ is $\tau>0$. For a closed submanifold $N \subset M$, $\nu^{*,\infty}_NM$ is the unit conormal bundle. For an open subset $U \subset M$ with subanalytic boundary, $\nu^{*,\infty}_{U,+/-}M$ is the outward/inward unit conormal bundle.

    As is already mentioned at the beginning, all Lagrangians and Legendrians in this paper are equipped with Maslov data. We say that a Lagrangian cobordism $L$ is from $\Lambda_-$ to $\Lambda_+$ if $\Lambda_+$ is at the convex end and $\Lambda_-$ is at the concave end.

    Categorical conventions: All categories in this paper are dg categories, and all functors will be functors in dg categories. $Sh_{-}, \mu Sh_{-}$ are the dg categories consisting of all possibly unbounded complexes of sheaves with prescribed (isotropic) singular support, $Sh^c_{-}, \mu Sh^c_{-}$ are the dg subcategories of compact objects, and $Sh^b_{-}, \mu Sh^b_{-}$ are the dg subcategories of proper objects. They are all localized along acyclic objects.

\subsection*{Acknowledgements}
    I would like to thank my advisors Emmy Murphy and Eric Zaslow for plenty of helpful discussions and comments, in particular Emmy Murphy for explaining the general version of Lagrangian caps used in Theorem \ref{2graphs} Part~(3) and Eric Zaslow for helpful discussions on Section \ref{proposal}. I would like to thank Vivek Shende for explaining some details in his work and essentially explaining the strategy of the proof of Theorem \ref{comparegks}, and to thank anonymous referees for providing helpful comments and pointing out the mistakes in Lemma \ref{basechange}. I am also grateful to Roger Casals for helpful discussions and comments on Section \ref{proposal} and Section \ref{elementary}. Finally I am grateful to Honghao Gao and Yuichi Ike for helpful comments.

\section{Preliminaries in Sheaf Theory}\label{prelim}

\subsection{Singular Supports}\label{prelim-ss}
    We briefly review results in microlocal sheaf theory that we are going to use in this paper. For the theory of category of sheaves with unbounded cohomologies, one can refer to \cite{Unbound}.

\begin{definition}
    Let $\underline{Sh}(M)$ be the dg category of sheaves over $\Bbbk$, that consists of complexes of sheaves over $\Bbbk$. Then $Sh(M)$ is the dg localization of $\underline{Sh}(M)$ along all acyclic objects (with possibly unbounded cohomologies).
\end{definition}

\begin{example}
    We denote by $\Bbbk_M$ the constant sheaf on $M$. For a locally closed subset $i_V: V \hookrightarrow M$, abusing notations, we will write
    $$\Bbbk_V = i_{V !}\Bbbk_V \in Sh^b(M).$$
    In particular, $\Bbbk_V \in Sh^b(M)$ will have stalk $\Bbbk$ for $x \in V$ and stalk $0$ for $x \notin V$. Note that when $V \hookrightarrow M$ is a closed subset, we can also write $\Bbbk_V = i_{V *}\Bbbk_V$.
\end{example}

    We define the singular support of a sheaf, which is the starting point of microlocal sheaf theory. For the microlocal theory of sheaves with unbounded cohomologies, one may refer to \cite{MicrolocalInfty} or \cite[Section 2]{JinTreu}.

\begin{definition}
    Let $\mathscr{F} \in Sh(M)$. Then its singular support $SS(\mathscr{F})$ is the closure of the set of points $(x, \xi) \in T^*M$ such that there exists a smooth function $\varphi \in C^1(M)$, $\varphi(x) = 0, d\varphi(x) = \xi$ and
    $$\Gamma_{\varphi^{-1}([0,+\infty))}(\mathscr{F})_x \neq 0.$$
    The singular support at infinity is $SS^\infty(\mathscr{F}) = SS(\mathscr{F}) \cap T^{*,\infty}M$.

    For $\widehat \Lambda \subset T^*M$ any conical subset (resp.~$\Lambda \subset T^{*,\infty}M$ any subset), let $Sh_{\widehat \Lambda}(M) \subset Sh(M)$ (resp.~$Sh_\Lambda(M) \subset Sh(M)$) be the full subcategory of sheaves such that $SS(\mathscr{F}) \subset \widehat\Lambda$ (resp.~$SS^\infty(\mathscr{F}) \subset \Lambda$).
\end{definition}

\begin{example}
    Let $\mathscr{F} = \Bbbk_{\mathbb{R}^n \times [0,+\infty)}$. Then $SS(\mathscr{F}) = \mathbb{R}^n \times \{(x, \xi) | x \geq 0, \xi = 0 \text{ or } x = 0, \xi \geq 0\}$, $SS^\infty(\mathscr{F}) = \nu^{*,\infty}_{\mathbb{R}^n \times \mathbb{R}_{>0},-}\mathbb{R}^{n+1} = \{(x_1, ..., x_n, 0, 0,..., 0, 1)\}$, which is the inward conormal bundle of $\mathbb{R}^n \times \mathbb{R}_{>0}$.

    Let $\mathscr{F} = \Bbbk_{\mathbb{R}^n \times (0,+\infty)}$. Then $SS(\mathscr{F}) = \mathbb{R}^n \times \{(x, \xi) | x \geq 0, \xi = 0 \text{ or } x = 0, \xi \leq 0\}$, $SS^\infty(\mathscr{F}) = \nu^{*,\infty}_{\mathbb{R}^n \times \mathbb{R}_{>0},+}\mathbb{R}^{n+1} = \{(x_1, ..., x_n, 0, 0,..., 0, -1)\}$, which is the outward conormal bundle of $\mathbb{R}^n \times \mathbb{R}_{>0}$.
\end{example}

\begin{figure}
  \centering
  \includegraphics[width=0.4\textwidth]{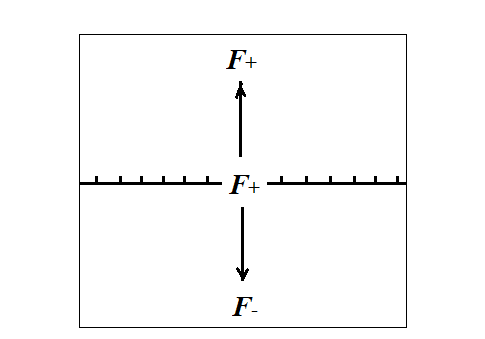}\\
  \caption{The singular support of a sheaf and the combinatoric description.}\label{singularsupport}
\end{figure}

    We have the following non-characteristic deformation lemma, that will allow us to write down explicit combinatoric models for a large class of sheaves, given the singular support condition.

\begin{proposition}[non-characteristic deformation lemma, {\cite[Proposition 2.7.2]{KS}}]\label{morselemma}
    Let $\mathscr{F} \in Sh(M)$ and $\{U_t\}_{t \in \mathbb{R}}$ be a family of open subsets and $Z_t = \bigcap_{t > s}\overline{U_t \backslash U_s}$. Suppose that
    \begin{enumerate}
      \item $U_t = \bigcup_{s < t} U_s$, for $-\infty < t < +\infty$;
      \item $\overline{U_t \backslash U_s} \cap \mathrm{supp}(\mathscr{F})$ is compact, for $-\infty < s < t < +\infty$;
      \item $\Gamma_{M \backslash U_t}(\mathscr{F})_x = 0$, for $x \in Z_s \backslash U_t, \, -\infty < s \leq t < +\infty$.
    \end{enumerate}
    Then for any $t \in \mathbb{R}$ we have
    $$\Gamma\bigg(\bigcup_{s \in \mathbb{R}} U_s, \mathscr{F}\bigg) \xrightarrow{\sim} \Gamma(U_t, \mathscr{F}).$$
\end{proposition}
\begin{example}[\cite{STZ}*{Section 3.3}]\label{combin-model}
    Suppose $\Lambda = \nu^{*,\infty}_{\mathbb{R}^n \times \mathbb{R}_{>0},-}\mathbb{R}^{n+1} \subset T^{*,\infty}\mathbb{R}^{n+1}$ is the inward conormal bundle at infinity and $\mathscr{F} \in Sh_\Lambda(\mathbb{R}^{n+1})$. Then by non-characteristic deformation lemma, $\mathscr{F}|_{\mathbb{R}^n \times \{0\}}$, $\mathscr{F}|_{\mathbb{R}^n \times (0,+\infty)}$ and $\mathscr{F}|_{\mathbb{R}^n \times (-\infty,0)}$ are locally constant sheaves, and
    $$\Gamma(\mathbb{R}^n \times \{0\}, \mathscr{F}) \simeq \Gamma(\mathbb{R}^{n+1}, \mathscr{F}) \simeq \Gamma(\mathbb{R}^n \times [0, +\infty), \mathscr{F}).$$
    Suppose that
    $$\mathscr{F}|_{\mathbb{R}^n \times [0,+\infty)} = (F_+)_{\mathbb{R}^n \times [0,+\infty)}, \,\,\, \mathscr{F}|_{\mathbb{R}^n \times (-\infty,0)} = (F_-)_{\mathbb{R}^n \times (-\infty,0)}.$$
    Then $\mathscr{F}$ is determined by the diagram (Figure \ref{singularsupport})
    \[\xymatrix{
    F_- & F_+ \ar[l] \ar[r]^\sim & F_+
    }\]
\end{example}

    Here are some singular support estimates that we are going to use. Let $A, B \subset T^*M$. Then define $(x, \xi) \in A \,\widehat +\, B$ iff there exists $(a_n, \alpha_n) \in A, (b_n, \beta_n) \in B$ such that
    $$a_n, b_n \rightarrow x, \, \alpha_n + \beta_n \rightarrow \xi, \, |a_n - b_n||\alpha_n| \rightarrow 0.$$
    Let $i: M \rightarrow N$ be a closed embedding. Then for $A \subset T^*N$, define $(x, \xi) \in i^{\#}(A)$ iff there exists $(y_n, \eta_n, x_n, 0) \in A \times T^*M$ such that
    $$x_n, y_n \rightarrow x, \, i^*\eta_n \rightarrow \xi, \, |x_n - y_n||\eta_n| \rightarrow 0.$$

\begin{proposition}[\cite{KS}*{Theorem 6.3.1}]\label{sspushforward}
    Let $j: U \hookrightarrow N$ be an open embedding, $\mathscr{F} \in Sh(U)$. Then
    $$SS(j_*\mathscr{F}) \subset SS(\mathscr{F}) \,\widehat +\, \nu_{U,-}^*N,$$
    where $\nu_{U,-}^*N$ is the inward conormal bundle of $U \subset N$.
\end{proposition}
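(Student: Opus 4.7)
The plan is to work locally near a point $x \in N$ and to exploit the microlocal Morse lemma (Proposition \ref{morselemma}) together with a non-characteristic deformation argument. First I would reduce to boundary points: on $U$ we have $(j_*\mathscr{F})|_U = \mathscr{F}$, so the inclusion $SS(j_*\mathscr{F}) \subset SS(\mathscr{F})$ is immediate over $T^*U$, while over $N \setminus \overline{U}$ the sheaf $j_*\mathscr{F}$ vanishes identically. It therefore suffices to verify, for each $x \in \partial U$ and each $(x, \xi) \notin SS(\mathscr{F}) \,\widehat +\, \nu^*_{U,-}N$, that $(x, \xi) \notin SS(j_*\mathscr{F})$.

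To do this, pick a test function $\varphi \in C^1(N)$ with $\varphi(x) = 0$ and $d\varphi(x) = \xi$, and aim to show
$$\Gamma_{\{\varphi \geq 0\}}(j_*\mathscr{F})_x \simeq 0.$$
Using $(j_*\mathscr{F})(V) = \mathscr{F}(V \cap U)$, this stalk is a filtered colimit of relative sections of $\mathscr{F}$ over shrinking pairs $V \cap U \supset (V \setminus \{\varphi \geq 0\}) \cap U$. The strategy is to slide the level hypersurface $\{\varphi = 0\}$ through a one-parameter family $\varphi_t = \varphi + t\psi$, chosen so that at $x$ the differential $d\psi$ lies in the inward conormal direction of $U$, and to apply microlocal Morse to each intermediate level set. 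The assumption on $(x, \xi)$ is precisely what one needs in order that, after shrinking $V$, no covector $d\varphi_t(y)$ at an interior point $y \in V \cap U$ along the swept region lies in $SS(\mathscr{F})$.

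The most delicate step is translating the set-theoretic definition of $\widehat +$ into a uniform non-characteristic estimate. Concretely, one must rule out sequences $(a_n, \alpha_n) \in SS(\mathscr{F})$ and $(b_n, \beta_n) \in \nu^*_{U,-}N$ with $a_n, b_n \to x$, $\alpha_n + \beta_n \to \xi$, and $|a_n - b_n||\alpha_n| \to 0$, which is exactly what the $\widehat +$ convention forbids. The subtlety here is that the covectors $\alpha_n \in SS(\mathscr{F})$ may blow up in norm as $a_n$ approaches $\partial U$ from inside; the quantity $|a_n - b_n||\alpha_n|$ precisely measures this blow-up against the conormal scale, and is what makes the deformation argument robust. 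A compactness argument on the relevant conic bundles then produces the desired uniform non-characteristic bound on a small neighborhood of $x$.

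I expect the main obstacle to be making the deformation step fully rigorous: showing that microlocal Morse applies uniformly along the family $\varphi_t$ in the presence of a moving boundary $\partial U$, and verifying that the argument passes through the dg localization of $\underline{Sh}$ to the unbounded category $Sh$. The latter reduces to the compatibility of $SS$ with filtered colimits, which is available from \cite{MicrolocalInfty}; the former is exactly the content of the non-characteristic deformation machinery in \cite{KS} that underlies the cited Theorem 6.3.1, and any self-contained proof essentially has to reconstruct it.
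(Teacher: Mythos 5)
The paper does not prove this statement at all: it is imported verbatim from \cite{KS}*{Theorem 6.3.1} and used as a black box, so there is no internal argument to compare yours against. Judged on its own terms, your proposal is a reasonable outline of the standard strategy (localize at a boundary point, test with $\Gamma_{\{\varphi\ge 0\}}(j_*\mathscr{F})_x$, sweep the level hypersurface, and invoke non-characteristic deformation), and the reduction away from $\partial U$ is correct. But it is a plan rather than a proof, and the step you defer is exactly the entire content of the theorem.

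Two concrete gaps. First, the microlocal Morse lemma in the form you want to apply it (Proposition \ref{morselemma}) requires the function to be proper, and the sections of $j_*\mathscr{F}$ near $x\in\partial U$ are computed as $\Gamma(V\cap U,\mathscr{F})$ on the \emph{open, non-compact} set $V\cap U$; the sweeping family $\varphi_t$ is never proper on $V\cap U$, since its sublevel sets accumulate on $\partial U$. This is precisely why \cite{KS} do not argue this way: their proof runs through the non-characteristic deformation lemma in its general form (with hypotheses on $\overline{U_t}\setminus U_s$ being compact) together with the refined microlocal cut-off lemma and $\gamma$-topology convolutions of Chapter 6, which is the machinery that controls the behavior of covectors of $SS(\mathscr{F})$ blowing up as one approaches $\partial U$. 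Second, the passage from ``$(x,\xi)\notin SS(\mathscr{F})\,\widehat+\,\nu^*_{U,-}N$'' to a \emph{uniform} non-characteristic bound on a neighborhood is asserted via ``a compactness argument on the relevant conic bundles,'' but $SS(\mathscr{F})$ is only closed and conic in $T^*U$, and its limit set over $\partial U$ need not be a nice conic set; the $\widehat+$ operation is defined exactly to encode these limits, and extracting a usable open non-characteristic condition from its negation is the technical heart of the proof, not a routine compactness step. So the proposal correctly locates the difficulty but does not close it; as written it would not compile into a proof without essentially reconstructing \cite{KS}*{Sections 6.1--6.3}, which you yourself acknowledge.
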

\begin{proposition}[\cite{KS}*{Corollary 6.4.4}]\label{sspullback}
    Let $i: M \rightarrow N$ be a closed embedding, $\mathscr{F} \in Sh(N)$. Then
    $$SS(i^{-1}\mathscr{F}) \subset i^{\#}SS(\mathscr{F}).$$
\end{proposition}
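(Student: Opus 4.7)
The plan is to argue by contrapositive: given $(x_0, \xi_0) \in T^*M$ with $(x_0, \xi_0) \notin i^{\#}SS(\mathscr{F})$, deduce $(x_0, \xi_0) \notin SS(i^{-1}\mathscr{F})$.

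First I would factor the closed immersion $i: M \hookrightarrow N$ through a tubular neighborhood, writing $i = j \circ s$ with $j: T \hookrightarrow N$ an open embedding of a tubular neighborhood and $s: M \hookrightarrow T$ the zero section of the normal bundle $\pi: T \to M$. Since open embeddings preserve singular support exactly, the problem reduces to analyzing $s^{-1}$ for $\mathscr{G} := \mathscr{F}|_T$. In local coordinates near $x_0$, take $T = \mathbb{R}^k \times \mathbb{R}^{n-k}$ and $M = \mathbb{R}^k \times \{0\}$, so that covectors $\xi \in T^*_x M$ canonically embed as $(\xi, 0) \in T^*_{(x,0)} T$. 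The hypothesis $(x_0, \xi_0) \notin i^{\#}SS(\mathscr{F})$ then unpacks to a quantitative estimate: there exist a conic neighborhood $W$ of $(x_0, \xi_0)$ in $T^*M$, a neighborhood $U$ of $x_0$ in $T$, and $\epsilon > 0$ so that no $(y,\eta) \in SS(\mathscr{G}) \cap T^*U$ and $(x, \xi) \in W$ can jointly satisfy $|\eta - \xi| < \epsilon$ and $|x - y||\eta| < \epsilon$.

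Next, to show $(x_0, \xi_0) \notin SS(s^{-1}\mathscr{G})$, I would take any $\varphi \in C^1(M)$ with $\varphi(x_0) = 0$, $d\varphi(x_0) = \xi_0$, and verify $\Gamma_{\{\varphi \geq 0\}}(s^{-1}\mathscr{G})_{x_0} = 0$. Extending via $\tilde\varphi := \varphi \circ \pi$ on $T$, a base change-type comparison between $\Gamma_{\{\tilde\varphi \geq 0\}}(\mathscr{G})$ restricted to $M$ and $\Gamma_{\{\varphi \geq 0\}}(s^{-1}\mathscr{G})$ reduces the target vanishing to a microlocal Morse-type statement for $\mathscr{G}$ on $T$. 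For that I would invoke the microlocal propagation lemma underlying Proposition \ref{morselemma}: the quantitative $i^{\#}$ estimate says precisely that $SS(\mathscr{G})$ is disjoint from the graph of $d\tilde\varphi$ in the $\widehat{+}$ sense near $x_0$, which is the input needed to force the stalk to vanish.

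The main obstacle is this last step: the naive differential $d\tilde\varphi = (d\varphi, 0)$ need not be non-characteristic for $\mathscr{G}$ along $M$, so Proposition \ref{morselemma} does not apply verbatim (for instance when $SS(\mathscr{G})$ contains covectors of the form $(\xi, 0)$ with $\xi$ near $\xi_0$, as happens already for $\mathscr{G} = \Bbbk_{\{x_1 \geq 0\}}$). The role of the $\widehat{+}$ formalism is to absorb this degeneracy: the slack from allowing $(x_n, \xi_n) \neq (y_n, \eta_n)$ with $|x_n - y_n||\eta_n| \to 0$ corresponds to perturbing $\tilde\varphi$ by a function with small $C^0$ norm but potentially large transverse derivative, reducing matters to a genuinely non-characteristic situation. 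Translating this perturbation formalism into a clean microlocal cut-off argument is the technical heart of \cite{KS}*{Corollary 6.4.4}.
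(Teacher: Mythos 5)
The paper offers no proof of this statement; it is quoted verbatim from \cite{KS}*{Corollary 6.4.4}, so the only meaningful comparison is with the Kashiwara--Schapira argument. Your outline correctly locates the difficulty (the non-characteristic case is easy; the whole content of the $\widehat{+}$ is to handle covectors of $SS(\mathscr{F})$ with nonzero conormal component at points near $M$), but as written it has two genuine gaps. First, the ``base change-type comparison'' goes the wrong way: the natural morphism is $i^{-1}R\Gamma_{\{\tilde\varphi\geq 0\}}(\mathscr{G}) \rightarrow R\Gamma_{\{\varphi\geq 0\}}(i^{-1}\mathscr{G})$ (apply $i^{-1}$ to $R\mathscr{H}om(\Bbbk_{\{\tilde\varphi\geq0\}},\mathscr{G})$), and it is not an isomorphism in general; vanishing of the source therefore does not give the vanishing of the target that you need. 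Making this comparison work amounts to computing sections of $\mathscr{G}|_M$ over sets in $M$ via sections of $\mathscr{G}$ over shrinking tubes in $N$, which is itself a propagation statement in the conormal directions --- i.e.\ it is not a reduction of the problem but a restatement of it. Second, the step you yourself flag as ``the technical heart'' (the microlocal cut-off absorbing the characteristic directions) is exactly the proof, and it is not carried out; deferring it to \cite{KS} means the proposal is an annotated citation rather than a proof. A minor additional caution: your quantitative unpacking of $(x_0,\xi_0)\notin i^{\#}SS(\mathscr{F})$ follows the paper's transcription of $i^{\#}$, in which the auxiliary covectors $\xi_n$ range over $T^*M$; in \cite{KS} they range over the conormal $T^*_MN$, and the two conditions are not interchangeable in the estimates you would need.

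For what it is worth, the standard route in \cite{KS} avoids any direct Morse-theoretic verification of $SS(i^{-1}\mathscr{F})$: since $\mathscr{F}_M \simeq i_*i^{-1}\mathscr{F}$, and the singular support of a pushforward along a closed embedding is computed \emph{exactly} ($SS(i_*\mathscr{H}) = \varpi^{-1}SS(\mathscr{H})$ for $\varpi: T^*N|_M \rightarrow T^*M$), one gets $SS(i^{-1}\mathscr{F}) = \varpi(SS(\mathscr{F}_M))$ on the nose, and then the bifunctor estimate $SS(\mathscr{F}\otimes\Bbbk_M) \subset SS(\mathscr{F})\,\widehat{+}\,T^*_MN$ finishes the proof. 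All the analytic work is thereby concentrated in the tensor-product estimate (proved once via the microlocal cut-off lemma), rather than redone for each test function $\varphi$. If you want a self-contained argument, I would recommend restructuring along those lines rather than trying to salvage the stalkwise comparison.
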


    Kashiwara-Schapira proved that the singular support of a sheaf is always a coisotropic conical subset in $T^*M$. When the singular support of a sheaf is a subanalytic Lagrangian subset, then it is called a weakly constructible sheaf \cite[Definition 8.4.3]{KS}.

    In particular, for a weakly constructible sheaf $\mathscr{F}$, when $\epsilon > 0$ is sufficiently small, the outward conormal bundle $\nu^{*,\infty}_{\partial B_\epsilon(x),+}M$ will be disjoint from the subanalytic Legendrian $SS^\infty(\mathscr{F})$, and thus by microlocal Morse lemma we have \cite[Lemma 8.4.7]{KS}
    $$\mathscr{F}_x \simeq
    \Gamma(\overline{B_\epsilon(x)}, \mathscr{F}) \simeq \Gamma(B_\epsilon(x), \mathscr{F}).$$

\subsection{Microlocalization and $\mu Sh$}\label{microsection}
    We review the definition and properties of microlocalization and the sheaf of categories $\mu Sh$, which has been introduced and studied in \cite{KS}*{Section 6}, \cite{Gui}*{Section 6} or \cite{NadWrapped}*{Section 3.4}. This is a category that we will frequently use. Here we follow the definition in \cite{NadShen}*{Section 5}.

\begin{definition}\label{musheaf}
    Let $\widehat\Lambda \subset T^*M$ be a conical subset. Then define a presheaf of dg categories on $T^*M$ supported on $\widehat \Lambda$ to be
    $$\mu Sh^{\text{pre}}_{\widehat\Lambda}: \, \widehat \Omega \mapsto Sh_{\widehat \Lambda\, \cup \,T^*M \backslash \widehat \Omega}(M)/Sh_{T^*M \backslash \widehat \Omega}(M),$$
    The sheafification of $\mu Sh^{\text{pre}}_{\widehat\Lambda}$ is $\mu Sh_{\widehat\Lambda}$. In particular, we write $\mu Sh = \mu Sh_{T^*M}$ for the sheaf of categories on $T^*M$.

    Let $Sh_{(\widehat\Lambda)}(M)$ be the subcategory of sheaves $\mathscr{F}$ such that there exists some neighbourhood $\widehat \Omega$ of $\widehat\Lambda$ satisfying $SS(\mathscr{F}) \cap \widehat \Omega \subset \widehat\Lambda$. For $\mathscr{F, G} \in Sh_{(\widehat\Lambda)}(M)$, let the sheaf of homomorphisms in the sheaf of categories $\mu Sh_{\widehat\Lambda}$ be
    $$\mu hom(\mathscr{F}, \mathscr{G})|_{\widehat\Lambda}: \, \widehat\Omega \mapsto Hom_{\mu Sh_{\widehat\Lambda}(\widehat \Omega)}(\mathscr{F, G}).$$
    Write $\mu hom(\mathscr{F}, \mathscr{G})$ to be the sheaf of homomorphisms in $\mu Sh$.

    Let $\Lambda \subset T^{*,\infty}M$ be a subset where $T^{*,\infty}M$ is identified with the unit cotangent bundle. Then $\mu Sh_\Lambda$ is defined by $\mu Sh_\Lambda = \mu Sh_{\Lambda \times \mathbb{R}_{>0}}|_\Lambda$.
\end{definition}
\begin{remark}
    We define the sheafification in the (large) category of dg categories whose morphisms are exact functors. When $\widehat\Lambda$ is a conical subanalytic Lagrangian, the sheafification takes value in the (large) category of presentable dg categories whose morphisms are colimit preserving functors \cite{NadShen}*{Remark 6.1}.
\end{remark}

    Denote by $m_\Lambda$ the natural quotient functor on the sheaf of categories, which, on the level of global sections, induces
    $$m_\Lambda: \, Sh_\Lambda(M) \rightarrow \mu Sh_\Lambda(\Lambda).$$
    We call $m_\Lambda$ the microlocalization functor.

    The following lemma immediately follows from the identity $\Gamma(T^*M, \mu hom(\mathscr{F, G})) = Hom(\mathscr{F}, \mathscr{G})$ and the fact that $\mathrm{supp}(\mu hom(\mathscr{F, G})) \subset SS(\mathscr{F}) \cap SS(\mathscr{G})$ \cite{KS}*{Corollary 5.4.10}.

\begin{lemma}[\cite{NadWrapped}*{Remark 3.18}]\label{sheafmusheaf}
    Let $\widehat\Lambda \subset T^*M$ be a conical subanalytic Lagrangian. Then
    $$Sh_{M \cup \widehat\Lambda}(M) \xrightarrow{\sim} \mu Sh_{M \cup \widehat\Lambda}(M \cup \widehat\Lambda).$$
\end{lemma}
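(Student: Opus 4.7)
The plan is to realize the stated map as the natural composition
$$Sh_{\widehat\Lambda}(M) \xrightarrow{\sim} \mu Sh^{\mathrm{pre}}_{T^*M}(M \cup \widehat\Lambda) \longrightarrow \mu Sh_{T^*M}(M \cup \widehat\Lambda),$$
where the first arrow is an equivalence because the ``denominator'' category $Sh_{T^*M \setminus (M \cup \widehat\Lambda)}(M)$ in the quotient defining the presheaf is trivial: any sheaf with singular support disjoint from the zero section $M$ has empty support, hence vanishes. The second arrow is the sheafification map, so the task reduces to verifying that it is an equivalence on the particular open set $M \cup \widehat\Lambda \subset T^*M$.

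For full faithfulness I would invoke precisely the input recorded just before the lemma. For $\mathscr{F}, \mathscr{G} \in Sh_{\widehat\Lambda}(M)$, the support estimate $\mathrm{supp}(\mu hom(\mathscr{F}, \mathscr{G})) \subset SS(\mathscr{F}) \cap SS(\mathscr{G}) \subset M \cup \widehat\Lambda$ from \cite{KS}*{Corollary 5.4.10} forces
$$Hom_{\mu Sh(M \cup \widehat\Lambda)}(\mathscr{F}, \mathscr{G}) = \Gamma(M \cup \widehat\Lambda, \mu hom(\mathscr{F}, \mathscr{G})) = \Gamma(T^*M, \mu hom(\mathscr{F}, \mathscr{G})) = Hom_{Sh(M)}(\mathscr{F}, \mathscr{G}),$$
and full faithfulness follows at once.

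For essential surjectivity I would argue that the sheafification map is already an equivalence on every open $\widehat U \subset T^*M$ that contains the zero section $M$. The triviality-of-quotient observation above identifies $\mu Sh^{\mathrm{pre}}_{T^*M}(\widehat U)$ with $Sh_{(\widehat U)}(M)$, and this presheaf is already a sheaf on the subsite of such $\widehat U$ by the standard gluing property for sheaves on $M$ subject to singular-support constraints, which in turn rests on Kashiwara--Schapira's microlocal cut-off and reconstruction. Applying the resulting equivalence to $\widehat U = M \cup \widehat\Lambda$ itself produces the required lift of any section to an honest object of $Sh_{\widehat\Lambda}(M)$, and full faithfulness already proved guarantees that the lift recovers the original section.

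The main obstacle is exactly this microlocal descent statement, namely that $\mu Sh^{\mathrm{pre}}_{T^*M}$ is a sheaf when restricted to opens containing $M$. This is a packaging of the fact that a sheaf on $M$ is reconstructed from its microlocal data on a cover of $T^*M$ with the appropriate singular-support bookkeeping, and is precisely the content referenced by the Nadler--Wrapped citation in the statement; modulo this standard microlocal input, the argument is complete.
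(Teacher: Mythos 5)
Your first two steps are correct and match the paper's own (very brief) justification. Observing that $Sh_{T^*M \setminus (M \cup \widehat\Lambda)}(M) = 0$ because $\mathrm{supp}(\mathscr{F}) = SS(\mathscr{F}) \cap M$, and that conicality of $SS(\mathscr{F})$ forces $Sh_{(M \cup \widehat\Lambda)}(M) = Sh_{\widehat\Lambda}(M)$, correctly identifies the presheaf sections over $M \cup \widehat\Lambda$ with $Sh_{\widehat\Lambda}(M)$. Your full-faithfulness argument via $\mathrm{supp}\big(\mu hom(\mathscr{F},\mathscr{G})\big) \subset SS(\mathscr{F}) \cap SS(\mathscr{G}) \subset M \cup \widehat\Lambda$ and $\Gamma(T^*M, \mu hom) = Hom_{Sh(M)}$ is precisely what the paper records in the sentence preceding the lemma.

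The gap is in essential surjectivity. You argue that $\mu Sh^{\mathrm{pre}}_{T^*M}$ is already a sheaf on the subsite of opens $\widehat U \subset T^*M$ containing the zero section, and then you ``apply the resulting equivalence to $\widehat U = M \cup \widehat\Lambda$.'' But $M \cup \widehat\Lambda$ is a \emph{closed}, conical, nowhere-dense subset of $T^*M$, not an open one, so it does not belong to the subsite in question and the equivalence cannot be applied to it directly. Moreover, the sheaf $\mu Sh_{M \cup \widehat\Lambda}$ is sheafified on the subspace $M \cup \widehat\Lambda$, whose opens are of the form $(M \cup \widehat\Lambda) \cap V$ for $V$ open in $T^*M$; such opens generally do \emph{not} contain the whole zero section (take $V$ disjoint from $M$), so the triviality-of-denominator observation fails on most members of a cover and your descent claim does not transfer. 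To make the argument go through one must either (i) express sections over the closed set as a colimit $\mathrm{colim}_{\widehat U \supset M \cup \widehat\Lambda}\,\mu Sh(\widehat U)$ over open neighborhoods and justify that this colimit formula is valid in the setting of sheaves of dg categories (which is not automatic), or (ii) prove the genuine quantization statement that compatible microlocal data on a cover of $M \cup \widehat\Lambda$ glues to an actual sheaf on $M$ with singular support in $M \cup \widehat\Lambda$ --- this is exactly the nontrivial content that the paper defers to Nadler's Remark~3.18 and that you correctly identify as the ``main obstacle,'' but the argument you offer does not actually discharge it.
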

\begin{remark}
    Note that using the invariance of $\mu Sh$ under contact transformations \cite[Section 7.2]{KS} and \cite{NadShen}*{Lemma 6.3}, the right hand side only depends on the germ of $M \cup \widehat\Lambda$, and can be viewed as a sheaf of categories either in $M \cup \widehat\Lambda \subset T^*M$ or in some $T^{*,\infty}N$ through a Legendrian embedding $M \cup \widehat\Lambda \hookrightarrow T^{*,\infty}N$ (see also \cite{NadShen}*{Remark 8.25}).
\end{remark}

\begin{theorem}[\cite{Gui}*{Proposition 6.6 \& Lemma 6.7}, \cite{NadShen}*{Corollary 5.4}]\label{microstalk}
    Let $\widehat\Lambda \subset T^*M$ be a conical subanalytic Lagrangian. For a smooth point $p = (x, \xi) \in \Lambda \subset T^{*,\infty}M$, the stalk $\mu Sh_{\Lambda,p} \simeq \mathrm{Mod}(\Bbbk)$.
\end{theorem}

\begin{theorem}[Guillermou \cite{Gui}*{Theorem 11.5}]\label{Gui}
    Let $\Lambda \subset T^{*,\infty}M$ be a smooth Legendrian submanifold. Suppose the Maslov class $\mu(\Lambda) = 0$ and $\Lambda$ is relative spin, then as sheaves of categories
    $$\mu Sh_\Lambda \xrightarrow{\sim} Loc_\Lambda.$$
\end{theorem}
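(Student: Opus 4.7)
The plan is to promote the stalk computation $\mu Sh_{\Lambda,p} \simeq \mathrm{Mod}(\Bbbk)$ to a global equivalence of sheaves of categories by exhibiting $\mu Sh_\Lambda$ as a gerbe on $\Lambda$ whose classifying class is captured by the Maslov and relative spin data, and then trivializing this class under the stated hypotheses. I would proceed in three steps: produce local trivializations, compute the \v{C}ech cocycle of the resulting transition autoequivalences, and trivialize the cocycle using a global Maslov potential and a relative spin structure.

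For the local trivialization, I would use the contact Darboux/Weinstein neighborhood theorem to find, for each $p \in \Lambda$, a contact neighborhood $U \subset T^{*,\infty}M$ together with a contactomorphism carrying $\Lambda \cap U$ onto an open subset of the zero section of some $T^{*,\infty}N$. By the contactomorphism invariance of $\mu Sh$ (Kashiwara--Schapira and Nadler--Shende, cited in the remark after Lemma \ref{sheafmusheaf}), it suffices to trivialize $\mu Sh$ along the zero section. There the microlocal Morse lemma (Proposition \ref{morselemma}) forces any object of $\mu Sh_N$ with microsupport confined near $N$ to be locally constant along $N$, yielding $\mu Sh_N|_N \simeq Loc_N$ essentially tautologically.

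For the cocycle analysis, on an overlap $U_{ij}$ the two local equivalences differ by a Morita autoequivalence of $Loc$ over $\Lambda \cap U_{ij}$. Such autoequivalences on a connected base are generated by cohomological shifts and by twists by invertible local systems; the shift part records the discrepancy between two Lagrangian Gauss maps into $U(n)/O(n)$, and the twist part records the discrepancy of orientation data. Assembling these transitions gives a \v{C}ech $1$-cocycle whose shift component represents $\mu(\Lambda) \in H^1(\Lambda; \mathbb{Z})$ and whose twist component represents the relative spin obstruction. Under $\mu(\Lambda) = 0$ and relative spin, a global Maslov potential and a relative spin structure provide explicit primitives; renormalizing the local equivalences by these primitives produces equivalences that agree on overlaps, and descent yields the global comparison $m_\Lambda: \mu Sh_\Lambda \xrightarrow{\sim} Loc_\Lambda$.

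The main obstacle is the higher coherence of the descent data: one must ensure the \v{C}ech cocycle agrees not just up to $1$-morphisms but up to a coherent tower of higher homotopies, so that the renormalized local equivalences glue into an honest morphism of sheaves of $\infty$-categories rather than merely a compatible family at the level of homotopy categories. Guillermou's strategy circumvents naive patching by building a preferred model via Kashiwara--Schapira's integral transform kernel associated to a contact isotopy straightening $\Lambda$ onto a zero section, so that the equivalence is produced globally from a single functor; only the first-order Maslov and spin data then need to be verified by hand, with no higher obstruction to manage. I would follow this route, aiming to package the cocycle calculation above as the obstruction theoretic input that decides whether such a straightening kernel exists.
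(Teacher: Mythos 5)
The paper does not prove this theorem; it cites Guillermou and Nadler--Shende and relies on their constructions. Your steps 1--3 do capture the correct strategy those references use: show $\mu Sh_\Lambda$ is a locally constant stack on $\Lambda$ with stalk $\mathrm{Mod}(\Bbbk)$ (via local contact transformations), observe that such a stack is classified by a map $\Lambda \to B\mathrm{Pic}(\Bbbk) \simeq B\mathbb{Z} \times B^2\Bbbk^\times$ whose two components are the Maslov class in $H^1(\Lambda; \mathbb{Z})$ and the spin/orientation obstruction in $H^2(\Lambda; \Bbbk^\times)$, and conclude that a global Maslov potential and a relative spin structure supply the trivialization.

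The genuine gap is in your final paragraph. You claim the higher coherence problem is solved because ``Guillermou's strategy circumvents naive patching by building a preferred model via Kashiwara--Schapira's integral transform kernel associated to a contact isotopy straightening $\Lambda$ onto a zero section, so that the equivalence is produced globally from a single functor.'' No such global straightening exists: a contactomorphism carrying a general closed Legendrian $\Lambda$ onto a standard model such as a conormal or zero-section would force $\Lambda$ to be unknotted, which is of course false in general. Kashiwara--Schapira's kernel theory provides only \emph{local} straightenings, one on each chart, and it is precisely the failure to glue these that produces the gerbe class you computed in step 2. What actually resolves your coherence worry is not a global kernel but the fact that $\mu Sh_\Lambda$ is, by construction, already a sheaf (stack) of dg/$\infty$-categories; hence its classifying map into $B\mathrm{Pic}(\Bbbk)$ is a well-defined homotopy class, and a trivialization of $\mu Sh_\Lambda \simeq Loc_\Lambda$ is the same datum as a null-homotopy of that map. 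That null-homotopy is exactly what the paper's Remark \ref{maslovdata} calls Maslov data, a null-homotopy $X \times \mathbb{R} \to B^2\mathrm{Pic}(\Bbbk)$, so the tower of higher homotopies you were worried about is packaged automatically and there is no residual obstruction beyond $\mu(\Lambda) = 0$ and relative spin. In short, your obstruction-theoretic reading in steps 1--3 is the correct one; the stack framework itself, not a phantom global kernel, is what makes the Čech-style argument rigorous.
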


    We define the notion of microstalks, which defines the equivalence in Theorem \ref{microstalk}. Using that we are able to define simple sheaves and pure sheaves, or microlocal rank $r$ sheaves.

\begin{definition}
    Let $\Lambda \subset T^{*,\infty}M$ be a Legendrian submanifold. Suppose $\mu(\Lambda) = 0$ and $\Lambda$ is relative spin. For $p = (x, \xi) \in \Lambda$, the microstalk of $\mathscr{F} \in Sh(M)$ at $p$ is
    $$m_{\Lambda,p}(\mathscr{F}) = m_\Lambda(\mathscr{F})_p \in \mathrm{Mod}(\Bbbk).$$
    $\mathscr{F} \in Sh_\Lambda(M)$ is called microlocal rank $r$ if $m_{\Lambda,p}(\mathscr{F})$ is concentrated at a single degree with rank $r$. When $r = 1$ it is called simple, and in general it is called pure.
\end{definition}

    The microstalk of a sheaf can be computed explicitly as indicated by the following proposition.

\begin{proposition}[Guillermou \cite{Gui}*{Theorem 7.6~(iv), 7.9, 8.10 \& Lemma 11.4}]
    Let $\Lambda \subset T^{*,\infty}M$ be a smooth Legendrian. Suppose the Maslov class $\mu(\Lambda) = 0$ and $\Lambda$ is relative spin. When the front projection of $\Lambda$ onto $M$ is a smooth hypersurface near $p$ and $\varphi \in C^1(M)$ is a local defining function for $\Lambda$, then
    $$m_{\Lambda,p}(\mathscr{F}) = \Gamma_{\varphi \geq 0}(\mathscr{F})_x[-d(p)],$$
    where $d(p) \in \mathbb{Z}$ is called the Maslov potential.
\end{proposition}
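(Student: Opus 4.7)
The plan is to establish the formula by reducing to a standard local model, exhibiting an explicit generator of $\mu Sh_\Lambda$ near $p$, and then invoking the Kashiwara--Schapira computation of $\mu hom$ with a constant sheaf on a half-space.

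First I would use the contact invariance of $\mu Sh$ together with the hypothesis that the front projection is smooth near $p$: by a local contactomorphism of $T^{*,\infty}M$ one may arrange that $\Lambda$ coincides near $p$ with the inward unit conormal bundle $\nu^{*,\infty}_{H,-}M$ of the smooth hypersurface $H = \{\varphi = 0\}$, with $p$ corresponding to the conormal ray above some base point $x \in H$. Since $\mu Sh$ depends only on the germ of $\Lambda$ (as noted in the remark after Lemma \ref{sheafmusheaf}), it suffices to prove the formula in this normal form.

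Next I would identify a local generator of $\mu Sh_\Lambda$ near $p$. In the model, the weakly constructible sheaf $\Bbbk_{\{\varphi \geq 0\}}$ has singular support exactly $\Lambda$ near $p$ (as in the earlier example on conormal bundles), so it defines an object in $\mu Sh_\Lambda$ on a neighborhood of $p$. Under Guillermou's equivalence $m_\Lambda: \mu Sh_\Lambda \xrightarrow{\sim} Loc_\Lambda$ of Theorem \ref{Gui}, this object corresponds to a cohomological shift of the trivial rank $1$ local system; the vanishing of the Maslov class together with the relative spin structure provide a globally defined Maslov potential, whose value $d(p) \in \mathbb{Z}$ at $p$ is exactly this shift. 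For any $\mathscr{F} \in Sh_\Lambda(M)$ defined near $x$, the microstalk at $p$ is then computed by pairing with this generator:
$$m_{\Lambda,p}(\mathscr{F}) \simeq \mu hom\bigl(\Bbbk_{\{\varphi \geq 0\}}[d(p)], \mathscr{F}\bigr)_p \simeq \mu hom\bigl(\Bbbk_{\{\varphi \geq 0\}}, \mathscr{F}\bigr)_p[-d(p)].$$
Applying the Kashiwara--Schapira identification $\mu hom(\Bbbk_Z, \mathscr{F}) \simeq \nu_Z(\mathrm{R}\Gamma_Z\mathscr{F})$ for the closed half-space $Z = \{\varphi \geq 0\}$, and evaluating at the conormal point above $x$, gives exactly $\Gamma_{\{\varphi \geq 0\}}(\mathscr{F})_x[-d(p)]$, as claimed.

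The main obstacle is tracking the Maslov potential coherently across the whole Legendrian: the local generator $\Bbbk_{\{\varphi \geq 0\}}$ is constructed inside a contact chart, whereas the integer $d(p)$ is the local value of globally chosen Maslov data. Matching the two pointwise, and verifying that the resulting shifts transform correctly across singular strata of the front projection, is the substance of Guillermou's construction (the cited Theorems 7.6(iv), 7.9, 8.10 and Lemma 11.4); the relative spin condition is what removes a potential sign twist in identifying the associated local system on $\Lambda$ with a constant sheaf, so that $d(p)$ indeed lives in $\mathbb{Z}$ rather than in a $\pm 1$-torsor over it.
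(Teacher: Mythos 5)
The paper does not prove this Proposition: it is stated as a direct citation of Guillermou's Theorems~7.6(iv), 7.9, 8.10 and Lemma~11.4, so there is no in-paper argument to compare against. Read on its own terms, your sketch has a logical ordering problem. You deduce the formula from Theorem~\ref{Gui} (the equivalence $m_\Lambda : \mu Sh_\Lambda \xrightarrow{\sim} Loc_\Lambda$, Guillermou's Theorem~11.5), but the results the Proposition actually cites are all \emph{prior} to Theorem~11.5 in Guillermou's paper: the microstalk formula is the input used to build $m_\Lambda$ and prove the equivalence, not a consequence of it. Moreover, Theorem~\ref{Gui} as a black box only asserts that the named functor $m_\Lambda$ is an equivalence; your key step — ``$\Bbbk_{\{\varphi\geq 0\}}$ corresponds under $m_\Lambda$ to a shift of the trivial rank-$1$ local system, with shift $d(p)$'' — is itself a computation of $m_\Lambda$ on a test object, which presupposes the very local description the Proposition records. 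The Yoneda move that follows is correct once that is granted, but the actual content has been absorbed into the unjustified identification.

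A smaller but genuine slip: the Kashiwara--Schapira formula you invoke, $\mu hom(\Bbbk_Z, \mathscr{F}) \simeq \nu_Z(\mathrm{R}\Gamma_Z\mathscr{F})$, is not quite right. The specialization $\nu_Z$ is a conic sheaf on the normal bundle $T_Z M$, not the conormal where $\mu hom$ lives; the relevant operation is Sato's microlocalization $\mu_Z$ (the Fourier--Sato transform of $\nu_Z$). In addition $Z = \{\varphi \geq 0\}$ is a closed domain with boundary rather than a submanifold, so the standard KS statement does not apply verbatim. The local identification at the conormal ray over $x$ is precisely the cone computation recorded in Example~\ref{microstalk-cone}, which is what the Proposition packages as $\Gamma_{\varphi\geq 0}(\mathscr{F})_x$. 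Your reduction to a local model via contact invariance, and your point that the vanishing Maslov class and relative spin condition are what make $d(p)$ a well-defined $\mathbb{Z}$-valued potential rather than a $\pm 1$-torsor, are both correct; the gap is that the coherence of $d(p)$ across strata and charts — which you defer to Guillermou — is exactly the content of the cited theorems, so the proposal is closer to a restatement than a proof.
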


\begin{example}\label{microstalk-cone}
    Suppose $\Lambda = \nu^{*,\infty}_{\mathbb{R}^n \times (0,+\infty),-}\mathbb{R}^{n+1} \subset T^{*,\infty}\mathbb{R}^{n+1}$ (which is the inward conormal bundle of $\mathbb{R}^n \times (0,+\infty)$) and $\mathscr{F} \in Sh_\Lambda(\mathbb{R}^{n+1})$. Then $\mathscr{F}$ is determined by
    \[\xymatrix{
    F_- & F_+ \ar[l] \ar[r]^\sim & F_+
    }\]
    Then for $p = (0,\dots,0, 0; 0, \dots, 0, 1) \in \Lambda$ we can pick $\varphi(x) = x_{n+1}$, and get
    $$\Gamma_{\varphi \geq 0}(\mathscr{F})_{(0,...,0)} = \mathrm{Cone}(F_+ \rightarrow F_-)[-1] \simeq \mathrm{Tot}(F_+ \rightarrow F_-).$$
    One can see that the definition of the microstalk coincides with the definition of the microlocal monodromy defined by Shende-Treumann-Zaslow \cite[Section 5.1]{STZ}, and indeed
    $$m_{\Lambda,p}(\mathscr{F}) \simeq \mu mon(\mathscr{F})_p[-1].$$
\end{example}

\begin{proposition}[\cite{Guisurvey}*{Equation (1.4.4)}]
    Let $\Lambda \subset T^{*,\infty}M$ be a Legendrian submanifold. $\mathscr{F} \in Sh_\Lambda(M)$ is microlocal rank $r$ at $p \in \Lambda$ iff
    $$\mu hom(\mathscr{F, F})_p \simeq \Bbbk^{r^2}.$$
\end{proposition}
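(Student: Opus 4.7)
The strategy is to reduce the question to a pointwise computation in $\mu Sh_{\Lambda,p}$ via Guillermou's equivalence, and then invoke a bit of homological algebra. By Theorem \ref{Gui}, there is an equivalence of sheaves of dg categories $m_\Lambda:\mu Sh_\Lambda \xrightarrow{\sim} Loc_\Lambda$ on $\Lambda$, sending $\mathscr{F}$ to the local system whose stalk at $p$ is the microstalk $m_{\Lambda,p}(\mathscr{F})$. Since this equivalence is symmetric monoidal for the natural internal Hom/tensor structures, the sheaf $\mu hom(\mathscr{F},\mathscr{F})|_\Lambda$ corresponds under $m_\Lambda$ to the local system $\mathcal{H}om(m_\Lambda\mathscr{F}, m_\Lambda\mathscr{F})$. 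Taking stalks at $p$ yields the key identification
$$\mu hom(\mathscr{F},\mathscr{F})_p \simeq \mathrm{End}_\Bbbk\bigl(m_{\Lambda,p}(\mathscr{F})\bigr).$$

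With this in hand the proposition becomes a statement about perfect complexes of $\Bbbk$-modules. Set $V:=m_{\Lambda,p}(\mathscr{F})$. If $V$ is concentrated in a single degree $d$ with rank $r$, then $V\simeq \Bbbk^r[-d]$ and directly $\mathrm{End}_\Bbbk(V)\simeq \mathrm{End}_\Bbbk(\Bbbk^r)\simeq \Bbbk^{r^2}$ sits in degree $0$. For the converse, assuming $\Bbbk$ is a field (the ring case reduces to this by base change to residue fields, using perfectness of $V$), write $V\simeq \bigoplus_i H^i(V)[-i]$ so that
$$\mathrm{End}(V) \;\simeq\; \bigoplus_{i,j}\mathrm{Hom}_\Bbbk\bigl(H^i(V),H^j(V)\bigr)[i-j].$$
The hypothesis $\mathrm{End}(V)\simeq\Bbbk^{r^2}$ concentrated in degree $0$ forces all the off-diagonal terms with $i\neq j$ to vanish, which (since the product $r_i r_j$ vanishes only when one of $H^i(V),H^j(V)$ is zero) forces $V$ to be concentrated in a single degree; the diagonal rank condition $(\dim H^d(V))^2 = r^2$ then gives $\dim H^d(V)=r$.

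The principal obstacle is verifying step~(2), i.e.\ that the stalk $\mu hom(\mathscr{F},\mathscr{F})_p$ really does compute $\mathrm{End}$ of the microstalk, rather than merely being related to it up to some shifts or monodromy correction. To make this rigorous I would unfold the stalk definition as the colimit $\mathrm{colim}_{U\ni p}\,\mathrm{Hom}_{\mu Sh_U}(\mathscr{F}|_U,\mathscr{F}|_U)$ over a cofinal system of conical neighbourhoods $U\subset T^*M$ of $p$, apply $m_\Lambda$ termwise (using that $m_\Lambda$ is an equivalence of \emph{sheaves} of dg categories, not just of global sections), and identify the resulting colimit with the stalk of the internal Hom local system $\mathcal{H}om(m_\Lambda\mathscr{F},m_\Lambda\mathscr{F})$ at $p$, which is $\mathrm{End}_\Bbbk(m_{\Lambda,p}\mathscr{F})$. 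Once this naturality is in place the linear-algebra argument above finishes both implications, and the Maslov-data assumption enters only implicitly through the hypotheses of Theorem~\ref{Gui} that make $m_\Lambda$ available.
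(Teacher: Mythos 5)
The paper does not give its own proof — the statement is cited directly to Guillermou's survey — so I will evaluate your argument on its own terms. Your overall approach is correct and is essentially the intended one: reduce the stalk of $\mu hom(\mathscr{F},\mathscr{F})$ at $p$ to $\mathrm{End}_\Bbbk$ of the microstalk, then do linear algebra in $\mathrm{Perf}(\Bbbk)$. A few remarks.

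First, the appeal to a \emph{symmetric monoidal} structure on $m_\Lambda$ is a red herring and somewhat overstates what you need. Any equivalence of sheaves of (dg) categories automatically sends the Hom-sheaf to the Hom-sheaf, so no monoidality is required. In fact you do not even need the full Guillermou equivalence $\mu Sh_\Lambda \simeq Loc_\Lambda$: the paper already records, via \cite{Gui} and \cite{NadShen}, that the stalk category $\mu Sh_{\Lambda,p}$ is $\mathrm{Mod}(\Bbbk)$ with $\mathscr{F}$ going to $m_{\Lambda,p}(\mathscr{F})$, and $\mu hom(\mathscr{F},\mathscr{G})_p$ is by definition the filtered colimit over conic neighbourhoods $U\ni p$ of $\mathrm{Hom}_{\mu Sh(U)}(\mathscr{F}|_U,\mathscr{G}|_U)$. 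Using a cofinal system of small contractible $U$ on which $\mu Sh_\Lambda|_U$ is already equivalent to $\mathrm{Mod}(\Bbbk)$ (which exists once $\Lambda$ is a smooth Legendrian with Maslov data near $p$), the colimit stabilizes and yields $\mathrm{Hom}_\Bbbk(m_{\Lambda,p}\mathscr{F},m_{\Lambda,p}\mathscr{G})$ directly. This is a cleaner route to the same identification and bypasses the monoidality claim entirely.

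Second, the linear-algebra direction is correct over a field, and your displayed decomposition $V\simeq\bigoplus_i H^i(V)[-i]$ with the cross-term argument forcing concentration in one degree is exactly right. For a general ring $\Bbbk$, though, two caveats deserve mention: (i) the reduction to residue fields recovers only that $V\otimes^{\mathbf{L}}_\Bbbk\kappa$ is concentrated in a single degree for each residue field $\kappa$, and to conclude that the degree is \emph{the same} for all $\kappa$ (hence that $V$ itself is a projective module placed in one degree) one needs $\mathrm{Spec}\,\Bbbk$ connected; (ii) over a non-local ring one concludes only that $V$ is finitely generated \emph{projective} of rank $r$, not free, and then $\mathrm{End}_\Bbbk(V)\cong V^\vee\otimes_\Bbbk V$ is projective of rank $r^2$ but not literally $\Bbbk^{r^2}$ unless $V$ is free. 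Neither caveat affects the field case or the paper's actual use of the proposition, but if you intend the statement to hold verbatim over rings you should add these hypotheses or read ``rank $r$'' as ``free of rank $r$.''

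Finally, the ``if'' direction also tacitly assumes the microstalk is a perfect complex (so that the stalk-of-Hom equals Hom-of-stalks computation applies); this is fine in the intended constructible setting but worth flagging, since the proposition is stated for $\mathscr{F}\in Sh_\Lambda(M)$ without an explicit finiteness hypothesis.
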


%
%
\subsection{Functors for Hamiltonian Isotopies}
    We review the equivalence functors coming from a Hamiltonian isotopy, constructed for sheaves $Sh(M)$ by Guillermou-Kashiwara-Schapira \cite{GKS}, and for microlocal sheaves $\mu Sh_\Lambda(\Lambda)$ by Kashiwara-Schapira \cite[Section 7.2]{KS}.

\begin{definition}\label{lagmovie}
    Let $\widehat H_s: T^*M \times I \rightarrow \mathbb{R}$ be a homogeneous Hamiltonian on $T^*M$, and $H_s = \widehat H_s|_{T^{*,\infty}M}$ the corresponding contact Hamiltonian on $T^{*,\infty}M$.
    For a conical Lagrangian $\widehat\Lambda$, the Lagrangian movie of $\widehat\Lambda$ under the Hamiltonian isotopy $\varphi_{\widehat H}^s\,(s \in I)$ is
    $$\widehat\Lambda_{\widehat H} = \{(x, \xi, s, \sigma) | (x, \xi) = \varphi_{\widehat H}^s(x_0, \xi_0), \sigma = -\widehat H_s \circ \varphi_{\widehat H}^s(x_0, \xi_0), (x_0, \xi_0) \in \widehat \Lambda\} \subset T^*(M \times I).$$
    For a Legendrian $\Lambda$, the Legendrian movie of $\Lambda$ under the corresponding contact Hamiltonian isotopy is $\Lambda_H = \widehat\Lambda_{\widehat H} \cap T^{*,\infty}M$.
\end{definition}


\begin{theorem}[Guillermou-Kashiwara-Schapira \cite{GKS}*{Proposition 3.12}]\label{GKS}
    Let $H_s: T^{*,\infty}M \times I \rightarrow \mathbb{R}$ be a contact Hamiltonian on $T^{*,\infty}M$ and $\Lambda$ a Legendrian in $T^{*,\infty}M$. Then there are equivalences
    $$Sh_{\Lambda}(M) \xleftarrow{\sim} Sh_{\Lambda_{H}}(M \times I) \xrightarrow{\sim} Sh_{\varphi_{H}^1(\Lambda)}(M),$$
    given by restriction functors $i_0^{-1}$ and $i_1^{-1}$ where $i_s: M \times s \hookrightarrow M \times I$ is the inclusion. We denote their inverses by $\Psi_H^0$ and $\Psi_H^1$, and $\Psi_H = i_1^{-1} \circ \Psi_H^0$.
\end{theorem}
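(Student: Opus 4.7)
The plan is to construct explicit quasi-inverse functors $\Psi_H^0$ and $\Psi_H^1$ to the restrictions $i_0^{-1}$ and $i_1^{-1}$ via sheaf quantization of the Hamiltonian flow using an integral kernel. Since the statement concerns equivalences induced by restriction, one must produce functors in the opposite direction, verify that everything preserves the prescribed singular support constraints, and establish full faithfulness of the restriction functor on $Sh_{\Lambda_H}(M \times I)$.

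First I would verify that the restrictions $i_s^{-1}$ actually land in the correct subcategories. By Proposition \ref{sspullback} applied to the closed embedding $i_s : M \times \{s\} \hookrightarrow M \times I$, the singular support of $i_s^{-1}\mathscr{F}$ for $\mathscr{F} \in Sh_{\Lambda_H}(M \times I)$ is bounded by $i_s^{\#}(\Lambda_H)$, and a direct computation from Definition \ref{lagmovie} shows that the fiber of $\Lambda_H$ over time $s$ projects onto $\varphi_H^s(\Lambda) \subset T^{*,\infty}M$ after dropping the $ds$-component. This places $i_s^{-1}\mathscr{F}$ in $Sh_{\varphi_H^s(\Lambda)}(M)$.

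Second, I would construct a kernel $K_H \in Sh(M \times M \times I)$ that sheaf-quantizes the graph of $\varphi_H^s$ and restricts at $s = 0$ to the diagonal sheaf $\Bbbk_\Delta$, then define $\Psi_H^0(\mathscr{F}) = K_H \circ \mathscr{F}$ via kernel convolution (pullback to $M \times M \times I$, tensor, and proper pushforward). The construction of $K_H$ proceeds by partitioning $I$ into small subintervals on which $\varphi_H^s$ is $C^0$-close to the identity, quantizing each slice by deforming $\Bbbk_\Delta$ using the generating function of the Hamiltonian, and concatenating these pieces. A standard singular support composition estimate (combining Propositions \ref{sspushforward} and \ref{sspullback} in the kernel convolution) shows that $K_H \circ \mathscr{F}$ lies in $Sh_{\Lambda_H}(M \times I)$, and the boundary condition $K_H|_{s=0} = \Bbbk_\Delta$ immediately gives $i_0^{-1} \circ \Psi_H^0 \simeq \mathrm{id}$.

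The main obstacle is proving the reverse composition $\Psi_H^0 \circ i_0^{-1} \simeq \mathrm{id}$ on $Sh_{\Lambda_H}(M \times I)$, i.e.\ that a sheaf with singular support in $\Lambda_H$ is determined by its restriction at $s = 0$. The key idea is a non-characteristic propagation argument via the microlocal Morse lemma (Proposition \ref{morselemma}): for any $\mathscr{G} \in Sh_{\Lambda_H}(M \times I)$ and any point $(x,s) \notin \pi(\Lambda_H)$, one can find a defining function whose differential avoids $SS(\mathscr{G})$, propagating sections along $s$-direction flow lines governed by the Hamiltonian. Making this precise requires a careful analysis near $\Lambda_H$ using the natural cofiber/fiber sequences of $\mu hom$, with the identification at $s = 0$ then pinning down the sheaf globally. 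The corresponding statement at $s = 1$ follows by the same argument applied symmetrically, and one defines $\Psi_H^1$ and $\Psi_H = i_1^{-1} \circ \Psi_H^0$ so that the triangle of equivalences commutes, yielding $Sh_\Lambda(M) \xrightarrow{\sim} Sh_{\varphi_H^1(\Lambda)}(M)$.
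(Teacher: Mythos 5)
The paper does not actually prove this statement; it is cited verbatim from Guillermou--Kashiwara--Schapira \cite{GKS}*{Proposition 3.12}, so there is no in-paper proof to compare against. Your proposal is a reasonable reconstruction of the GKS sheaf-quantization strategy, and it correctly identifies the two load-bearing ingredients: (i) the construction of a kernel $K_H$ on $M \times M \times I$ with singular support in the conification of the isotopy graph and with $K_H|_{s=0} \simeq \Bbbk_\Delta$, so that $\Psi_H^0 = K_H \circ -$ is a section of $i_0^{-1}$; and (ii) full faithfulness of the restriction $i_0^{-1}$ on $Sh_{\Lambda_H}(M \times I)$, deduced from the fact that $\Lambda_H$ is non-characteristic for the projection $M \times I \to I$ (since the movie never contains covectors purely in the $dI$-direction), which lets one propagate via the microlocal Morse lemma.

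Two caveats about the details you sketch. First, the kernel construction in GKS is not literally done by ``quantizing with generating functions'' on short time intervals; it rests on an existence-and-uniqueness theorem for sheaves with prescribed conic singular support (this uniqueness is essential and should be stated, since it is exactly what makes the concatenation over small subintervals unambiguous and what pins down $K_H$). Second, once you have full faithfulness of $i_0^{-1}$ together with $i_0^{-1}\Psi_H^0 \simeq \mathrm{id}$, essential surjectivity of $i_0^{-1}$ follows formally (given $\mathscr{G}$, set $\mathscr{F} = i_0^{-1}\mathscr{G}$; then $i_0^{-1}\Psi_H^0(\mathscr{F}) \simeq i_0^{-1}\mathscr{G}$, and full faithfulness lifts this to $\Psi_H^0(\mathscr{F}) \simeq \mathscr{G}$); you do not need a separate ``pinning down the sheaf globally'' argument. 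Phrasing the conclusion as a two-step deduction (section plus full faithfulness implies equivalence) would make the logic cleaner than the non-characteristic propagation description as currently written.
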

\begin{remark}[\cite{GKS}*{Remark 3.9}]\label{GKSpara}
    This theorem also works for a $U$-parametric family of Hamiltonian isotopies on $T^{*,\infty}M \times U \rightarrow T^{*,\infty}M$ for a contractible manifold $U$.
\end{remark}

    For the category of microlocal sheaves $\mu Sh_\Lambda(\Lambda)$, Kashiwara-Schapira \cite[Theorem 7.2.1]{KS} showed that it is invariant under contact transformations, which are just (local) contactomorphisms. Nadler-Shende explained how this will imply the invariance of $\mu Sh_\Lambda(\Lambda)$ under (global) Hamiltonian isotopies.

\begin{theorem}[Nadler-Shende \cite{NadShen}*{Lemma 6.6}]\label{cont-trans}
    Let $H_s: T^{*,\infty}M \times I \rightarrow \mathbb{R}$ be a contact Hamiltonian on $T^{*,\infty}M$ and $\Lambda$ a Legendrian in $T^{*,\infty}M$. Then there are equivalences
    $$\mu Sh_\Lambda(\Lambda) \xleftarrow{\sim} \mu Sh_{\Lambda_H}(\Lambda_H) \xrightarrow{\sim} \mu Sh_{\varphi_H^1(\Lambda)}(\varphi_H^1(\Lambda))$$
    given by restriction functors $i_0^{-1}$ and $i_1^{-1}$ where $i_s: M \times s \hookrightarrow M \times I$ is the inclusion. We denote their inverses by $\Psi_H^0$ and $\Psi_H^1$, and $\Psi_H = i_1^{-1} \circ \Psi_H^0$.
\end{theorem}
\begin{proof}
    For any open subset $\Omega \subset T^{*,\infty}M$, consider the contact movie $\Omega_{H,s,\epsilon} \subset T^{*,\infty}(M \times I)$ in the time interval $I_{s,\epsilon} = (s - \epsilon, s + \epsilon)$. Then $i_s^{-1}$ induces equivalences of categories
    $$Sh_{\Lambda_H \cup \,\Omega_{H,s,\epsilon}^c}(M \times I_{s,\epsilon}) \xrightarrow{\sim} Sh_{\varphi_H^s(\Lambda \cup \Omega^c)}(M), \,\,\,
    Sh_{\,\Omega_{H,s,\epsilon}^c}(M \times I_{s,\epsilon}) \xrightarrow{\sim} Sh_{\varphi_H^s(\Omega^c)}(M).$$
    Since $Sh(M \times I_{s,\epsilon}) = Sh(M \times I)/Sh_{T^*(M \times I \backslash I_{s,\epsilon})}(M \times I)$, we get an equivalence of presheaves
    $$i_s^{-1}: \varinjlim_{\epsilon \rightarrow 0}\,\mu Sh_{\Lambda_H}^\text{pre}(\Omega_{H,s,\epsilon}) \xrightarrow{\sim} \mu Sh^\text{pre}_{\varphi_H^s(\Lambda)}(\varphi_H^s(\Omega)),$$
    where the left hand side is the pull back of a presheaf, as $\Lambda_H \cap \Omega_{H,s,\epsilon}\,(\epsilon > 0)$ form a relative neighbourhood basis of $\varphi_H^s(\Lambda \cap \Omega)$. Therefore, after sheafification, we can get an equivalence given by the pull back
    $$i_s^{-1}: \mu Sh_{\Lambda_H}(\varphi_H^s(\Lambda)) \xrightarrow{\sim} \mu Sh_{\varphi_H^s(\Lambda)}(\varphi_H^s(\Lambda)).$$
    Then, since $\mu Sh_{\Lambda_H}^\text{pre}(\Omega_{H,s,\epsilon}) \simeq \mu Sh_{\Lambda_H}^\text{pre}(\Omega_{H,s',\epsilon})$, we also know that $\mu Sh_{\Lambda_H}^\text{pre}$ forms a presheaf that is locally constant in the $I$ direction (along contact movies of points). Since $I$ is contractible, we can conclude that there is an equivalence given by the restriction
    $$\mu Sh_{\Lambda_H}(\Lambda_H) \xrightarrow{\sim} \mu Sh_{\varphi_H^s(\Lambda)}(\varphi_H^s(\Lambda)).$$
    This completes the proof of the theorem.
\end{proof}
\begin{remark}\label{cont-trans-para}
    One can show that the theorem also works for a $U$-parametric family of Hamiltonian isotopies for a contractible manifold $U$, following Remark \ref{GKSpara}.
\end{remark}
\begin{remark}\label{GKSviaCont}
    From our proof, one may notice that there is a commutative diagram
    \[\xymatrix{
    Sh_{\Lambda_{H}}(M \times I) \ar[r]^{\hspace{7pt}i_s^{-1}} \ar[d] & Sh_{\varphi_{H}^s(\Lambda)}(M) \ar[d] \\
    \mu Sh_{\Lambda_H}(\Lambda_H) \ar[r]^{i_s^{-1}\hspace{12pt}} & \mu Sh_{\varphi_H^s(\Lambda)}(\varphi_H^s(\Lambda)).
    }\]
\end{remark}

\subsection{Sheaf Quantization of Weinstein Manifolds}\label{sheafquan}
    In this section we state the series of results by Nadler-Shende \cite{NadShen}, which will be the main ingredients of our constructions.

    Basically they are able to embed the Weinstein manifold $(X, d\lambda)$ into the contact boundary of some cotangent bundle and thus construct a microlocal sheaf category $\mu Sh_{\mathfrak{c}_X}(\mathfrak{c}_X)$ from the Lagrangian skeleton $\mathfrak{c}_X$ of $X$. Moreover, they are able to construct functors with respect to Liouville inclusions and homotopies that are fully faithful.

    First of all, let us recall their construction of the microlocal sheaf category $\mu Sh_{\mathfrak{c}_X}(\mathfrak{c}_X)$ for any Weinstein manifold $X$ with subanalytic skeleton $\mathfrak{c}_X$ (\cite[Section 8]{NadShen}).

\begin{remark}
    It is explained in \cite[Section 7.7]{GPS3} how to make the Lagrangian skeleton $\mathfrak{c}_X$ of a Weinstein manifold $X$ subanalytic. Namely any Weinstein manifold admits some Weinstein structure with a subanalytic skeleton.
\end{remark}

    Gromov's $h$-principle \cite{EliMisha}*{Theorem 12.3.1} enables us to embed the contactization of the Weinstein manifold $(X \times \mathbb{R}, \ker(dt - \lambda))$ into the contact boundary of a higher dimensional cotangent bundle $T^{*,\infty}N$, as long as (1)~$\dim T^*N \geq \dim X + 2$ and (2)~there is a bundle map $\Psi_s: TX \times T\mathbb{R} \rightarrow T(T^{*,\infty}N)$ covering a smooth embedding $f: X \times \mathbb{R} \hookrightarrow T^{*,\infty}N$ such that $\Psi_0 = df$ and $\Psi_1|_{TX \times \mathbb{R}}$ is a symplectic bundle map into the contact distribution $\xi_{T^{*,\infty}N}$. The second condition is purely algebraic topological. For example, $N = \mathbb{R}^m$ for sufficiently large $m$, this is satisfied as long as $X$ is stably polarizable \cite{Shendehprinciple}.


    Consider the symplectic normal bundle $\nu_{X \times \mathbb{R}}(T^{*,\infty}N)$ of $X \times \mathbb{R} \hookrightarrow T^{*,\infty}N$. Assume that by choosing $\dim T^*N > 0$ to be sufficiently large, we can find a Lagrangian subbundle $(X \times \mathbb{R})_\sigma \subset \nu_{X \times \mathbb{R}}(T^{*,\infty}N)$ by choosing a section $\sigma$ of the Lagrangian Grassmannian of the normal bundle $\nu_{X \times \mathbb{R}}(T^{*,\infty}N)$, as in \cite[Lemma 9.1]{NadShen}. This is a null homotopy of
    $$X \times \mathbb{R} \rightarrow BU \rightarrow BLGr$$
    (where $BLGr$ is the classifying space of the stable Lagrangian Grassmannian). Let the Legendrian thickening of $\mathfrak{c}_X$ be
    $$\mathfrak{c}_{X,\sigma} = (X \times \mathbb{R})_\sigma|_{\mathfrak{c}_X \times \{0\}}.$$

\begin{definition}
    The microlocal sheaf category on a Weinstein manifold $X$, with a chosen section $\sigma$ in the stable Lagrangian Grassmannian, is defined by
    $$\mu Sh_{\mathfrak{c}_X} = \mu Sh_{\mathfrak{c}_{X,\sigma}}|_{\mathfrak{c}_X \times \{0\}}.$$
\end{definition}
\begin{remark}
    Nadler-Shende showed that this microlocal sheaf category is independent of the Lagrangian skeleton and the contact embedding we choose. It does depend on the thickening because that is determined by the section in Lagrangian Grassmannian.
\end{remark}
\begin{remark}\label{maslovdata}
    More generally, the existence of a section in the stable Lagrangian Grassmannian can be relaxed to simply the existence of a section $\sigma: X \times \mathbb{R} \rightarrow B\mathrm{Pic}(\Bbbk)|_{X \times \mathbb{R}}$, which is classified by Maslov data \cite[Definition 10.6]{NadShen}, i.e.~a null homotopy
    $$X \times \mathbb{R} \rightarrow B^2\mathrm{Pic}(\Bbbk),$$
    and the microlocal sheaf category can be defined by $\sigma^{-1}\mu Sh_{B\mathrm{Pic}(\Bbbk)|_{\mathfrak{c}_X}}$. The Maslov data for ring spectrum coefficients are carefully studied by Jin \cite{JinJhomo} and \cite{NadShen}*{Section 11}. When $\Bbbk$ is a ring, this is ensured as long as $2c_1(X) = 0$.

    Therefore, from now on we will always assume the existence of a section in the Lagrangian Grassmannian of the stable normal bundle without loss of generality.
\end{remark}

    Given a Weinstein subdomain $U \subset X$ equipped with Maslov data, let $\lambda' = \lambda - df$ be the Liouville form on $X$ such that the Liouville flow $Z_{\lambda'}$ is transverse to $\partial_\infty U$, and $\mathfrak{c}_U$ the skeleton of $U$ under the Liouville flow $Z_{\lambda'}$. Then the primitive $f|_U: U \rightarrow \mathbb{R}$ determines the Legendrian lift of the skeleton $\mathfrak{c}_U$ in $X \times \mathbb{R}$ being $\widetilde{\mathfrak{c}}_U = \{(x, f(x)) | x \in \mathfrak{c}_U\}$. Define
    $$\mu Sh_{\widetilde{\mathfrak{c}}_U} = \mu Sh_{\widetilde{\mathfrak{c}}_{U,\sigma}}|_{\widetilde{\mathfrak{c}}_U}.$$
    In particular, when $U = T^*L$ is a Weinstein subdomain, we write $\widetilde{L}$ for the Legendrian lift of $L$ and consider $\mu Sh_{\widetilde{L}}$. It will be natural to construct an embedding functor
    $$\mu Sh_{\widetilde{\mathfrak{c}}_U}(\widetilde{\mathfrak{c}}_U) \longrightarrow \mu Sh_{\mathfrak{c}_X}(\mathfrak{c}_X).$$
    Nadler-Shende's main result is about constructing such an embedding functor and proving its full faithfulness. When $U = T^*L$, this realizes exact Lagrangian submanifolds $L \subset X$ as objects in the microlocal sheaf category.

\begin{definition}[Nadler-Shende \cite{NadShen}*{Definition 2.9}]
    Let $\Lambda_\zeta, \Lambda_\zeta' \subset Y\,(\zeta \in \mathbb{R})$ be two families of subsets in a contact manifold. $\Lambda_\zeta, \Lambda_\zeta'$ are gapped if there exists $\epsilon>0$, so that for any $\zeta \in \mathbb{R}$ there are no Reeb chords connecting $\Lambda_\zeta$ with $\Lambda'_\zeta$ with length shorter than $\epsilon$.
\end{definition}

\begin{theorem}[Nadler-Shende \cite{NadShen}*{Theorem 8.3 \& 9.7}]\label{NadShen}
    Consider a subanalytic Legendrian $\Lambda_1 \subset X \times \mathbb{R}$, which is either compact or locally closed, relatively compact with cylindrical ends. Let $\varphi_H^\zeta: X \times \mathbb{R} \rightarrow X \times \mathbb{R}$ be a contact isotopy for $\zeta \in (0, 1]$ conical near the cylindrical ends. Let ${\Lambda}_H \subset X \times \mathbb{R} \times (0, 1]$ be the Legendrian movie of $\varphi_H^\zeta$ and $\overline{\Lambda}_H$ be the closure of ${\Lambda}_H$ in $X \times \mathbb{R} \times [0, 1]$. Let $\Lambda_0 = \overline{\Lambda}_H \cap (X \times \mathbb{R} \times \{0\}) \subset X \times \mathbb{R}$ be the set of limit points of $\varphi_H^\zeta(\Lambda_1)$ as $\zeta \rightarrow 0$.

    Assume that for some contact form on $X \times \mathbb{R}$, the family $\varphi_H^\zeta(\Lambda_1)\,(\zeta \in (0, 1])$ is self gapped. Then there is a fully faithful functor
    $$\mu Sh_{\Lambda_1}(\Lambda_1) \hookrightarrow \mu Sh_{\Lambda_0}(\Lambda_0).$$
\end{theorem}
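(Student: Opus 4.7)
The plan is to realize the desired functor as a composite through the microlocal sheaf category on the whole contact movie. Let $i_\zeta: X \times \mathbb{R} \times \{\zeta\} \hookrightarrow X \times \mathbb{R} \times [0,1]$ denote the inclusion. I would aim for the factorization
$$\mu Sh_{\Lambda_1}(\Lambda_1) \xleftarrow[\sim]{i_1^{-1}} \mu Sh_{\overline{\Lambda}_H}(\overline{\Lambda}_H) \xrightarrow{i_0^{-1}} \mu Sh_{\Lambda_0}(\Lambda_0),$$
where the first arrow is an equivalence and the composition $i_0^{-1} \circ (i_1^{-1})^{-1}$ is the claimed fully faithful embedding.

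First I would establish $i_1^{-1}$ as an equivalence. On the open stratum $\Lambda_H = \overline{\Lambda}_H \cap (X \times \mathbb{R} \times (0,1])$, the parametric contact-invariance of $\mu Sh$ (Theorem~\ref{cont-trans} together with Remark~\ref{cont-trans-para}) applied to the contractible parameter $\zeta \in (0,1]$ yields $\mu Sh_{\Lambda_H}(\Lambda_H) \simeq \mu Sh_{\Lambda_1}(\Lambda_1)$ via $i_1^{-1}$. Extending this identification across the limiting stratum $\{\zeta = 0\}$ is where the self-gapped hypothesis enters crucially: the uniform lower bound on Reeb chord lengths within the family should allow one to enlarge $\overline{\Lambda}_H$ by a small Reeb pushoff in an ambient contactization, producing a \emph{collared} model for $\overline{\Lambda}_H$ inside a larger Legendrian isotopic to $\Lambda_H \times [0,1]$. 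Via this doubling-type construction, sheaves on $\overline{\Lambda}_H$ are controlled by their restrictions to $\Lambda_H$, yielding the open restriction equivalence $\mu Sh_{\overline{\Lambda}_H}(\overline{\Lambda}_H) \xrightarrow{\sim} \mu Sh_{\Lambda_H}(\Lambda_H)$.

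The remaining step is the full faithfulness of $i_0^{-1}$ on the image of the equivalence above. This reduces to showing that for any $\mathscr{F}, \mathscr{G} \in \mu Sh_{\overline{\Lambda}_H}(\overline{\Lambda}_H)$ in that image, the natural map
$$\Gamma\bigl(\overline{\Lambda}_H, \mu hom(\mathscr{F}, \mathscr{G})\bigr) \longrightarrow \Gamma\bigl(\Lambda_0, i_0^{-1}\mu hom(\mathscr{F}, \mathscr{G})\bigr)$$
is an equivalence -- equivalently, that no essential $\mu hom$ concentrates on $\Lambda_H$ away from $\Lambda_0$. The gapped condition should translate, via a Kashiwara--Schapira non-characteristic deformation argument of the type used in Proposition~\ref{morselemma}, into a non-characteristic bound on the outward conormals to a shrinking family of tubular neighborhoods of $\Lambda_0$ in $\overline{\Lambda}_H$, forcing propagation of sections from the open stratum down to the limit. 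The hardest part will be verifying this bound in full subanalytic generality: this is exactly where one must convert the symplectic-geometric gap hypothesis into a sharp statement about the limiting geometry of $\overline{\Lambda}_H$ near $\Lambda_0$, and is the geometric heart of Nadler--Shende's argument.
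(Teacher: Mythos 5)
Your overall shape --- identify $\mu Sh_{\Lambda_1}(\Lambda_1)$ with the category over the open movie $\Lambda_H$ by contact invariance, then pass to the limit $\zeta\to 0$ using the gapped hypothesis --- matches the paper's, but the factorization through $\mu Sh_{\overline{\Lambda}_H}(\overline{\Lambda}_H)$ contains a genuine gap. Your first arrow, the claimed equivalence $i_1^{-1}\colon \mu Sh_{\overline{\Lambda}_H}(\overline{\Lambda}_H)\xrightarrow{\sim}\mu Sh_{\Lambda_H}(\Lambda_H)\simeq \mu Sh_{\Lambda_1}(\Lambda_1)$, is unjustified and is in general false: restriction of the sheaf of categories to the open stratum $\Lambda_H$ forgets all data supported along the closed stratum $\Lambda_0$, which in the intended application is the entire skeleton $\mathfrak{c}_X\cup\Lambda_+\times\mathbb{R}$ and carries a far larger category than $\Lambda_1$ does (e.g.\ $Loc(L)$ versus $Sh_{\Lambda_+}(M)$ for a filling). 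There is no unique-extension statement across $\{\zeta=0\}$, and even defining $\mu Sh_{\overline{\Lambda}_H}$ requires first knowing that the closure is a reasonable subanalytic isotropic set. The paper never forms this category; the ``doubling'' you invoke is used for a different purpose.

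The actual mechanism is: embed $X\times\mathbb{R}$ into some $T^{*,\infty}N$ via the $h$-principle, apply antimicrolocalization (Theorem~\ref{antimicro}) to realize $\mu Sh_{\Lambda_H}(\Lambda_H)$ fully faithfully inside the category of honest sheaves $Sh_{(\Lambda_H,\partial\Lambda_H)^\prec_\epsilon}(N\times(0,1])_0$, and then apply the nearby cycle functor $i^{-1}j_*$ along $\zeta\to 0$. Full faithfulness is exactly Theorem~\ref{nearby}: the gapped condition enters through a GKS-type comparison of $j_*\mathscr{F}$ with its small Reeb pushoffs (no short chords appear uniformly in the family, so $Hom$ does not jump in the limit), not through a non-characteristic deformation bound on conormals to neighborhoods of $\Lambda_0$ inside $\overline{\Lambda}_H$. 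Finally Lemma~\ref{ssnearby} controls $SS(i^{-1}j_*\mathscr{F})$ so that the result lands in $Sh_{(\Lambda_0,\partial\Lambda_0)^\prec_\epsilon}(N)_0$ and can be microlocalized to $\mu Sh_{\Lambda_0}(\Lambda_0)$. So the functor is indeed morally ``extend over the closure, restrict to the special fiber,'' but this is carried out for sheaves on $N\times[0,1]$ where the six-functor formalism is available; your proposal is missing the antimicrolocalization step that makes that possible, and replaces the key full-faithfulness input (Theorem~\ref{nearby}) with an argument sketch that does not engage the actual use of the gap hypothesis.
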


    In particular, when $U \subset X$ is a Weinstein subdomain (with Liouville complement), consider the Liouville vector field $Z_\lambda$ on $(X, d\lambda)$ defined by
    $$\iota(Z_\lambda)d\lambda = \lambda.$$
    The Liouville flow of $Z_\lambda$ for negative time will compress $\mathfrak{c}_U$ onto $\mathfrak{c}_X$ as $z \rightarrow -\infty$. The Liouville flow on $X$ extends to a contact flow $\varphi_Z^z$ in $X \times \mathbb{R}$ with
    $$d\varphi_Z^z/dz = t\partial/\partial t + Z_\lambda,$$
    and thus we can consider the Legendrian movie of $\widetilde{\mathfrak{c}}_U$ under the flow. The theorem then gives a fully faithful embedding of microlocal sheaves on $\widetilde{\mathfrak{c}}_U$ to sheaves on $\lim_{z \rightarrow -\infty}\varphi_Z^z(\widetilde{\mathfrak{c}}_U) \subset \mathfrak{c}_X \times \{0\}$. Write $\phi_Z^\zeta = \varphi_Z^{\ln z}$. Applying the flow $\varphi_Z^z\,(z\in (-\infty,0])$ or $\phi_Z^\zeta\,(\zeta \in (0,1])$, we have \cite{NadShen}*{Section 8.2}
    \[\begin{split}\mu Sh_{\mathfrak{c}_U}(\mathfrak{c}_U) & \hookrightarrow \mu Sh_{\lim_{z \rightarrow -\infty}\varphi_Z^z(\widetilde{\mathfrak{c}}_U)}(\lim{}_{z \rightarrow -\infty}\varphi_Z^z(\widetilde{\mathfrak{c}}_U)) \\
    &\xrightarrow{\sim} \mu Sh_{\lim_{\zeta \rightarrow 0}\phi_Z^\zeta(\widetilde{\mathfrak{c}}_U)}(\lim{}_{\zeta \rightarrow 0}\,\phi_Z^\zeta(\widetilde{\mathfrak{c}}_U)) \hookrightarrow \mu Sh_{\mathfrak{c}_X}(\mathfrak{c}_X)
    \end{split}\]

    For the proof of the theorem, consider a contact embedding $X \times \mathbb{R} \hookrightarrow T^{*,\infty}N$ and choose a Lagrangian section $(X \times \mathbb{R})_\sigma \subset \nu_{X \times \mathbb{R}}(T^{*,\infty}N)$. One can pull back the contact form and the contact isotopy via the projection $\nu_{X \times \mathbb{R}}(T^{*,\infty}N) \rightarrow X \times \mathbb{R}$. Then $\varphi_H^\zeta(\Lambda_{1,\sigma})\,(\zeta \in (0, 1])$ is self gapped iff $\varphi_H^\zeta(\Lambda_1)\,(\zeta \in (0, 1])$ is. Hence one can replace $X \times \mathbb{R}$ in the theorem by $T^{*,\infty}N$.

    The proof consists of two steps. First, we need to construct a fully faithful embedding from $\mu Sh_\Lambda(\Lambda)$ back to $Sh(N)$ where we have Grothendieck's six functors; second, we need to construct a fully faithful functor between subcategories of $Sh(N)$.

    Here is the first step, called antimicrolocalization. Similar constructions for $\Lambda \subset J^1(M)$ with the standard Reeb flow have been obtained by Guillermou \cite{Gui}*{Section 13-15}. In wrapped Fukaya categories, this is called the stop doubling construction \cite[Example 8.7]{GPS2}.

\begin{definition}
    Let $\Lambda \subset T^{*,\infty}N$ be a subanalytic Legendrian with cylindrical end $\partial\Lambda$, i.e.~a contact embedding
    $$(T^{*}(U \times (-1, 1)) \times \mathbb{R}, \partial\Lambda \times [0,1)) \hookrightarrow (T^{*,\infty}N, \Lambda).$$
    Let $\varphi_s\,(s \in \mathbb{R})$ be some Reeb flow on $T^{*,\infty}N$. For $\partial\Lambda_{\pm s} \times [0, 1) \subset T^{*}(U \times (-1, 1)) \times \mathbb{R}$, connect the ends $\partial \Lambda_{\pm s}$ by a family of standard cusps $\partial\Lambda\, \times \prec$. Then
    $$(\Lambda, \partial\Lambda)^\prec_s = \Lambda_{-s} \cup \Lambda_s \cup (\partial\Lambda \,\times \prec).$$
\end{definition}

\begin{theorem}[Nadler-Shende \cite{NadShen}*{Theorem 7.28}]\label{antimicro}
    Let $\Lambda \subset T^{*,\infty}N$ be a subanalytic Legendrian, which is either compact or locally closed, relatively compact with cylindrical ends. Let $c$ be the shortest length of Reeb orbits starting and ending on $\Lambda$. For $\epsilon < c/2$, the microlocalization functor
    $$Sh_{(\Lambda,\partial\Lambda)^\prec_\epsilon}(N)_0 \rightarrow \mu Sh_{\Lambda_{-\epsilon}}(\Lambda_{-\epsilon})$$
    admits a right inverse. Here the subscript $0$ means the subcategory of objects with $0$ stalk away from a compact set.
\end{theorem}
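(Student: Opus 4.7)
The plan is to construct the right inverse (the antimicrolocalization functor) $\alpha: \mu Sh_{\Lambda_{-\epsilon}}(\Lambda_{-\epsilon}) \to Sh_{(\Lambda,\partial\Lambda)^\prec_\epsilon}(N)_0$ by building an explicit local model, transporting it via contact transformations, and then gluing. The geometric intuition is that the doubled Legendrian provides exactly enough room to reify microlocal data: the copy $\Lambda_{-\epsilon}$ carries the microstalk, the companion copy $\Lambda_\epsilon$ "closes off" the sheaf, and the cusp connection along $\partial\Lambda \times \prec$ guarantees that the resulting sheaf has zero stalk outside a compact set.

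First I would set up the local model. Near a smooth point of $\Lambda$, apply a contact Darboux chart (using Theorem \ref{cont-trans}) so that $\Lambda$ becomes the inward conormal of a coordinate hyperplane $\{x_n = 0\} \subset N$ and the Reeb flow acts by transverse translation. Then $\Lambda_{-\epsilon} \cup \Lambda_\epsilon$ bounds the slab $S_\epsilon = \{-\epsilon \leq x_n \leq \epsilon\}$. For a microlocal object $\mathcal{M}$ with local microstalk $F$ at a point of $\Lambda_{-\epsilon}$, I would set
$$\alpha(\mathcal{M})_{\mathrm{loc}} = F \otimes \Bbbk_{S_\epsilon}.$$
By Proposition \ref{sspushforward} and the computation in Example \ref{microstalk-cone}, this object has singular support in $\Lambda_{-\epsilon} \cup \Lambda_\epsilon$ and microstalk $F$ along $\Lambda_{-\epsilon}$, which is exactly what a local right inverse must produce.

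Next I would handle the boundary. Near $\partial\Lambda$, replace the slab by its cusp closure along $\partial\Lambda \times \prec$: the two walls merge into a smooth hypersurface with a cusp edge, matching the prescribed singular support $(\Lambda,\partial\Lambda)^\prec_\epsilon$ and forcing the sheaf to vanish away from a compact set, as required for the subscript $0$. The gapping hypothesis $\epsilon < c$ enters crucially here: it prevents Reeb chords from linking the two copies of $\Lambda$, so $(\Lambda,\partial\Lambda)^\prec_\epsilon$ is embedded and the slab is a well-defined submanifold. I would then globalize using the sheaf-of-categories property of $\mu Sh$: the local slab constructions glue over a finite cover of $\Lambda_{-\epsilon}$ by Darboux charts because contact transformations intertwining the models are unique up to contractible choice, so the local data are coherent on overlaps.

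The main obstacle will be upgrading this object-level assignment to a coherent dg functor, not merely a map on isomorphism classes. The cleanest route, which I would pursue if the descent gluing proves delicate, is to produce a universal sheaf kernel on $N \times (\text{model of }\Lambda)$ and define $\alpha$ by convolution, paralleling the integral kernel of Theorem \ref{GKS}; the contractible ambiguity in choosing contact Darboux charts is then absorbed into the kernel at once. Once $\alpha$ is defined as a functor, verifying $m \circ \alpha \simeq \mathrm{id}$ reduces to the local slab computation, where the microstalk along $\Lambda_{-\epsilon}$ of $F \otimes \Bbbk_{S_\epsilon}$ is tautologically $F$ by Proposition \ref{morselemma}. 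The shift discrepancy between the microstalks along $\Lambda_{-\epsilon}$ and $\Lambda_\epsilon$, as well as the signs, are exactly what the Lagrangian Grassmannian section and relative spin structure (Remark \ref{maslovdata}) are designed to trivialize coherently.
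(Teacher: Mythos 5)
The paper does not prove this statement; it is quoted verbatim as \cite{NadShen}*{Theorem 6.28}, with the surrounding discussion only noting that it parallels Guillermou's doubling construction and GPS's stop-doubling. So there is no ``paper's proof'' to match your proposal against, and the comparison has to be against the actual Nadler--Shende / Guillermou arguments.

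Your proposal has a genuine gap at the globalization step. You apply a contact Darboux chart so that $\Lambda$ becomes a conormal of a coordinate hyperplane, and then declare the local model to be $F \otimes \Bbbk_{S_\epsilon}$ for a slab $S_\epsilon \subset N$. But a contact transformation of $T^{*,\infty}N$ is not induced by a diffeomorphism of $N$; it changes $\mu Sh$ up to equivalence (this is exactly Theorem \ref{cont-trans}), but it does \emph{not} act on $Sh(N)$ in any naive way. Consequently the slabs $S_\epsilon$ in different Darboux charts live in different copies of a ``straightened'' base manifold, not in a common open cover of the original $N$, and there is no well-posed gluing problem for the local sheaves you write down. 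This is precisely the difficulty that makes antimicrolocalization nontrivial: building a sheaf on a \emph{fixed} base $N$ out of microlocal data that is only contact-invariantly defined. Guillermou \cite{Gui}*{Sections 13--15} resolves it by a direct and quite delicate construction using convexity properties of the doubled front in $J^1(M)$, and Nadler--Shende resolve it with a ``relative'' construction that stays on the fixed base throughout, rather than locally straightening $\Lambda$.

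Your fallback --- build a universal kernel and define $\alpha$ by convolution, ``paralleling Theorem \ref{GKS}'' --- is closer in spirit to what actually works, but as stated it is not a proof strategy: the GKS kernel is specific to Hamiltonian isotopies (graphs of contact flows), whereas the antimicrolocalization kernel would have to quantize the correspondence between $\Lambda$ and its doubled front, and producing such a kernel is essentially the content of the theorem, not a shortcut around it. The positive aspects of your write-up --- the correct local seed $F\otimes\Bbbk_{S_\epsilon}$, the role of the cusp closure in killing the stalk at infinity, and the role of the gapping condition in keeping $\Lambda_{-\epsilon}$ and $\Lambda_\epsilon$ disjoint --- are all consistent with the real proof, but the mechanism that assembles the local data into a coherent dg functor on a fixed $N$ is missing, and that is where the actual work lies.
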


    By applying the antimicrolocalization functor, we now only need to construct a functor in $Sh(N)$. Namely we consider the nearby cycle functor and show that it is fully faithful in our setting. This full faithfulness criterion is proposed by Nadler \cite{NadNonchar} and proved for families of Legendrians by Zhou \cite{Zhou}*{Proposition 3.2}.

\begin{definition}\label{relativess}
    For a fibration $\pi: N \times B \rightarrow B$, let the projection of the cotangent bundle to the fiber be $\Pi: T^*(N \times B) \rightarrow T^*(N \times B) / \pi^*T^*B$. For $\mathscr{F} \in Sh(N \times B)$, the singular support relative to $\pi$ is
    $$SS_\pi(\mathscr{F}) = \overline{\Pi(SS(\mathscr{F}))}.$$
\end{definition}

\begin{theorem}[Nadler-Shende \cite{NadShen}*{Theorem 5.1}]\label{nearby}
    Let $\mathscr{F}, \mathscr{G}$ be weakly constructible sheaves on $N \times [-1, 0) \cup (0, 1]$. Write $j: N \times [-1, 0) \cup (0, 1] \rightarrow N \times [-1, 1]$ and $i: N \times \{0\} \rightarrow N \times [-1, 0) \cup (0, 1]$. Suppose
\begin{enumerate}
  \item $SS^\infty(\mathscr{F}), SS^\infty(\mathscr{G}) \cap \pi_{\mathbb{R}}^*T^*([-1, 0) \cup (0, 1]) = \varnothing$;
  \item The family of pairs $SS^\infty_\pi(\mathscr{F}), SS^\infty_\pi(\mathscr{G})$ are gapped for some contact form.
\end{enumerate}
    Then we have a natural isomorphism
    $$\Gamma(i^{-1}\mathscr{H}om(j_*\mathscr{F}, j_*\mathscr{G})) \xrightarrow{\sim} Hom(i^{-1}j_*\mathscr{F}, i^{-1}j_*\mathscr{G}).$$
\end{theorem}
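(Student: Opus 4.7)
The plan is to establish the natural map and then identify both sides as the stable value of a single family of Hom-complexes over shrinking tubular neighborhoods of $\{t=0\}$, with stability furnished by the microlocal Morse lemma. The natural morphism itself is formal: for any closed embedding $i$ and any sheaves $\mathscr{A}, \mathscr{B}$, adjunction produces a canonical map $i^{-1}\mathscr{H}om(\mathscr{A}, \mathscr{B}) \to \mathscr{H}om(i^{-1}\mathscr{A}, i^{-1}\mathscr{B})$, and applying this with $\mathscr{A} = j_*\mathscr{F}$, $\mathscr{B} = j_*\mathscr{G}$ and taking $\Gamma$ yields the map in the statement. Everything else is showing this comparison is an equivalence.

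Next, I would reinterpret both sides as colimits over shrinking tubes $U_\epsilon = N \times \mathbb{R} \times (-\epsilon, \epsilon)$. By the definition of $i^{-1}$ on a closed subspace,
\[
\Gamma\bigl(N \times \mathbb{R},\, i^{-1}\mathscr{H}om(j_*\mathscr{F}, j_*\mathscr{G})\bigr) \;=\; \varinjlim_{\epsilon \to 0}\, \Gamma\bigl(U_\epsilon,\, \mathscr{H}om(j_*\mathscr{F}, j_*\mathscr{G})\bigr),
\]
and on the other side $\mathrm{Hom}(i^{-1}j_*\mathscr{F}, i^{-1}j_*\mathscr{G}) = \varinjlim_{\epsilon \to 0} \mathrm{Hom}\bigl(j_*\mathscr{F}|_{U_\epsilon}, j_*\mathscr{G}|_{U_\epsilon}\bigr)$ by the same sheaf/colimit formalism. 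Both systems receive the structural transition maps between Hom-complexes on $U_{\epsilon} \supset U_{\epsilon'}$, and the comparison map of the theorem is compatible with these transition maps. Hence it suffices to show the transition maps become equivalences for all $0 < \epsilon' < \epsilon$ small enough.

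The stability is a microlocal Morse lemma statement (Proposition \ref{morselemma}) for the function $t$: if $\pm dt \notin SS\bigl(\mathscr{H}om(j_*\mathscr{F}, j_*\mathscr{G})\bigr)$ over the punctured annular region $\{0 < |t| < \epsilon\}$, then the restriction $\Gamma(U_\epsilon, -) \to \Gamma(U_{\epsilon'}, -)$ is an equivalence. The two hypotheses are exactly what makes this estimate hold. Condition (1) guarantees that $SS^\infty(\mathscr{F})$ and $SS^\infty(\mathscr{G})$ carry no $dt$-components away from $\{t=0\}$, so on the open part of the tube the singular support is horizontal. Condition (2), the gapped condition on the $t$-parametric family of pairs of Reeb-projections, is precisely what is needed near $\{t = 0\}$: it provides a uniform positive lower bound on Reeb chord lengths between the relevant microsupports as $t \to 0$, which prevents the appearance of $\partial_t$-directions in the microsupport of $\mu hom$ on the punctured annulus.

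The main obstacle I expect is the interaction between $j_*$ and $\mu hom$ across the removed locus. By Proposition \ref{sspushforward}, $j_*$ introduces the inward conormal of $\{t \neq 0\}$, i.e.~a full $dt$-fiber along $\{t=0\}$; this is allowed at $t = 0$ itself but must be excluded on $\{0 < |t| < \epsilon\}$. Translating the contact-geometric gapped hypothesis into the non-characteristic estimate $\pm dt \notin SS\bigl(\mathscr{H}om(j_*\mathscr{F}, j_*\mathscr{G})\bigr)$ on the punctured annulus is the technical heart: one must show that no chord shortens to zero length as one approaches the wall, so that the pushforward's singular support is confined exactly to $\{t = 0\}$ in the $dt$-direction. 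Once that uniform Reeb-length bound is in hand, the microlocal Morse lemma stabilizes both colimits simultaneously and identifies them, yielding the claimed isomorphism.
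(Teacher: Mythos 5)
This statement is cited by the paper as a black box from Nadler--Shende; the paper gives no proof of its own, so there is nothing in this document to compare against line by line. Evaluating your proposal on its own terms, the central step is circular.

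The problem is the second ``colimit identification.'' You assert, attributing it to ``the same sheaf/colimit formalism,'' that
\[
\mathrm{Hom}(i^{-1}j_*\mathscr{F},\, i^{-1}j_*\mathscr{G}) \;=\; \varinjlim_{\epsilon \to 0}\, \mathrm{Hom}\bigl(j_*\mathscr{F}|_{U_\epsilon},\, j_*\mathscr{G}|_{U_\epsilon}\bigr).
\]
But this equality \emph{is} the content of the theorem, not a formal triviality. Indeed, since internal $\mathscr{H}om$ commutes with restriction to opens, $\mathrm{Hom}(j_*\mathscr{F}|_{U_\epsilon}, j_*\mathscr{G}|_{U_\epsilon}) \simeq \Gamma\bigl(U_\epsilon, \mathscr{H}om(j_*\mathscr{F}, j_*\mathscr{G})\bigr)$, so your two colimits are term-by-term identical, and the ``comparison map'' between them is literally the identity. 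If both colimit descriptions were formal, the theorem would follow for arbitrary $\mathscr{F}, \mathscr{G}$ with no hypotheses at all---and it is false without the gapped condition (2), which is precisely what prevents Reeb chords from shrinking to zero length as $t \to 0$ and thereby controls the failure of $i^{-1}$ to commute with $\mathscr{H}om$. The entire analytic work of the theorem has been absorbed into a step you treat as bookkeeping.

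There are secondary gaps as well. The tubes $U_\epsilon = N \times \mathbb{R} \times (-\epsilon,\epsilon)$ are \emph{not} cofinal among open neighborhoods of the noncompact subset $N \times \mathbb{R} \times \{0\}$, so even the first colimit identification $\Gamma(i^{-1}\mathscr{H}) \simeq \varinjlim_\epsilon \Gamma(U_\epsilon, \mathscr{H})$ needs an argument (typically using weak constructibility and some uniformity, neither invoked here). The microlocal Morse lemma (Proposition \ref{morselemma}) is stated for \emph{proper} functions, and $|t|$ is not proper on $N \times \mathbb{R}^2$; applying it as you do requires a compactification or an additional support condition. And the translation of the contact-geometric gapped hypothesis into a noncharacteristic estimate is described as ``the technical heart'' but no argument is offered; note moreover that condition (1) already forces constancy in $t$ on the punctured annulus, so the content of the gapped condition has to be located not on $\{0 < |t| < \epsilon\}$ but in the behavior of $j_*$ along $\{t=0\}$ itself, where Proposition \ref{sspushforward} introduces new $dt$-covectors---exactly the part your stabilization argument does not reach.

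A correct proof cannot avoid computing, or at least controlling, $\mathrm{Hom}(i^{-1}j_*\mathscr{F}, i^{-1}j_*\mathscr{G})$ on the nose and showing that no contribution to $\mu hom$ is created at $t=0$ beyond what was already present for small $t\neq 0$; this is where the quantitative lower bound on Reeb chord lengths from condition (2) must enter, and where the argument in Nadler--Shende lives.
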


    Finally, instead of considering the whole category $Sh(N)$, we need to restrict to the subcategories $Sh_{(\Lambda_1, \partial\Lambda_1)_\epsilon^\prec}(N)$ and $Sh_{(\Lambda_0, \partial\Lambda_0)_\epsilon^\prec}(N)$. Therefore we need the following estimate, which follows from Proposition \ref{sspushforward} and \ref{sspullback} \cite[Theorem 6.3.1 \& Corollary 6.4.4]{KS}.

\begin{lemma}[\cite{NadShen}*{Lemma 3.16}]\label{ssnearby}
    For $\mathscr{F} \in Sh(N \times (0, 1])$, denoting $j: N \times (0, 1] \rightarrow N \times [-1, 1]$ and $i: N \times \{0\} \rightarrow N \times [-1, 1]$,
    $$SS(i^{-1}j_*\mathscr{F}) \subset \overline{\Pi(SS(\mathscr{F}))} \cap T^*(N \times \{0\}).$$
\end{lemma}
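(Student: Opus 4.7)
The plan is a direct computation assembling the two singular support bounds already quoted in the excerpt, so I would write it in three quick steps and one identification.

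First I would apply Proposition \ref{sspushforward} to the open inclusion $j: N \times \mathbb{R}_{>0} \hookrightarrow N \times \mathbb{R}_{\geq 0}$. Since the only relevant boundary is $N \times \{0\}$, the inward conormal bundle $\nu^*_{N \times \mathbb{R}_{>0},-}(N \times \mathbb{R}_{\geq 0})$ is supported on $N \times \{0\}$ and equal to $\{(x,0;0,\tau) : \tau \geq 0\}$ there (and the zero section elsewhere). Thus
\[
SS(j_*\mathscr{F}) \;\subset\; SS(\mathscr{F}) \,\widehat{+}\, \nu^*_{N \times \mathbb{R}_{>0},-}(N \times \mathbb{R}_{\geq 0}).
\]
Next I would apply Proposition \ref{sspullback} to the closed inclusion $i: N \times \{0\} \hookrightarrow N \times \mathbb{R}_{\geq 0}$, giving
\[
SS(i^{-1}j_*\mathscr{F}) \;\subset\; i^{\#}\!\left(SS(\mathscr{F}) \,\widehat{+}\, \nu^*_{N \times \mathbb{R}_{>0},-}(N \times \mathbb{R}_{\geq 0})\right).
\]

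The only real content is the set-theoretic unwinding of the right-hand side, which I would carry out directly from the two definitions recalled just before Proposition \ref{sspushforward}. A point $(x;\xi) \in T^*N \simeq T^*(N\times\{0\})$ lies in the right-hand side iff there are sequences $(y_n, t_n; \eta_n, \tau_n) \in SS(\mathscr{F}) \,\widehat{+}\, \nu^*_{N \times \mathbb{R}_{>0},-}$ and base points $x_n \in N$ with $x_n, y_n \to x$, $t_n \to 0$, and $(\eta_n, \tau_n)$ converging modulo $T^*\mathbb{R}$ to $(\xi,0)$ after an allowed normal shift. Unpacking $\widehat{+}$ with the conormal bundle then lets one absorb any $\tau_n$-component into the conormal summand at the boundary. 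Therefore the only information that survives is $y_n \to x$, $t_n \to 0$, and $\eta_n \to \xi$, which is precisely the condition that $(x;\xi,\cdot)$ lies in the closure $\overline{\Pi(SS(\mathscr{F}))}$ inside $T^*(N \times \mathbb{R})/T^*\mathbb{R}$ restricted to the fiber over $t = 0$.

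Combining the two inclusions with this identification yields
\[
SS(i^{-1}j_*\mathscr{F}) \;\subset\; \overline{\Pi(SS(\mathscr{F}))} \cap T^*(N \times \{0\}),
\]
as claimed. The only place where one needs to be mildly careful is the last unwinding step: one must verify that the $\widehat{+}$ operation with the inward conormal (which \emph{a priori} only adds non-negative $\tau$-components under a limiting procedure) is enough to realize every projection direction in $\Pi(SS(\mathscr{F}))$ after applying $i^{\#}$. This is automatic because $i^{\#}$ already forgets the $\tau$-component, so the sign constraint $\tau \geq 0$ imposed by the inward conormal plays no role once one restricts to $T^*(N \times \{0\})$. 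I expect this book-keeping — matching the two ad hoc operations $\widehat{+}$ and $i^{\#}$ against the explicit closure $\overline{\Pi(\cdot)}$ of Definition \ref{relativess} — to be the only subtle part of the argument.
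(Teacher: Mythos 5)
Your proposal takes exactly the route the paper indicates: apply Proposition \ref{sspushforward} to the open inclusion $j$, then Proposition \ref{sspullback} to the closed inclusion $i$, and unwind the operations $\widehat{+}$ and $i^{\#}$ against the definition of $\overline{\Pi(\cdot)}$. That is precisely what the paper means when it says the lemma ``follows from Proposition \ref{sspushforward} and \ref{sspullback}''; your write-up just supplies the bookkeeping, and the two key observations you make --- that the inward conormal contributes nothing to the $T^*N$-component of the covector, and that passing through $i^{\#}$ discards the normal ($\tau$) component --- are exactly the facts that make the unwinding go through.

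One small remark on your last paragraph. You phrase the ``subtle part'' as needing to check that $\widehat{+}$ with the inward conormal ``is enough to \emph{realize} every projection direction in $\Pi(SS(\mathscr{F}))$''; that is the containment in the wrong direction --- the lemma only asserts $SS(i^{-1}j_*\mathscr{F}) \subset \overline{\Pi(SS(\mathscr{F}))} \cap T^*(N\times\{0\})$, so you never need the left side to exhaust the right. What actually has to be verified is the forward inclusion: any sequence $(y_n,t_n;\eta_n,\tau_n) \in SS(\mathscr{F})\,\widehat{+}\,\nu^*_{U,-}$ contributing to $i^{\#}$ has $\eta_n$ realized (after a diagonal extraction) as a limit of $T^*N$-components of points in $SS(\mathscr{F})$, because the conormal summand has vanishing $T^*N$-component; the sign constraint $\tau \geq 0$ and the inequalities $|a_n-b_n||\alpha_n|\to 0$, $|x_n-y_n||\eta_n|\to 0$ then simply never interfere. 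With that direction stated correctly, the argument is complete.
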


    Note that by Theorem \ref{cont-trans}, since $\Lambda_H$ is the Legendrian movie of $\Lambda_1$ under the flow $\varphi_H^\zeta\,(\zeta \in (0,1])$, we have a quasi-equivalence of categories
    $$\mu Sh_{\Lambda_1}(\Lambda_1) \simeq \mu Sh_{\Lambda_H}(\Lambda_H).$$
    Using Theorem \ref{antimicro}, \ref{nearby} together with Lemma \ref{ssnearby}, Theorem \ref{NadShen} now immediately follows.

\subsection{Various Microlocal Sheaf Categories}\label{variouscat}
    We have defined the sheaf of categories $\mu Sh_\Lambda$ consisting of microlocal sheaves with possibly unbounded and infinite rank cohomology. However, in general we are really dealing with either the sheaf category of compact objects or the one of proper objects. We explain how to restrict to these categories. Most of the discussions can be found in \cite{NadWrapped}*{Section 3.6 \& 3.8} and \cite{GPS3}*{Section 4.4 \& 4.5}.

    Throughout the discussion, we will be considering the microlocal sheaf category $\mu Sh_\Lambda$ on a subanalytic Legendrian (or conical subanalytic Lagrangian) subset.

\begin{definition}
    For $\mathscr{F} \in \mu Sh_\Lambda(U)$, we call it a compact object if the Yoneda module $\Gamma(U, \mu hom(\mathscr{F}, -))$ commutes with coproducts. $\mu Sh_\Lambda^c(U) \subset \mu Sh_\Lambda(U)$ is the full subcategory of compact objects.
\end{definition}

    We know that for a subanalytic Legendrian subset, $\mu Sh_\Lambda$ is a sheaf of compactly generated presentable categories (closed under limits and colimits), and in addition, for $V \subset U$, the restriction functor
    $$\rho_{UV}: \, \mu Sh_\Lambda(U) \rightarrow \mu Sh_\Lambda(V)$$
    preserves limits and colimits. Since it preserves limits, there is a left adjoint called the corestriction functor
    $$\rho^*_{UV}: \, \mu Sh_\Lambda(V) \rightarrow \mu Sh_\Lambda(U).$$
    Since $\rho_{UV}$ preserves colimits, $\rho^*_{UV}$ preserves compact objects. Hence the corestriction functor restricts to the subsheaf of category of compact objects
    $$\rho^*_{UV}: \, \mu Sh^c_\Lambda(V) \rightarrow \mu Sh^c_\Lambda(U).$$
    Note that $\mu Sh_{\Lambda \cap U}(U) = \mu Sh_\Lambda(U)$, so the corestriction functor is indeed a functor on global sections of categories $\mu Sh^c_{\Lambda \cap V}(V) \rightarrow \mu Sh^c_{\Lambda \cap U}(U).$

\begin{lemma}[Nadler \cite{NadWrapped}*{Theorem 3.16}]\label{cosheaf}
    $\mu Sh^c_\Lambda$ together with the corestriction functors form a cosheaf of dg categories.
\end{lemma}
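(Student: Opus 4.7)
The plan is to deduce the cosheaf property for $\mu Sh^c$ from the already-established sheaf property of $\mu Sh$, using the limit–colimit duality in the $\infty$-category $\mathrm{Pr}^L_{st}$ of presentable stable $\infty$-categories together with the equivalence $\mathrm{Ind}: \mathrm{Cat}^{ex,\mathrm{perf}}_\infty \xrightarrow{\sim} \mathrm{Pr}^L_{st,\omega}$ between idempotent-complete small stable categories and compactly generated presentable stable categories.

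First, I would invoke that $\mu Sh$ is a sheaf of presentable stable $\infty$-categories (Definition \ref{musheaf}): for an open cover $\{U_i\}$ of $U$, with \v{C}ech nerve $U_\bullet$, restriction gives
$$\mu Sh(U) \xrightarrow{\sim} \lim_{\Delta} \mu Sh(U_\bullet)$$
in $\mathrm{Pr}^L_{st}$, the transition maps being the restrictions $\rho_{VW}$. Since each $\rho_{VW}$ preserves both products and coproducts (as recalled just before the lemma), between presentable stable categories it admits both a left adjoint $\rho^*_{VW}$ and a right adjoint. I would then apply the standard fact that a limit diagram in $\mathrm{Pr}^L_{st}$ whose transition functors admit left adjoints has the same underlying category as the colimit diagram built from those left adjoints. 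Applying this with $\rho^*_{VW}$ as the transition maps of the opposite diagram yields
$$\mu Sh(U) \xrightarrow{\sim} \mathrm{colim}_{\Delta^{op}} \mu Sh(U_\bullet)$$
in $\mathrm{Pr}^L_{st}$, where now the structure maps are the corestrictions.

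Second, I would pass to compact objects. The corestriction $\rho^*_{VW}$ preserves compact objects because its right adjoint $\rho_{VW}$ preserves coproducts, so the colimit diagram above restricts to a diagram of corestrictions in $\mathrm{Cat}^{ex,\mathrm{perf}}_\infty$. Assuming $\mu Sh(U) \simeq \mathrm{Ind}(\mu Sh^c(U))$ holds for every open $U$, the equivalence $\mathrm{Ind}$ transports colimits in $\mathrm{Cat}^{ex,\mathrm{perf}}_\infty$ to colimits in $\mathrm{Pr}^L_{st,\omega}$, and conversely the compact objects functor is its inverse. Passing to compacts in the colimit presentation therefore gives
$$\mu Sh^c(U) \xrightarrow{\sim} \mathrm{colim}_{\Delta^{op}} \mu Sh^c(U_\bullet),$$
which is precisely the cosheaf descent condition.

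The main obstacle I anticipate is the compact generation of $\mu Sh(U)$ for arbitrary open $U$ in the subanalytic/contact topology on $T^*M$: the formal $\infty$-categorical manipulations above are essentially automatic once this is in hand, but producing enough compact generators on each open is what makes $\mathrm{Ind}$ available. This should follow from Nadler–Shende's antimicrolocalization (Theorem \ref{antimicro}), which realizes $\mu Sh^c$-objects by compactly-supported sheaves on an ambient cotangent bundle and lets one test compactness via the six-functor formalism; everything else reduces to the duality $\mathrm{Pr}^L_{st} \simeq (\mathrm{Pr}^R_{st})^{op}$ and the standard interaction of $\mathrm{Ind}$ with colimits.
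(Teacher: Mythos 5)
The paper does not actually supply a proof of this lemma: it is a black-box citation of \cite{NadWrapped}*{Theorem 3.16}, so there is no internal argument to compare against. Your reconstruction nevertheless follows the same route as Nadler's original argument, and it is structurally correct: start from the sheaf condition $\mu Sh(U) \xrightarrow{\sim} \lim_\Delta \mu Sh(U_\bullet)$, observe that the restrictions lie in both $\mathrm{Pr}^L_{st}$ and $\mathrm{Pr}^R_{st}$ (preserving coproducts and products respectively), pass through the anti-equivalence $\mathrm{Pr}^L_{st}\simeq (\mathrm{Pr}^R_{st})^{\mathrm{op}}$ to convert the limit over restrictions into a colimit over corestrictions, and then use $\mathrm{Ind}\colon \mathrm{Cat}^{\mathrm{perf}}_\infty \xrightarrow{\sim} \mathrm{Pr}^L_{st,\omega}$ to descend to compact objects. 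You also correctly flag that compact generation of $\mu Sh(U)$ on every open is the genuine technical input, not a formality.

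Two small imprecisions worth tightening. First, the reference for compact generation: in \cite{NadWrapped} this is established by showing that microstalk functors are corepresentable by compact objects (a wrapping construction), not via the Nadler--Shende antimicrolocalization theorem you point to; that is a different device (used in \cite{NadShen} for realizing $\mu Sh$ as a summand of an honest sheaf category), and while it is related in spirit it is not literally the source of compact generation in Nadler's proof of this statement. Second, there is a silent step in your final paragraph: you obtain a colimit presentation in $\mathrm{Pr}^L_{st}$ but apply $\mathrm{Ind}$ as an equivalence onto $\mathrm{Pr}^L_{st,\omega}$, so you need the fact that a colimit in $\mathrm{Pr}^L_{st}$ of compactly generated categories along compact-preserving functors is again compactly generated with compacts being the colimit of compacts --- equivalently, that the inclusion $\mathrm{Pr}^L_{st,\omega} \hookrightarrow \mathrm{Pr}^L_{st}$ preserves colimits. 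This is standard (and is exactly what Nadler invokes), but it is a lemma in its own right and should be cited rather than folded into ``$\mathrm{Ind}$ transports colimits.''
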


    On the other hand, we can consider the subcategory of proper (pseudoperfect) objects.

\begin{definition}
    $\mu Sh^b_\Lambda(U)$ is the category of proper objects in $\mu Sh_\Lambda(U)$ defined by
    $$\mu Sh^b_\Lambda(U) = \mathrm{Fun}^\text{ex}(\mu Sh^c_\Lambda(U)^\text{op}, \mathrm{Perf}(\Bbbk)),$$
    where $\mathrm{Fun}^\text{ex}(-, -)$ is the dg category of exact functors.
\end{definition}

    Since $\mu Sh^c_\Lambda$ is a cosheaf of categories, we know that $\mu Sh^b_\Lambda$ is a sheaf of categories. The following theorem shows that $\mu Sh^b_\Lambda(U)$ is the equivalent to the subcategories of objects in $\mu Sh_\Lambda(U)$ with perfect stalks.

\begin{theorem}[Nadler \cite{NadWrapped}*{Theorem 3.21}]\label{perfcompact}
    The natural pairing $\Gamma(U, \mu hom(-, -))$ defines an equivalence between the category of proper objects $\mu Sh^b_\Lambda(U)$ and the full subcategory of $\mu Sh_\Lambda(U)$ of objects with perfect microstalks.
\end{theorem}

\section{Proof of the Main Results}\label{mainsec}

\subsection{Construction of cobordism functor}\label{mainthm}
    In this section we construct the Lagrangian cobordism and prove full faithfulness, which is the first part of Theorem \ref{main}. The proof here will be relatively concise, yet it still includes an outline of the constructions in Section \ref{sheafquan} and \ref{variouscat}. The reader may find more detailed explanation in those sections.

\begin{theorem}\label{exist}
    Let $X$ be a Weinstein manifold with subanalytic skeleton $\mathfrak{c}_X$, $\Lambda_-, \Lambda_+ \subset \partial_\infty X$ be Legendrian submanifolds, and $L \subset \partial_\infty X \times \mathbb{R}$ an exact Lagrangian cobordism from $\Lambda_-$ to $\Lambda_+$. There is a cobordism functor between the microlocal sheaf categories of compact objects
    $$\Phi_L^*: \, \mu Sh^c_{\mathfrak{c}_X \cup \Lambda_+ \times \mathbb{R}}(\mathfrak{c}_X \cup \Lambda_+ \times \mathbb{R}) \longrightarrow \mu Sh^c_{\mathfrak{c}_X \cup \Lambda_- \times \mathbb{R}}(\mathfrak{c}_X \cup \Lambda_- \times \mathbb{R}) \otimes_{Loc^c(\Lambda_-)} Loc^c(L),$$
    and a fully faithful adjoint functor between microlocal sheaf categories of proper objects
    $$\Phi_L: \, \mu Sh^b_{\mathfrak{c}_X \cup \Lambda_- \times \mathbb{R}}(\mathfrak{c}_X \cup \Lambda_- \times \mathbb{R}) \times_{Loc^b(\Lambda_-)} Loc^b(L) \hookrightarrow \mu Sh^b_{\mathfrak{c}_X \cup \Lambda_+ \times \mathbb{R}}(\mathfrak{c}_X \cup \Lambda_+ \times \mathbb{R}),$$
\end{theorem}
\begin{proof}
    Following Section \ref{sheafquan}, Gromov's $h$-principle \cite{EliMisha}*{Theorem 12.3.1} enables us to embed the contactization of the Weinstein manifold $X \times \mathbb{R}$ into the contact boundary of a higher dimensional cotangent bundle $T^{*,\infty}N$.

    Consider the symplectic normal bundle $\nu_{X \times \mathbb{R}}(T^{*,\infty}N)$ of $X \times \mathbb{R} \hookrightarrow T^{*,\infty}N$, and as in Remark \ref{maslovdata} we assume that there is a Lagrangian subbundle $(X \times \mathbb{R})_\sigma \subset \nu_{X \times \mathbb{R}}(T^{*,\infty}N)$ by choosing a section in the Lagrangian Grassmannian of the normal bundle $\nu_{X \times \mathbb{R}}(T^{*,\infty}N)$. Consider the subanalytic Lagrangian skeleta $\mathfrak{c}_X \cup \Lambda_\pm \times \mathbb{R}$ and the Legendrian lifts $(\mathfrak{c}_X \cup \Lambda_\pm \times \mathbb{R}) \times \{0\}$ in $X \times \mathbb{R}$. Let the microlocal sheaf category supported on $\mathfrak{c}_X \cup \Lambda_\pm \times \mathbb{R}$ be
    $$\mu Sh_{\mathfrak{c}_X \cup \Lambda_\pm \times \mathbb{R}} = \mu Sh_{(\mathfrak{c}_X \cup \Lambda_\pm \times \mathbb{R})_\sigma}|_{\mathfrak{c}_X \cup \Lambda_\pm \times \mathbb{R}}.$$
    It is determined by $X \times \mathbb{R}$ and the choice of a section in the stable Lagrangian Grassmannian and is independent of the contact embedding.

    Since $(X, d\lambda)$ is a Weinstein manifold, we may assume that $X \setminus \mathfrak{c}_X \cong \partial_\infty X \times (-\infty, +\infty)$ where the Liouville flow $Z_\lambda = e^r\partial/\partial r$. Suppose $L \cap \partial_\infty X \times (-\infty, -r_0] = \Lambda_- \times (-\infty, -r_0]$. Glue ${L} \cap \partial_\infty X \times [-r_0, +\infty)$ with $\Lambda_- \times (-\infty, -r_0]$ along $\Lambda_- \times \{-r_0\}$, and denote by $\Lambda_- \times \mathbb{R} \cup {L}$ their concatenation in $X$. Note that this is the same as $L$, but we use the notation to emphasize that we will view it as the union of two separate parts to apply the cosheaf property later.

    We can glue the Legendrian lift $\widetilde{L}$ of the Lagrangian $L$ to the skeleton $\mathfrak{c}_X \cup \Lambda_\pm \times \mathbb{R}$ in the contactization $X \times \mathbb{R}$. As the primitive of $L$ defined by $df_L = \lambda|_L$ is a constant when the $\mathbb{R}$ coordinate in $\partial_\infty X \times \mathbb{R}$ satisfies $r < -r_0$, we may assume that $f_L = 0$ when $r < -r_0$. The Legendrian lift of $L$ is defined by
    $$\widetilde{L} = \{(x, f_L(x)) | x \in L\} \subset X \times \mathbb{R}.$$
    Then we consider the sheaf of categories $\mu Sh_{\widetilde{L}}$. Since $\widetilde{L}$ coincides with $\Lambda_- \times \mathbb{R} \subset X \times \{0\}$ when $r < -r_0$, we can glue $\widetilde{L} \cap \partial_\infty X \times [-r_0, +\infty) \times \mathbb{R}$ with $\Lambda_- \times (-\infty, -r_0] \times \{0\}$, and get their concatenation in $X \times \mathbb{R}$. Denote it by $\Lambda_- \times \mathbb{R} \cup \widetilde{L}$. We can thus consider the sheaf and cosheaf of categories $\mu Sh_{\mathfrak{c}_X \cup \Lambda_- \times \mathbb{R} \cup \widetilde{L}}$.

    Since for any Lagrangian skeleton $\mathfrak{c}$, $\mu Sh_{\mathfrak{c}}$ is a sheaf and cosheaf of dg categories, we have a quasi-equivalence of sheaves
    $$\mu Sh_{\mathfrak{c}_X \cup \Lambda_- \times \mathbb{R} \cup \widetilde{L}}\big(\mathfrak{c}_X \cup \Lambda_- \times \mathbb{R} \cup \widetilde{L}\big) \xrightarrow{\sim} \mu Sh_{\mathfrak{c}_X \cup \Lambda_- \times \mathbb{R}}(\mathfrak{c}_X \cup \Lambda_- \times \mathbb{R}) \times_{\mu Sh_{\Lambda_-}(\Lambda_-)} \mu Sh_{\widetilde{L}}\big(\widetilde{L}\big).$$
    For a smooth Legendrian with Maslov data, we know by Theorem \ref{Gui} \cite[Theorem 11.5]{Gui} that $\mu Sh_{\Lambda_-}(\Lambda_-) \simeq Loc(\Lambda_-), \mu Sh_{\widetilde{L}}\big(\widetilde{L}\big) \simeq Loc(L)$. Hence we have a quasi-equivalence
    $$\mu Sh_{\mathfrak{c}_X \cup \Lambda_- \times \mathbb{R} \cup L}\big(\mathfrak{c}_X \cup \Lambda_- \times \mathbb{R} \cup \widetilde{L}\big) \xrightarrow{\sim} \mu Sh_{\mathfrak{c}_X \cup \Lambda_- \times \mathbb{R}}(\mathfrak{c}_X \cup \Lambda_- \times \mathbb{R}) \times_{Loc(\Lambda_-)} Loc(L).$$

    We construct an embedding functor (also explained in Section \ref{sheafquan} after Theorem \ref{NadShen})
    $$\mu Sh_{\mathfrak{c}_X \cup \Lambda_- \times \mathbb{R} \cup \widetilde{L}}\big(\mathfrak{c}_X \cup \Lambda_- \times \mathbb{R} \cup \widetilde{L}\big) \hookrightarrow \mu Sh_{\mathfrak{c}_X \cup \Lambda_+ \times \mathbb{R}}(\mathfrak{c}_X \cup \Lambda_+ \times \mathbb{R}).$$
    Consider the Liouville flow $\varphi_Z^z,\,z\in \mathbb{R}$, on $X$ for negative time, which will compress $\mathfrak{c}_X \cup \Lambda_- \times \mathbb{R} \cup \widetilde{L}$ onto $\mathfrak{c}_X \cup \Lambda_+ \times \mathbb{R}$ as $z \rightarrow -\infty$. The Liouville flow on $X$ extends to a contact Hamiltonian $\varphi_Z^z$ in $T^{*,\infty}N$ with
    $$d\varphi_Z^z/dz = t\partial/\partial t + Z_\lambda.$$
    Write $\phi_Z^\zeta = \varphi_Z^{\ln \zeta}$, and consider the Legendrian movie of $\mathfrak{c}_X \cup \Lambda_- \times \mathbb{R} \cup \widetilde{L}$ under the flow $\varphi_Z^z,\,z \in (-\infty,0]$, or $\phi_Z^\zeta,\,\zeta \in (0,1]$. Since $M \cup \Lambda_- \times \mathbb{R}$ is the Legendrian lift of a Lagrangian skeleton while $\widetilde{L}$ is the lift of an embedded Lagrangian, there are no self Reeb chords and the gapped condition automatically holds. By Theorem \ref{NadShen} \cite[Theorem 8.3]{NadShen}, the nearby cycle functor gives us a fully faithful embedding of microlocal sheaves on the Legendrian movie of $\mathfrak{c}_X \cup \Lambda_- \times \mathbb{R} \cup \widetilde{L}$ to sheaves on
    $$\lim_{z \rightarrow -\infty}\varphi_Z^z\big(\mathfrak{c}_X \cup \Lambda_- \times \mathbb{R} \cup \widetilde{L}\big) = \lim_{\zeta \rightarrow 0}\phi_Z^\zeta\big(\mathfrak{c}_X \cup \Lambda_- \times \mathbb{R} \cup \widetilde{L}\big) \subseteq \mathfrak{c}_X \cup \Lambda_+ \times \mathbb{R}.$$
    Thus we have a fully faithful embedding functor, and combining with the quasi-equivalence of microlocal sheaves this induces the functor
    $$\Phi_L: \, \mu Sh_{\mathfrak{c}_X \cup \Lambda_- \times \mathbb{R}}(\mathfrak{c}_X \cup \Lambda_- \times \mathbb{R}) \times_{Loc(\Lambda_-)} Loc(L) \hookrightarrow \mu Sh_{\mathfrak{c}_X \cup \Lambda_+ \times \mathbb{R}}(\mathfrak{c}_X \cup \Lambda_+ \times \mathbb{R}).$$

    When restricting to compact objects, for any Lagrangian skeleton $\mathfrak{c}$, $\mu Sh^c_{\mathfrak{c}}$ is a cosheaf of dg categories (Lemma \ref{cosheaf} \cite[Proposition 3.16]{NadWrapped}). Hence we get a quasi-equivalence
    $$\mu Sh_{\mathfrak{c}_X \cup \Lambda_- \times \mathbb{R} \cup L}^c\big(\mathfrak{c}_X \cup \Lambda_- \times \mathbb{R} \cup \widetilde{L}\big) \xrightarrow{\sim} \mu Sh_{\mathfrak{c}_X \cup \Lambda_- \times \mathbb{R}}^c(\mathfrak{c}_X \cup \Lambda_- \times \mathbb{R}) \otimes_{Loc^c(\Lambda_-)} Loc^c(L).$$
    Since $\Phi_L$ is fully faithful and the domain is closed under limits and colimits, it preserves limits and colimits. Hence there is a left adjoint functor that preserves compact objects
    $$\mu Sh_{\mathfrak{c}_X \cup \Lambda_+ \times \mathbb{R}}^c(\mathfrak{c}_X \cup \Lambda_+ \times \mathbb{R}) \longrightarrow \mu Sh_{\mathfrak{c}_X \cup \Lambda_- \times \mathbb{R} \cup \widetilde{L}}^c\big(\mathfrak{c}_X \cup \Lambda_- \times \mathbb{R} \cup \widetilde{L}\big).$$
    This proves the existence of the left adjoint $\Phi_L^*$ on the subcategories of compact objects
    $$\Phi_L^*: \, \mu Sh^c_{\mathfrak{c}_X \cup \Lambda_+ \times \mathbb{R}}(\mathfrak{c}_X \cup \Lambda_+ \times \mathbb{R}) \longrightarrow \mu Sh^c_{\mathfrak{c}_X \cup \Lambda_- \times \mathbb{R}}(\mathfrak{c}_X \cup \Lambda_- \times \mathbb{R}) \otimes_{Loc^c(\Lambda_-)} Loc^c(L).$$

    Finally, for Lagrangian sksleta $\mathfrak{c}$, by passing to (Theorem \ref{perfcompact} \cite{NadWrapped}*{Theorem 3.21})
    $$\mu Sh_\mathfrak{c}^b(\mathfrak{c}) = \mathrm{Fun}^\text{ex}(\mu Sh_\mathfrak{c}^c(\mathfrak{c})^\text{op}, \mathrm{Perf}(\Bbbk)),$$
    we get the second functor $\Phi_L$, which is just the restriction of the functor from (large) dg categories to the subcategories of proper objects. The full faithfulness of $\Phi_L$ follows from the full faithfulness of the embedding functor. This completes the proof. The special case when $X = T^*M$ follows from Lemma \ref{sheafmusheaf}.
\end{proof}
\begin{remark}
    The functor $\Phi_L$ can also be obtained in the setting of partially wrapped Fukaya categories. Indeed one can consider Weinstein manifolds with stops $(X, \Lambda_\pm)$ and view $T^*L$ as a Weinstein sector. First apply the cosheaf property of partially wrapped Fukaya categories \cite{GPS2}*{Theorem 1.27} to get
    $$\mathcal{W}(X, \Lambda_-) \otimes_{\mathcal{W}(T^*(\Lambda \times [-1, 1]))} \mathcal{W}(T^*L) \xrightarrow{\sim} \mathcal{W}(X \cup_{T^*(\Lambda \times [-1, 1])} T^*L)$$
    or in other words
    $$\mathcal{W}(X, \Lambda_-) \otimes_{Loc^c(\Lambda)} Loc^c(L) \xrightarrow{\sim} \mathcal{W}(X \cup_{T^*(\Lambda \times [-1, 1])} T^*L).$$
    Then one can view $X \cup_{T^*(\Lambda \times [-1, 1])} T^*L$ as a Liouville subsector of $(T^*X, \Lambda_+)$ (the compliment is a Liouville cobordism). Since $X \cup_{T^*(\Lambda \times [-1, 1])} T^*L$ is Weinstein, following \cite[Section 8.3]{GPS2} or \cite{SylvanOrlov} one can define a Viterbo restriction functor
    $$\mathcal{W}(X, \Lambda_+) \longrightarrow \mathcal{W}(X \cup_{T^*(\Lambda \times [-1, 1])} T^*L).$$
\end{remark}

\begin{remark}\label{sing-cob}
    In fact the main theorem works in more general settings, as long as the gapped condition in Definition \ref{NadShen} is satisfied. For example, when $i: L \hookrightarrow \partial_\infty X \times \mathbb{R}$ is an exact Lagrangian cobordism with vanishing action self intersection points, i.e.~for the primitive $i^*\lambda = df_L$, $f_L(x) = f_L(x')$ whenever $i(x) = i(x')$, then $L$ can be lifted to an immersed Legendrian with no Reeb chords and the theorem still holds. Similarly, when $\Lambda_\pm$ are subanalytic Legendrians and $L$ is the Lagrangian projection of a subanalytic Legendrian cobordism, the theorem still applies as long as the gapped condition holds.
\end{remark}

    Using the full faithfulness of $\Phi_L$ and the sheaf property, it is not hard to get all the exact triangles. The key tool is the following lemma.

\begin{lemma}
    Let $X$ be a Weinstein manifold with subanalytic skeleton $\mathfrak{c}_X$, and $\Lambda_-, \Lambda_+ \subset \partial_\infty X$ be Legendrian submanifolds. Let $L \subset \partial_\infty X \times \mathbb{R}$ be an exact Lagrangian cobordism from $\Lambda_-$ to $\Lambda_+$. Suppose there are sheaves $\mathscr{F}_-, \mathscr{G}_- \in \mu Sh^b_{\mathfrak{c}_X \cup \Lambda_- \times \mathbb{R}}(\mathfrak{c}_X \cup \Lambda_- \times \mathbb{R})$ which restrict to constant local systems along $\Lambda_-$, and their stalks at $\Lambda_-$ are $F, G$. Denoting by
    $$\mathscr{F}_+ = \Phi_L(\mathscr{F}_-), \,\,\, \mathscr{G}_+ = \Phi_L(\mathscr{G}_-),$$
    the images of $\mathscr{F}_-, \mathscr{G}_-$ glued with the constant local systems on $L$ with stalks $F$ and $G$, then there is a homotopy pullback diagram
    \[\xymatrix@R=8mm{
    \Gamma(\mu hom(\mathscr{F}_+, \mathscr{G}_+)) \ar[r] \ar[d] & \Gamma(\mu hom(\mathscr{F}_-, \mathscr{G}_-)) \ar[d] \\
    C^*(L; Hom(F, G)) \ar[r] & C^*(\Lambda_-; Hom(F, G)).
    }\]
\end{lemma}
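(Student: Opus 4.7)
The plan is to deduce the square directly from the full faithfulness of $\Phi_L$ (Theorem \ref{main}), combined with the standard description of the mapping complex in a homotopy fiber product of dg categories. By Theorem \ref{main}, the functor
$$\Phi_L: \mu Sh^b_{\mathfrak{c}_X \cup \Lambda_- \times \mathbb{R}}(\mathfrak{c}_X \cup \Lambda_- \times \mathbb{R}) \times_{Loc^b(\Lambda_-)} Loc^b(L) \hookrightarrow \mu Sh^b_{\mathfrak{c}_X \cup \Lambda_+ \times \mathbb{R}}(\mathfrak{c}_X \cup \Lambda_+ \times \mathbb{R})$$
is fully faithful. Writing $\delta_F, \delta_G \in Loc^b(L)$ for the trivial local systems with stalks $F, G$ (well-defined since the monodromies of $\mathscr{F}^-, \mathscr{G}^-$ along $\Lambda_-$ are trivial), by the definition of $\mathscr{F}^+, \mathscr{G}^+$ in the statement we have $\mathscr{F}^+ = \Phi_L(\mathscr{F}^-, \delta_F)$ and $\mathscr{G}^+ = \Phi_L(\mathscr{G}^-, \delta_G)$. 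Full faithfulness then yields
$$\Gamma(\mu hom(\mathscr{F}^+, \mathscr{G}^+)) \simeq \mathrm{Hom}_{(\,\cdot\,) \times_{Loc^b(\Lambda_-)} (\,\cdot\,)}\bigl((\mathscr{F}^-, \delta_F),\, (\mathscr{G}^-, \delta_G)\bigr).$$

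Next I would invoke the general fact that for any homotopy fiber product of dg categories $\mathcal{C} \times_\mathcal{D} \mathcal{E}$ over functors $F: \mathcal{C} \to \mathcal{D}$ and $G: \mathcal{E} \to \mathcal{D}$, the mapping complex between $(c, e)$ and $(c', e')$ is the homotopy pullback
$$\mathrm{Hom}_\mathcal{C}(c, c') \times^h_{\mathrm{Hom}_\mathcal{D}(F(c), F(c'))} \mathrm{Hom}_\mathcal{E}(e, e').$$
In our setting the three corner mapping complexes are, respectively, $\Gamma(\mu hom(\mathscr{F}^-, \mathscr{G}^-))$ by definition of $\mu hom$; $C^*(L; Hom(F, G))$ since $\delta_F, \delta_G$ are trivial local systems on $L$; and $C^*(\Lambda_-; Hom(F, G))$ since their further restrictions to $\Lambda_-$ are again trivial local systems. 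The two legs of the pullback are the categorical restriction functors: on the sheaf side, microlocalization of $\mu hom$ along $\Lambda_-$ followed by the equivalence $\mu Sh_{\Lambda_-} \simeq Loc_{\Lambda_-}$ from Theorem \ref{Gui}; on the local system side, ordinary pullback along $\Lambda_- \hookrightarrow L$. Assembling these identifications produces the desired square.

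The only step that really needs verification is the identification of these two legs with the geometrically natural restriction maps of cochain complexes. On the $L$ side this is essentially tautological, because $\delta_F|_{\Lambda_-}, \delta_G|_{\Lambda_-}$ are themselves trivial local systems on $\Lambda_-$ with stalks $F, G$, so the functor $Loc^b(L) \to Loc^b(\Lambda_-)$ induces the familiar restriction $C^*(L; Hom(F,G)) \to C^*(\Lambda_-; Hom(F,G))$. On the skeleton side one must use the trivial monodromy hypothesis: since $\mathscr{F}^-, \mathscr{G}^-$ have trivial monodromy along $\Lambda_-$ with stalks $F, G$, the microlocalization of $\mu hom(\mathscr{F}^-, \mathscr{G}^-)$ along the Legendrian stratum $\Lambda_-$ is the trivial local system with stalk $Hom(F, G)$, whose global sections compute $C^*(\Lambda_-; Hom(F, G))$. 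This is the sole place where the trivial-monodromy assumption is invoked, and it is the step I would expect to write out most carefully; once both legs are pinned down, the homotopy pullback square in the statement follows at once.
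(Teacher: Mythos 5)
Your proposal is correct and follows essentially the same route as the paper: the paper's proof also reduces to the glued object $\widetilde{\mathscr{F}}^+$ on $\mathfrak{c}_X \cup \Lambda_- \times \mathbb{R} \cup \widetilde{L}$, uses the sheaf property of $\mu Sh^b$ (equivalently, your mapping-complex formula for the homotopy fiber product of dg categories) to obtain the pullback square with corners $\Gamma(\mu hom(\mathscr{F}^-,\mathscr{G}^-))$, $C^*(L;Hom(F,G))$, $C^*(\Lambda_-;Hom(F,G))$, and then invokes full faithfulness of $\Phi_L$ to identify the remaining corner with $\Gamma(\mu hom(\mathscr{F}^+,\mathscr{G}^+))$. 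Your identification of the two legs, including where the trivial-monodromy hypothesis enters, matches the intended argument.
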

\begin{proof}
    Denote by $\widetilde{\mathscr{F}}_+, \widetilde{\mathscr{G}}_+$ the sheaves in $\mu Sh^b_{\mathfrak{c}_X \cup \Lambda_- \times \mathbb{R} \cup \widetilde{L}}\big(\mathfrak{c}_X \cup \Lambda_- \times \mathbb{R} \cup \widetilde{L}\big)$ obtained by gluing $\mathscr{F}_-, \mathscr{G}_-$ by the constant sheaf on $L$ with stalk $F$ and $G$. Then by the sheaf property of $\mu Sh^b_{\mathfrak{c}}$ for a Lagrangian sksleton $\mathfrak{c}$, we have a pullback diagram
    \[\xymatrix@R=8mm{
    \Gamma\big(\mu hom(\widetilde{\mathscr{F}}_+, \widetilde{\mathscr{G}}_+)\big) \ar[r] \ar[d] & \Gamma(\mu hom(\mathscr{F}_-, \mathscr{G}_-)) \ar[d] \\
    C^*(L; Hom(F, G)) \ar[r] & C^*(\Lambda_-; Hom(F, G)).
    }\]
    By full faithfulness of $\Phi_L$, we know that
    $$\Gamma\big(\mu hom(\widetilde{\mathscr{F}}_+, \widetilde{\mathscr{G}}_+)\big) \xrightarrow{\sim} \Gamma(\mu hom(\mathscr{F}_+, \mathscr{G}_+)).$$
    This proves the assertion.
\end{proof}

\begin{proof}[Proof of Corollary \ref{MVsequence}]
    The result immediately follows from the lemma.
\end{proof}

\begin{proof}[Proof of Corollary \ref{Exactsequence}]
    Note that the map $C^*(L; Hom(F, G)) \rightarrow C^*(\Lambda_-; Hom(F, G))$ fits into an exact triangle
    $$C^*(L; Hom(F, G)) \rightarrow C^*(\Lambda_-; Hom(F, G)) \rightarrow C^*(L, \Lambda_-; Hom(F, G))[1] \xrightarrow{+1}.$$
    Since a pullback diagram preserves (co)fibers, this gives the exact sequence
    $$\Gamma(\mu hom(\widetilde{\mathscr{F}}_+, \widetilde{\mathscr{G}}_+)) \rightarrow \Gamma(\mu hom(\mathscr{F}_-, \mathscr{G}_-))\rightarrow C^*(L, \Lambda_-; Hom(F, G))[1] \xrightarrow{+1},$$
    and hence completes the proof.
\end{proof}

\subsection{Concatenation and Invariance}\label{conca-inv}
    In this section we show some fundamental properties of the Lagrangian cobordism functor. We prove the second part of Theorem \ref{main} that concatenations of cobordisms give rise to compositions of cobordism functors. We also prove the invariance under compactly supported Hamiltonian isotopies.

    This section is a little bit technical and is not related to the rest of the paper, so the reader may feel free to skip it.

\subsubsection{Base change formula for push forward}
    For the proof of compatibility the results on concatenations of Lagrangian cobordisms, we need the commutativity criterion of compositions of nearby cycle functors in for example \cite{NadNearby} or \cite{KochNearby,MaiNearby}, while for the proof of Hamiltonian invariance of Lagrangian cobordisms, we need the commutativity of nearby cycles functors and Hamiltonian isotopy functors. We extract the main technical lemma as follows, which is a base change formula that does not hold in general.

    Write the projection maps
    $$\pi_i: N \times [-1,1] \times [-1,1] \rightarrow [-1,1], \,\, (x, t_1, t_2) \mapsto t_i, \,\,(i = 1, 2)$$
    and let $\pi = \pi_1 \times \pi_2: N \times [-1, 1] \times [-1, 1] \rightarrow [-1, 1] \times [-1, 1]$. Write the inclusions
    \[\xymatrix{
    N \times \{0\} \times [-1, 0) \cup (0, 1] \ar[r]^{i} \ar[d]_{j} & N \times [-1, 1] \times [-1, 0) \cup (0, 1] \ar[d]^{\overline{j}} \\
    N \times \{0\} \times [-1, 1] \ar[r]^{\overline{j}} & N \times [-1, 1] \times [-1, 1].
    }\]
    In our applications, all the sheaves are supported in $N \times [0, 1] \times [0, 1]$, but considering $N \times [-1, 1] \times [-1, 1]$ makes the proof cleaner by avoiding singular support estimates on manifolds with boundary. For a subset $A \subseteq T^*(N \times [-1, 1] \times [-1, 1])$, recall the definition of $i^\#(A) \subseteq T^*(N \times [-1, 1])$ in Section \ref{prelim-ss}.

\begin{proposition}\label{basechange}
    Let $\mathscr{F} \in Sh(N \times [-1,1] \times [-1, 0) \cup (0, 1])$ be a sheaf such that
    \begin{enumerate}
      \item $i^\#SS^\infty(\mathscr{F}) \cap \pi_2^*T^{*,\infty}([-1, 0) \cup (0, 1]) = \varnothing$,
      \item $SS^\infty(\mathscr{F}) \cap \pi^*T^{*,\infty}([-1, 0) \cup (0, 1] \times [-1, 0) \cup (0, 1]) = \varnothing$,
      \item $\overline{SS_{\pi}^\infty(\mathscr{F})} \cap T^{*,\infty}N \times \{(0, 0)\}$ is a subanalytic Legendrian.
    \end{enumerate}
    Then there is a natural isomorphism of sheaves
    $$\overline{i}^{-1}\overline{j}_*\mathscr{F} \simeq j_*i^{-1}\mathscr{F}.$$
\end{proposition}
\begin{remark}
    For our applications, we always have the stronger condition $SS^\infty(\mathscr{F}) \cap \pi^*T^{*,\infty}([-1,1] \times [-1, 0) \cup (0, 1]) = \varnothing$, in which case $i^\#SS^\infty(\mathscr{F}) \cap \pi_2^*T^{*,\infty}([-1, 0) \cup (0, 1]) = \varnothing$ follows immediately. We choose to state a more general result without assuming that because in general for commutativity of nearby cycles, when $\mathscr{F}$ is the push forward of $\mathscr{F}_0 \in Sh(N \times [-1, 0) \cup (0, 1] \times [-1, 0) \cup (0, 1])$ it might be difficult to check the stronger condition.
\end{remark}
\begin{remark}
    We remark that Condition~(3) is essential (even for weakly constructible sheaves). The following example is due to an anonymous referee. Consider $N = \mathbb{R}$, $S = \{(x, t_1, t_2) | t_1 = xt_2\} \subset N \times [-1, 1] \times [-1, 0) \cup (0, 1]$ and $\mathscr{F} = \Bbbk_S$. Then Condition~(3) does not hold and one can check that the base change formula does not hold.
\end{remark}

    We have a natural morphism $\overline{i}^{-1}\overline{j}_*\mathscr{F} \rightarrow j_*i^{-1}\mathscr{F}$ by adjunction. Since the natural morphism induces quasi-isomorphisms on stalks on $N \times \{0\} \times [-1, 0) \cup (0, 1]$, it suffices to show that the it also induces quasi-isomorphisms on stalks on $N \times \{(0, 0)\}$.

    First we compute the stalks of $\overline{i}^{-1}\overline{j}_*\mathscr{F}$ at $(x, 0, 0)$. The following lemma is basically \cite[Corollary 4.4]{NadShen}. Let $U_x$ be an open ball around $x \in N$, $D_{(0,0)}(\epsilon) = (-\epsilon, \epsilon) \times (-\epsilon, \epsilon)$, $D_{(0,0)}^\circ(\epsilon) = (-\epsilon, \epsilon) \times (-\epsilon, -\delta) \cup (\delta, \epsilon)$, and respectively $\overline{U}_x$, $\overline{D}_{(0,0)}(\epsilon)$ and $\overline{D}_{(0,0)}^\circ(\epsilon)$ be their closures.

\begin{lemma}\label{stalk1}
    Let $\mathscr{F} \in Sh(N \times [-1,1] \times [-1, 0) \cup (0, 1])$ be a sheaf so that $i^\#SS^\infty(\mathscr{F}) \cap \pi_2^*T^{*,\infty}([-1, 0) \cup (0, 1]) = \varnothing$, $SS^\infty(\mathscr{F}) \cap \pi^*T^{*,\infty}([-1, 0) \cup (0, 1] \times [-1, 0) \cup (0, 1]) = \varnothing$, and $\overline{SS_{\pi}^\infty(\mathscr{F})} \cap T^{*,\infty}N \times \{(0, 0)\}$ is a subanalytic Legendrian. Then for any $x \in N$, $U_x \subset N$ a sufficiently small open neighbourhood and $\epsilon > 0$ sufficiently small,
    $$\overline{j}_*\mathscr{F}_{(x,0,0)} \simeq \Gamma\big(\overline{U}_{x} \times \overline{D}_{(0,0)}^\circ(\epsilon), \mathscr{F}\big).$$
\end{lemma}
\begin{proof}
    Since $\overline{SS_{\pi}^\infty(\mathscr{F})} \cap T^{*,\infty}N \times \{(0, 0)\}$ is a subanalytic Legendrian, for any sufficiently small neighbourhood $U_{x}$ of $x \in N$, we have
    $$\overline{SS_{\pi}^\infty(\mathscr{F})} \cap \nu^{*,\infty}_{U_{x},\pm}N \times \{(0, 0)\} = \varnothing$$
    by general position argument.

    First consider $N \times [-1, 0) \cup (0, 1] \times [-1, 0) \cup (0, 1]$. Since $SS^\infty(\mathscr{F}) \cap \pi^*T^{*,\infty}([-1, 0) \cup (0, 1] \times [-1, 0) \cup (0, 1]) = \varnothing$, we can get a projection to the relative singular support in the relative cotangent bundle $SS^\infty(\mathscr{F}) \rightarrow SS_{\pi}^\infty(\mathscr{F})$ on $N \times [-1, 0) \cup (0, 1] \times [-1, 0) \cup (0, 1]$. Hence nonzero covectors project to nonzero covectors, i.e.~we get a map
    $$SS^\infty(\mathscr{F}) \cap \nu^{*,\infty}_{U_{x} \times D_{(0,0)}(\epsilon),\pm}(N \times (0,1] \times (0,1]) \rightarrow SS_{\pi}^\infty(\mathscr{F}) \cap \nu^{*,\infty}_{U_{x},\pm}N \times D_{(0, 0)}(\epsilon).$$
    Then consider $N \times \{0\} \times [-1, 0) \cup (0, 1]$. Recall the definition of $i^\#SS^\infty(\mathscr{F})$ in Section \ref{prelim-ss}. Write $D_0(\epsilon) = (-\epsilon, \epsilon)$. Since $\nu^{*,\infty}_{U_{x} \times \{0\} \times D_{0}(\epsilon),\pm}(N \times \{0\} \times [-1, 0) \cup (0, 1])$ only consists of covectors tangent to $N \times \{0\} \times [-1, 0) \cup (0, 1]$, we have an inclusion
    \begin{align*}
    SS^\infty(\mathscr{F}) &\cap \nu^{*,\infty}_{U_{x} \times \{0\} \times D_{0}(\epsilon),\pm}(N \times \{0\} \times [-1, 0) \cup (0, 1]) \\
    &\hookrightarrow i^\#SS^\infty(\mathscr{F}) \cap \nu^{*,\infty}_{U_{x} \times D_{0}(\epsilon),\pm}(N \times [-1, 0) \cup (0, 1]),
    \end{align*}
    Then by the assumption $i^\#SS^\infty(\mathscr{F}) \cap \pi_2^*T^{*,\infty}(0,1] = \varnothing$, we similarly get a map
    \begin{align*}
    i^\#SS^\infty(\mathscr{F}) & \cap \nu^{*,\infty}_{U_{x} \times D_{0}(\epsilon),\pm}(N \times [-1,0) \cup (0,1]) \\
    & \rightarrow SS_{\pi_2}^\infty(\mathscr{F}) \cap \nu^{*,\infty}_{U_x \times D_0(\epsilon), \pm}(N \times [-1,0) \cup (0,1]) \\
    & \rightarrow SS_{\pi}^\infty(\mathscr{F}) \cap \nu^{*,\infty}_{U_{x},\pm}N \times \{0\} \times D_{0}(\epsilon).
    \end{align*}
    Combining the two cases of $N \times [-1, 0) \cup (0, 1] \times [-1, 0) \cup (0, 1]$ and $N \times \{0\} \times [-1, 0) \cup (0, 1]$, we obtain a projection map
    $$SS^\infty(\mathscr{F}) \cap \nu^{*,\infty}_{U_{x} \times D_{(0,0)}(\epsilon),\pm}(N \times [-1,1] \times [-1, 0) \cup (0, 1]) \rightarrow SS_{\pi}^\infty(\mathscr{F}) \cap \nu^{*,\infty}_{U_{x},\pm}N \times D_{(0, 0)}(\epsilon).$$

    We claim that the right hand side is empty when $\epsilon > 0$ is sufficiently small. Otherwise, we can define a sequence in $SS_{\pi}^\infty(\mathscr{F}) \cap \nu^{*,\infty}_{U_{x},\pm}N \times D_{(0, 0)}(\epsilon)$ that converges to $\overline{SS_{\pi}^\infty(\mathscr{F})} \cap \nu^{*,\infty}_{U_{x},\pm}N \times \{(0, 0)\}$ as $\epsilon \rightarrow 0$. However, since $\overline{SS_{\pi}^\infty(\mathscr{F})} \cap \nu^{*,\infty}_{U_{x},\pm}N \times \{(0, 0)\} = \varnothing$, there are not any such sequences in the intersection of the relative singular support and unit conormal bundles. Therefore, when $\epsilon > 0$ is sufficiently small,
    $$SS_{\pi}^\infty(\mathscr{F}) \cap \nu^{*,\infty}_{U_{x},\pm}N \times D_{(0, 0)}(\epsilon) = \varnothing.$$
    Hence by the projection map we can conclude that for sufficiently small $\epsilon > 0$,
    $$SS^\infty(\mathscr{F}) \cap \nu^{*,\infty}_{U_{x} \times D_{(0,0)}(\epsilon),\pm}(N \times [-1,1] \times [-1, 0) \cup (0, 1]) = \varnothing.$$

    Consequently, by non-characteristic deformation lemma Proposition \ref{morselemma} applied to the family $U_x \times D_{(0,0)}(\epsilon)$ and $U_x \times D_{(0,0)}^\circ(\epsilon)$ for sufficiently small $\epsilon > 0$ and $\delta \ll \epsilon$, we can conclude that
    \begin{align*}
    \overline{j}_*\mathscr{F}_{(x,0,0)} &\simeq \Gamma\big(U_{x} \times D_{(0,0)}(\epsilon), \overline{j}_*\mathscr{F}\big) \simeq \Gamma\big(\overline{U}_{x} \times \overline{D}_{(0,0)}(\epsilon), \overline{j}_*\mathscr{F}\big)\\
    &\simeq \Gamma\big(U_{x} \times D_{(0,0)}^\circ(\epsilon), \mathscr{F}\big) \simeq \Gamma\big(\overline{U}_{x} \times \overline{D}_{(0,0)}^\circ(\epsilon), \mathscr{F}\big). \qedhere
    \end{align*}
\end{proof}

    Then we compute the stalks of $j_*i^{-1}\mathscr{F}$ at $(x, 0)$. Let $U_x$ be an open ball around $x \in N$, $D_0 = (-\epsilon, \epsilon)$, $D_0^\circ = (-\epsilon, -\delta) \cup (\delta, \epsilon)$, and respectively $\overline{U}_x$, $\overline{D}_0(\epsilon)$, $\overline{D}_0^\circ(\epsilon)$ be their closures.

\begin{lemma}\label{stalk2}
    Let $\mathscr{G} \in Sh(N \times [-1, 0)\cup(0, 1])$ be a sheaf such that $SS^\infty(\mathscr{G}) \cap \pi^*T^{*,\infty}([-1, 0)\cup (0,1]) = \varnothing$, and $\overline{SS_{\pi}^\infty(\mathscr{G})} \cap T^{*,\infty}N \times \{0\}$ is a subanalytic Legendrian. Then for any $x \in N$, $U_x \subset N$ a sufficiently small open neighbourhood and $\epsilon > 0$ sufficiently small,
    $$j_*\mathscr{G}_{(x,0)} \simeq \Gamma\big(\overline{U}_{x} \times \overline{D}_{0}^\circ(\epsilon), \mathscr{G}\big).$$
\end{lemma}
\begin{proof}
    Since $\overline{SS_{\pi}^\infty(\mathscr{G})} \cap T^{*,\infty}N \times \{0\}$ is a subanalytic Legendrian, for any sufficiently small neighbourhood $U_{x}$ of $x \in N $, we have
    $$\overline{SS_{\pi}^\infty(\mathscr{G})} \cap \nu^{*,\infty}_{U_{x},\pm}N \times \{0\} = \varnothing$$
    by general position argument. Since $SS^\infty(\mathscr{G}) \cap \pi^*T^{*,\infty}([-1,0)\cup(0,1]) = \varnothing$, we have a projection map to the relative singular support
    $$SS^\infty(\mathscr{G}) \cap \nu^{*,\infty}_{U_{x} \times D_{0}(\epsilon),\pm}(N \times [-1,0)\cup (0,1]) \rightarrow SS_{\pi}^\infty(\mathscr{G}) \cap \nu^{*,\infty}_{U_{x},\pm}N \times D_{0}(\epsilon).$$
    We know that there exist no sequences in the intersection of the relative singular support and unit conormal bundles that converge to $\overline{SS_{\pi}^\infty(\mathscr{G})} \cap \nu^{*,\infty}_{U_{x},\pm}N \times \{0\} = \varnothing$. Hence when $\epsilon > 0$ is sufficiently small, the intersection between relative singular support and $\nu^{*,\infty}_{U_{x},\pm}N \times D_0(\epsilon)$ is empty.

    Therefore, by non-characteristic deformation lemma Proposition \ref{morselemma} applied to the family $U_x \times D_{0}(\epsilon)$ and $U_x \times D_0^\circ(\epsilon)$, we have
    \begin{align*}
    j_*\mathscr{G}_{(x,0)} & \simeq \Gamma\big({U}_{x} \times {D}_0(\epsilon), j_*\mathscr{G}\big) \simeq \Gamma\big(\overline{U}_{x} \times \overline{D}_0(\epsilon), j_*\mathscr{G}\big) \\
    &\simeq \Gamma\big({U}_{x} \times {D}^{\circ}_0(\epsilon), \mathscr{G}\big) \simeq \Gamma\big(\overline{U}_{x} \times \overline{D}^{\circ}_0(\epsilon), \mathscr{G}\big). \qedhere
    \end{align*}
\end{proof}
\begin{remark}
    The above lemmas will also follow from the weak constructibility of $\mathscr{F}$ \cite[Lemma 4.2.2]{NadNearby}. For the applications, we believe that in fact both conditions hold.
\end{remark}

\begin{proof}[Proof of Proposition \ref{basechange}]
    We apply Lemma \ref{stalk1} to $\mathscr{F}$ and apply Lemma \ref{stalk2} and Theorem \ref{sspullback} to $i^{-1}\mathscr{F}$. Then it suffices to show that
    $$\Gamma\big(\overline{U}_{x} \times \overline{D}^{\circ}_{(0,0)}(\epsilon), \mathscr{F}\big) \simeq \Gamma\big(\overline{U}_{x} \times \overline{D}^{\circ}_0(\epsilon), \mathscr{F}\big).$$
    Since $\overline{SS_{\pi}^\infty(\mathscr{F})} \cap T^{*,\infty}N \times \{(0, 0)\}$ is a subanalytic Legendrian, for any sufficiently small neighbourhood $U_{x}$ of $x \in N$, we have
    $$\overline{SS_{\pi}^\infty(\mathscr{F})} \cap \nu^{*,\infty}_{U_{x},\pm}N \times \{(0, 0)\} = \varnothing$$
    by the general position argument. Write $D_{(0,0)}^\circ(\epsilon, \epsilon') = (-\epsilon', \epsilon') \times (\delta, \epsilon)$ for $0 \leq \epsilon' \leq \epsilon$. Since $SS^\infty(\mathscr{F}) \cap \pi^*T^{*,\infty}([-1, 0) \cup (0,1] \times [-1,0) \cup (0,1]) = \varnothing$, there is a projection map
    \begin{align*}
    SS^\infty(\mathscr{F})\, \cap &\, \nu_{{U}_{x} \times {D}_{(0,0)}^\circ(\epsilon,\epsilon'), \pm}^{*,\infty} (N \times [-1, 0) \cup (0, 1] \times [-1, 0)\cup (0, 1]) \\
    &\rightarrow SS^\infty_\pi(\mathscr{F}) \cap \nu_{{U}_{x},\pm}^{*,\infty}N \times \overline{D}_{(0,0)}^\circ(\epsilon,\epsilon').
    \end{align*}

    We claim that the right hand side is empty when $\epsilon > 0$ is sufficiently small. Otherwise, we can define a sequence in $SS^\infty_\pi(\mathscr{F}) \cap \nu_{{U}_{x},\pm}^{*,\infty}N \times \overline{D}_{(0,0)}^\circ(\epsilon,\epsilon')$ that converges to $\overline{SS_\pi(\mathscr{F})} \cap \nu_{{U}_{x},\pm}^{*,\infty}N \times \{(0, 0)\}$ as $\epsilon', \epsilon \rightarrow 0$. However, since $\overline{SS_\pi(\mathscr{F})} \cap \nu_{{U}_{x},\pm}^{*,\infty}N \times \{(0, 0)\} = \varnothing$, there are not any such sequences in the intersection of the relative singular support and unit conormal bundles. Hence by the projection map we conclude that when $\epsilon, \epsilon' > 0$ are sufficiently small,
    $$SS^\infty(\mathscr{F}) \cap \nu_{{U}_{x} \times {D}_{(0,0)}^\circ(\epsilon,\epsilon'), \pm}^{*,\infty}(N \times [-1,0) \cup (0, 1] \times [-1,0)\cup (0, 1]) = \varnothing. $$

    Therefore, by non-characteristic deformation lemma Proposition \ref{morselemma} applied to the family $D_{(0,0)}^\circ(\epsilon, \epsilon')$ for $0 < \epsilon' \leq \epsilon$, we can conclude that
    \begin{equation*}
    \Gamma\big(\overline{U}_{x} \times \overline{D}_{(0,0)}^\circ(\epsilon), \mathscr{F}\big) \simeq \Gamma\big(\overline{U}_{x} \times \overline{D}_0^\circ(\epsilon, \epsilon'), \mathscr{F}\big) \simeq \Gamma\big(\overline{U}_{x} \times \overline{D}_0^\circ(\epsilon), \mathscr{F}\big). \qedhere
    \end{equation*}
\end{proof}
\begin{remark}
    When applying non-characteristic deformation lemma, one should notice that $\partial (\overline{U}_{x} \times \overline{D}_{(0,0)}^\circ(\epsilon))$ is piecewise smooth. Therefore, we need to use the condition that $SS^\infty(\mathscr{F}) \cap \pi^*T^{*,\infty}([-1,0)\cup (0,1] \times [-1,0)\cup (0,1]) = \varnothing$ rather than only considering the intersection with $\pi_i^*T^{*,\infty}([-1,0)\cup (0,1])$. For the same reason, we need the estimate on $\overline{SS^\infty_\pi(\mathscr{F})} \cap T^{*,\infty}N \times \{(0, 0)\}$ rather than estimates on $\overline{SS^\infty_{\pi_i}(\mathscr{F})} \cap T^{*,\infty}N \times \{(0, 0)\}$. The author is grateful to an anonymous referee for pointing out the mistake in the proposition.
\end{remark}

\subsubsection{Concatenation and composition}
    First, we show that concatenations of Lagrangian cobordisms give rise to compositions of our Lagrangian cobordism functors. Therefore our cobordism functor defines a functor from the category of Lagrangian cobordisms to the category of (small) dg categories.

    We recall how concatenations of Lagrangian cobordisms are defined. Let $L_0 \subset \partial_\infty X \times \mathbb{R}$ be a Lagrangian cobordism from $\Lambda_0$ to $\Lambda_1$, and $L_1$ be a Lagrangian cobordism from $\Lambda_1$ to $\Lambda_2$. Suppose $L_{0,1} \cap \partial_\infty X \times (-\infty,-r_0) \cup (r_0,+\infty)$ are standard cylinders. Then the concatenation $L_0 \cup L_1$ is an exact Lagrangian such that
\begin{enumerate}
  \item $(L_0 \cup L_1) \cap \partial_\infty X \times (-\infty,0) \cong \varphi_Z^{-r_0}(L_0) \cap \partial_\infty X \times (-\infty,0)$;
  \item $(L_0 \cup L_1) \cap \partial_\infty X \times (0,+\infty) \cong \varphi_Z^{r_0}(L_1) \cap \partial_\infty X \times (0,+\infty)$.
\end{enumerate}
    Here $\varphi_Z^z$ is the Liouville flow on $\partial_\infty X \times \mathbb{R} \subset X$.

\begin{theorem}[Concatenation]\label{concate}
    Let $X$ be a Weinstein manifold, $\Lambda_{0,1,2} \subset \partial_\infty X$ be Legendrian submanifolds, $L_0 \subset \partial_\infty X \times \mathbb{R}$ be a Lagrangian cobordism from $\Lambda_0$ to $\Lambda_1$, and $L_1$ be a Lagrangian cobordism from $\Lambda_1$ to $\Lambda_2$. Then
    $$\Phi_{L_0 \cup L_1}^* \simeq (\Phi_{L_0}^* \otimes \mathrm{id}_{Loc^c(L_1)}) \circ \Phi_{L_1}^*, \,\,\, \Phi_{L_0 \cup L_1} \simeq \Phi_{L_1} \circ (\Phi_{L_0} \times \mathrm{id}_{Loc^b(L_1)}).$$
\end{theorem}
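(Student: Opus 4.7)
The plan is to realize $\Phi_{L_0 \cup L_1}$ and $\Phi_{L_1} \circ (\Phi_{L_0} \times \mathrm{id})$ as nearby-cycle embeddings coming from two Hamiltonian-isotopic contact flows, and then conclude by the parametric invariance of the nearby cycle functor (Remarks \ref{GKSpara} and \ref{cont-trans-para}).

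First I identify the sources and build a two-stage flow. The Legendrian lift $\widetilde{L_0 \cup L_1}$ is the union of $\widetilde{L_0}$ and $\widetilde{L_1}$ glued along a piece of the cylinder $\widetilde{\Lambda_1 \times \mathbb{R}}$, using the shift in the symplectization coordinate that defines the concatenation, so the cosheaf property (Lemma \ref{cosheaf}) together with Theorem \ref{Gui} yields
\[
\mu Sh^c_{\mathfrak{c}_X \cup \Lambda_0 \times \mathbb{R} \cup \widetilde{L_0 \cup L_1}} \;\simeq\; \mu Sh^c_{\mathfrak{c}_X \cup \Lambda_0 \times \mathbb{R}} \otimes_{Loc^c(\Lambda_0)} Loc^c(L_0) \otimes_{Loc^c(\Lambda_1)} Loc^c(L_1),
\]
matching the source of $(\Phi_{L_0}^* \otimes \mathrm{id}_{Loc^c(L_1)}) \circ \Phi_{L_1}^*$ via $Loc^c(L_0 \cup L_1) \simeq Loc^c(L_0) \otimes_{Loc^c(\Lambda_1)} Loc^c(L_1)$; the proper side follows dually by Theorem \ref{perfcompact}. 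I next choose contact Hamiltonians $H_0, H_1$ on $X \times \mathbb{R}$ with disjoint supports in the symplectization coordinate $r$, such that $\varphi_{H_0}^{z}$ for $z \to -\infty$ compresses $\widetilde{L_0}$ onto $\Lambda_1 \times \mathbb{R}$ while fixing $\widetilde{L_1}$, and $\varphi_{H_1}^{z}$ compresses $\widetilde{L_1}$ onto $\Lambda_2 \times \mathbb{R}$ while fixing the rest. The two flows commute because of disjointness, and applying Theorem \ref{NadShen} twice produces a fully faithful embedding whose value is precisely $\Phi_{L_1} \circ (\Phi_{L_0} \times \mathrm{id}_{Loc^b(L_1)})$ after the cosheaf identification at the intermediate stage.

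Second I compare this two-stage construction with $\Phi_{L_0 \cup L_1}$, which is built from the full Liouville flow $\varphi_Z^z$. Both flows take $\mathfrak{c}_X \cup \Lambda_0 \times \mathbb{R} \cup \widetilde{L_0 \cup L_1}$ to $\mathfrak{c}_X \cup \Lambda_2 \times \mathbb{R}$ in the limit $z \to -\infty$, and they can be joined by a one-parameter family of gapped contact isotopies obtained by linearly interpolating the truncated Hamiltonian $H_0 + H_1$ with the full Liouville contact Hamiltonian $H_Z$. The parametric versions of Theorems \ref{GKS} and \ref{cont-trans} (Remarks \ref{GKSpara} and \ref{cont-trans-para}), which propagate through the antimicrolocalization construction in Theorem \ref{antimicro} and the nearby cycle criterion in Theorem \ref{nearby} used in the proof of Theorem \ref{NadShen}, then give a natural equivalence of the two resulting nearby cycle embeddings. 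This yields $\Phi_{L_0 \cup L_1} \simeq \Phi_{L_1} \circ (\Phi_{L_0} \times \mathrm{id})$, and the statement for $\Phi^*$ follows by taking left adjoints.

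The main technical hurdle is verifying the gapped condition uniformly along the interpolating family, particularly in the transition region around $\Lambda_1$ where a short Reeb chord between the moving image of $\widetilde{L_0}$ and $\widetilde{L_1}$ could in principle develop. This is handled by taking the cutoffs defining $H_0, H_1$ supported in sufficiently separated $r$-slabs, so that Reeb chord lengths remain uniformly bounded below throughout the interpolation, with a small generic perturbation inserted near $\Lambda_1$ if needed.
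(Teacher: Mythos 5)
Your overall strategy — realize both functors as nearby-cycle embeddings along flows and compare by a family interpolating between them — is aligned in spirit with the paper's proof, but there are two substantive gaps.

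First, the "disjoint-support Hamiltonians" picture does not quite work. To compress $\widetilde{L_0}$ onto $\mathfrak{c}_X \cup \Lambda_1 \times \mathbb{R}$ while fixing $\widetilde{L_1}$ you need the flow to agree with the Liouville flow on $\partial_\infty X \times (-\infty,-\epsilon)$ and be the identity on $\partial_\infty X \times (\epsilon,+\infty)$, i.e.\ a cut-off $Z' = \eta(r)Z_\lambda$ of the full Liouville vector field; and in the second stage you must then run the \emph{full} Liouville flow (acting everywhere, not in a disjoint region) to push $\mathfrak{c}_X \cup \Lambda_1\times\mathbb{R}\cup\widetilde{L_1}$ down to $\mathfrak{c}_X \cup \Lambda_2\times\mathbb{R}$. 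These two flows do not commute by support disjointness; what the paper instead does is combine them into the two-parameter family $\phi_{\overline{Z}'}^{\zeta,\eta}=\phi_Z^{\zeta}\circ\phi_{Z'}^{\eta-\zeta}$ over the triangle $\Delta=\{0<\eta\leq\zeta\leq 1\}$, whose diagonal $\zeta=\eta$ is the pure Liouville flow ($\Phi_{L_0\cup L_1}$) and whose boundary path through $\zeta=1$ then $\eta=0$ encodes the staged compression ($\Phi_{L_1}\circ(\Phi_{L_0}\times\mathrm{id})$).

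Second and more importantly, the step where you invoke "parametric versions of Theorems \ref{GKS} and \ref{cont-trans} propagating through the nearby cycle construction" is asserting precisely what needs to be proved. Remarks \ref{GKSpara} and \ref{cont-trans-para} concern honest parametric families of Hamiltonian isotopies, i.e.\ the categories of sheaves along moving Legendrians; they do \emph{not} directly say that the nearby cycle functor $i^{-1}j_*$, built from a flow that degenerates as $\zeta\to 0$, is compatible with restricting to a slice of the parameter family. The paper introduces Lemma \ref{basechange} — a special base change $\overline{i}^{-1}\overline{j}_*\mathscr{F}\simeq j_*i^{-1}\mathscr{F}$ under gappedness and relative singular support hypotheses, proved via the microlocal Morse lemma — exactly to justify that restricting a parameter commutes with the nearby-cycle limit, and then uses it repeatedly to compare the corner value $\overline{i}^{-1}\overline{j}_*\Psi_{\overline{Z}'}^{\zeta,\eta}(\mathscr{F})_{\mathrm{dbl}}$ along the diagonal versus along the two edges of $\Delta$. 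Your proposal has no analogue of this lemma, and without it the comparison between the two nearby-cycle functors is not established. The gapped-condition issue you raise is real but secondary; the missing base change is the crux.

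Finally, a small remark: once the $\Phi_L$-side identity is established on the big categories $\mu Sh(-)$, the $\Phi^*_L$-side does follow by passing to left adjoints and compact objects, as you say; that part is fine.
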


    We will consider the (large) dg categories $\mu Sh_{\mathfrak{c}_X \cup \Lambda_{0,1,2} \times \mathbb{R}}(\mathfrak{c}_X \cup \Lambda_{0,1,2} \times \mathbb{R})$ and $Loc(L_{0,1})$, and show that
    \[\begin{split}
    \Phi_{L_0 \cup L_1} \simeq \,&\Phi_{L_1} \circ (\Phi_{L_0} \times \mathrm{id}_{Loc(L_1)}): \\
    & \, \mu Sh_{\mathfrak{c}_X \cup \Lambda_0 \times \mathbb{R}}(\mathfrak{c}_X \cup \Lambda_0 \times \mathbb{R}) \times_{Loc(\Lambda_0)} Loc(L_0) \times_{Loc(\Lambda_1)} Loc(L_1) \\
    & \rightarrow \mu Sh_{\mathfrak{c}_X \cup \Lambda_2 \times \mathbb{R}}(\mathfrak{c}_X \cup \Lambda_2 \times \mathbb{R}).
    \end{split}\]
    Then the results will immediately follow from the properties of adjoint functors.

    Our strategy is as follows. $\Phi_{L_0 \cup L_1}$ is defined by using the Liouville flow to compress $L_0 \cup L_1$ to the skeleton all at once, and $\Phi_{L_1} \circ (\Phi_{L_0} \times \mathrm{id}_{Loc(L_1)})$ is defined by first compressing $L_0$ to the skeleton while fixing $L_1$, and next compressing $L_1$ to the skeleton. We will try to define a 2-parametric family of contact flow that interpolates between them. Then following the construction, $\Phi_{L_0 \cup L_1}$ and $\Phi_{L_1} \circ (\Phi_{L_0} \times \mathrm{id}_{Loc(L_1)})$ are two different compositions of nearby cycles, and the theorem is reduced to commutativity of the nearby cycle functors.

    We now start the proof of the theorem. Consider the lifting of the Liouville flow $\varphi_Z^z$ in $T^{*,\infty}N$ that satisfies
    $$d\varphi_Z^z/dz = t\partial/\partial t + Z_\lambda$$
    on $X \times \mathbb{R}$. Suppose that the concatenation $(L_0 \cup L_1) \cap \partial_\infty X \times (-\epsilon,\epsilon) = \Lambda_1 \times (-\epsilon,\epsilon).$ Let $\eta: \mathbb{R} \rightarrow [0,1]$ be a cut-off function such that $\eta|_{(-\infty,-\epsilon]} \equiv 0$ and $\eta|_{[\epsilon,+\infty)} \equiv 0$. Then we consider a flow $\varphi_{Z'}^z$ on $\partial_\infty X \times \mathbb{R}$ defined by $Z' = \eta(r)Z_\lambda = \eta(r)e^r\partial/\partial r$, such that
    $$\varphi_{Z'}^z|_{\partial_\infty X \times (-\infty,-\epsilon)} = \varphi_Z^z, \,\,\, \varphi_{Z'}^z|_{\partial_\infty X \times (\epsilon,+\infty)} = \mathrm{id}.$$
    Note that $\varphi_{Z'}^z$ defines an exact Lagrangian isotopy of $L_0 \cup L_1$, which can be lifted to a Legendrian isotopy of $\widetilde{L}_0 \cup \widetilde{L}_1$. Therefore, lift $\varphi_{Z'}^z$ to a contact flow on $X \times \mathbb{R}$ and still denote it by $\varphi_{Z'}^z$. As a contact flow,
    $$d\varphi_{Z'}^z/dz|_{\partial_\infty X \times (-\infty,-\epsilon) \times \mathbb{R}} = t\partial/\partial t + Z_\lambda.$$

    Write $\phi_Z^\zeta = \varphi_Z^{\ln\zeta}$ and $\phi_{Z'}^\zeta = \varphi_{Z'}^{\ln\zeta}$. Consider the 2-parameter family of contact Hamiltonian $\phi_{\overline{Z}'}^{\zeta,\eta} = \phi_Z^{\zeta} \circ \phi_{Z'}^{\eta - \zeta}$. Then $\phi_{\bar{Z}'}^{\zeta,\zeta} = \varphi_Z^{\zeta}, \,\,\, \phi_{\bar{Z}'}^{1,\eta} = \varphi_{Z'}^{\eta}.$ In particular, the limits satisfy
    \[\begin{split}
    \lim_{\zeta \rightarrow 0}\phi_{\bar{Z}'}^{\zeta,\zeta}(-) = \lim_{\zeta \rightarrow 0}\phi_Z^\zeta(-) &= \lim_{z \rightarrow -\infty}\varphi_Z^z(-), \\
    \lim_{\eta \rightarrow 0}\phi_{\bar{Z}'}^{\zeta,\eta}(-) = \phi_Z^\zeta \Big(\lim_{\eta \rightarrow 0}\phi_{Z'}^\eta(-)\Big) &= \phi_Z^{\zeta}\Big(\lim_{y \rightarrow -\infty}\varphi_{Z'}^y(-)\Big).
    \end{split}\]
    Write $\Delta = \{(\zeta, \eta) | 0 < \eta \leq \zeta \leq 1\}$, $\overline{\Delta} = \{(\zeta, \eta) | 0\leq \eta\leq \zeta\leq 1\}$ and $\overline{\Delta}_0 = \overline{\Delta} \backslash \{(0,0)\}$.

\begin{figure}
  \centering
  \includegraphics[width=0.7\textwidth]{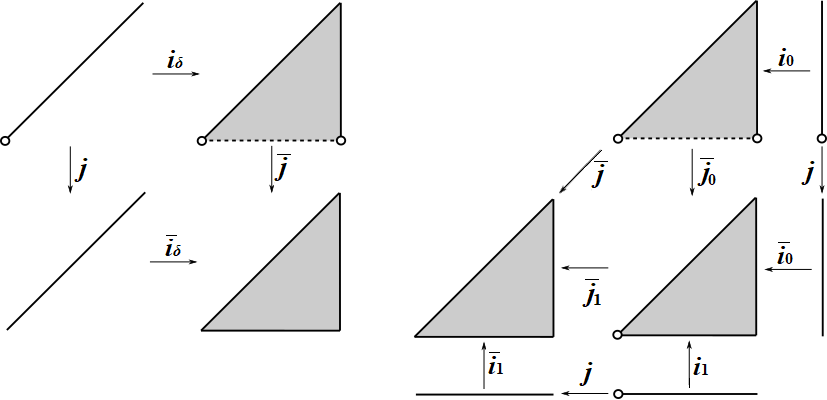}\\
  \caption{The diagram of maps in the proof of Theorem \ref{concate}.}\label{basechange-fig}
\end{figure}

\begin{proof}[Proof of Theorem \ref{concate}]
    Consider the 2-parameter family of contact flows $\phi_{\bar{Z}'}^{\zeta,\eta},\,(\zeta, \eta) \in \Delta$. By Theorem \ref{cont-trans} Remark \ref{cont-trans-para}, for $\mathscr{F} \in \mu Sh_{\mathfrak{c}_X \cup \Lambda_0 \times \mathbb{R} \cup \widetilde{L}_0 \cup \widetilde{L}_1}(\mathfrak{c}_X \cup \Lambda_0 \times \mathbb{R} \cup \widetilde{L}_0 \cup \widetilde{L}_1)$, we get
    $$\Psi_{\bar{Z}'}^{\zeta,\eta}(\mathscr{F}) \in \mu Sh_{(\mathfrak{c}_X \cup \Lambda_0 \times \mathbb{R} \cup \widetilde{L}_0 \cup \widetilde{L}_1)_{\bar{Z}'}}\big((\mathfrak{c}_X \cup \Lambda_0 \times \mathbb{R} \cup \widetilde{L}_0 \cup \widetilde{L}_1)_{\bar{Z}'}\big),$$
    where $(\mathfrak{c}_X \cup \Lambda_0 \times \mathbb{R} \cup \widetilde{L}_0 \cup \widetilde{L}_1)_{\bar{Z}'}$ is the Legendrian movie of $\mathfrak{c}_X \cup \Lambda_0 \times \mathbb{R} \cup \widetilde{L}_0 \cup \widetilde{L}_1$ under $\phi_{\bar{Z}'}^{\zeta,\eta}$ in Definition \ref{lagmovie}. Applying Theorem \ref{antimicro} \cite{NadShen}, we will write $\Psi_{\bar{Z}'}^{\zeta,\eta}(\mathscr{F})_\text{dbl} \in Sh(N \times \Delta)$
    for the image of $\Psi_{\bar{Z}'}^{\zeta,\eta}(\mathscr{F})$ under the antimicrolocalization functor.

    From Figure \ref{basechange-fig} one can notice that $\Phi_{L_0 \cup L_1}$ and $\Phi_{L_0} \circ (\Phi_{L_0} \times \mathrm{id}_{Loc(L_1)})$ are (compositions of) nearby cycles along different boundary edges of $\overline{\Delta}$. Therefore it suffices to show that the nearby cycle functors commute and they agree with the 2-parametric nearby cycle functor. In order to apply Proposition \ref{basechange} in our argument, note that firstly $SS^\infty(\Psi_{\bar{Z}'}^{\zeta,\eta}(\mathscr{F})) \cap \pi^*T^{*,\infty}\Delta = \varnothing$ because the singular support is the Legendrian movie under a contact Hamiltonian flow, and secondly the limit points of the relative singular support
    $$\overline{SS_{\pi \phantom{\bar{1}}}^{\infty}(\Psi_{\bar{Z}'}^{\zeta,\eta}(\mathscr{F})_\text{dbl})} \cap T^{*,\infty}N \times \{(0, 0)\} \subseteq \lim_{\eta, \zeta \rightarrow 0}\phi_{\bar{Z}'}^{\zeta,\eta}(\mathfrak{c}_X \cup \Lambda_0 \times \mathbb{R} \cup \widetilde{L}_0 \cup \widetilde{L}_1) \subseteq \mathfrak{c}_X \cup \Lambda_2 \times \mathbb{R}$$
    form a subanalytic Legendrian. Therefore, in the following cases Proposition \ref{basechange} will apply.

    (1)~Firstly, we consider $\Phi_{L_0 \cup L_1}\left(\mathscr{F}\right)$ (Figure \ref{basechange-fig} left). Write $i_\delta: N \times (0,1] \hookrightarrow N \times \Delta, \, (x, \zeta) \mapsto (x, \zeta, \zeta)$, $j: N \times (0,1] \hookrightarrow N \times [0,1]$ and $i: N \times \{0\} \hookrightarrow N \times [0,1]$. Then since $\phi_{\overline{Z}'}^{\zeta,\zeta} = \phi_Z^{\zeta}$,
    \[\begin{split}
    \Phi_{L_0 \cup L_1}\left(\mathscr{F}\right)_\text{dbl} \xrightarrow{\sim} i^{-1}j_*\Psi_Z^\zeta(\mathscr{F})_\text{dbl} \xrightarrow{\sim} i^{-1}j_*\big(i_\delta^{-1}\Psi_{\overline{Z}'}^{\zeta,\eta}(\mathscr{F})\big)_\text{dbl}.
    \end{split}\]
    Write $\overline{i}_\delta: N \times [0,1] \hookrightarrow N \times \overline{\Delta}, \, (x, \zeta) \mapsto (x, \zeta, \zeta)$, $\overline{j}: N \times \Delta \rightarrow N \times \overline{\Delta}$ and $\overline{i}: N \times \{(0,0)\} \hookrightarrow N \times \overline{\Delta}$. By Proposition \ref{basechange} and Remark \ref{GKSviaCont}, we know that in fact
    $$\Phi_{L_0 \cup L_1}\left(\mathscr{F}\right)_\text{dbl} \xrightarrow{\sim} i^{-1}\overline{i}_\delta^{-1}\overline{j}_*\Psi_{\overline{Z}'}^{\zeta,\eta}(\mathscr{F})_\text{dbl} \xrightarrow{\sim} \overline{i}^{-1}\overline{j}_*\Psi_{\overline{Z}'}^{\zeta,\eta}(\mathscr{F})_\text{dbl}.$$

    (2)~Secondly, we consider $(\Phi_{L_0} \times \mathrm{id}_{Loc(L_1)})(\mathscr{F})$ (Figure \ref{basechange-fig} right). Note that $\varphi_{Z'}^y$ preserves $L_1$, and compresses $\Lambda_0 \times \mathbb{R} \cup L_0$ to $\mathfrak{c}_X \cup \Lambda_1 \times \mathbb{R}$ as the Liouville flow $\varphi_Z^y$. Thus the restriction of $i^{-1}j_*\Psi_{Z'}^\eta(\mathscr{F})_\text{dbl}$ to $\mathfrak{c}_X \cup \Lambda_1 \times \mathbb{R}$ is $\Phi_{L_0}(\mathscr{F})_\text{dbl} = i^{-1}j_*\Psi_{Z}^\eta(\mathscr{F})_\text{dbl}$. On the other hand, by Theorem \ref{cont-trans} the microlicalization of $i^{-1}j_*\Psi_{Z'}^\eta(\mathscr{F})_\text{dbl}$ to $\widetilde{L}_1$ gives an equivalence $\mu Sh_{\widetilde{L}_1}(\widetilde{L}_1) \simeq \mu Sh_{\widetilde{L}_1 \times [0,1]}(\widetilde{L}_1 \times [0,1]) \simeq \mu Sh_{\widetilde{L}_1}(\widetilde{L}_1)$ induced by the Reeb translation. By Theorem \ref{Gui} \cite{Gui} this is the identity functor on $Loc(L_1)$. Therefore,
    $$(\Phi_{L_0} \times \mathrm{id}_{Loc(L_1)})(\mathscr{F})_\text{dbl} \xrightarrow{\sim} i^{-1}j_*\Psi_{Z'}^\eta(\mathscr{F})_\text{dbl}.$$
    Write $i_0: N \times (0,1] \hookrightarrow N \times \Delta, \, (x, \eta) \mapsto (x, 1, \eta)$. Since $\phi_{\overline{Z}'}^{1,\eta} = \phi_{Z'}^\eta$, we know that
    $$(\Phi_{L_0} \times \mathrm{id}_{Loc(L_1)})(\mathscr{F})_\text{dbl} \xrightarrow{\sim} i^{-1}j_*\Psi_{Z'}^\eta(\mathscr{F})_\text{dbl} \xrightarrow{\sim} i^{-1}j_*\big(i_0^{-1}\Psi_{\bar{Z}'}^{\zeta,\eta}(\mathscr{F})\big)_\text{dbl}.$$
    Write $\overline{j}_0: N \times \Delta \hookrightarrow N \times \overline{\Delta}_0$ where $\overline{\Delta}_0 = \overline{\Delta} \backslash \{(0,0)\}$, and $\overline{i}_0: N \times [0,1] \hookrightarrow N \times \overline{\Delta}, \, (x, \eta) \mapsto (x, 1, \eta)$. By Proposition \ref{basechange} and Remark \ref{GKSviaCont}, we know that in fact
    $$(\Phi_{L_0} \times \mathrm{id}_{Loc(L_1)})(\mathscr{F})_\text{dbl} \xrightarrow{\sim} i^{-1}\overline{i}_0^{-1}\overline{j}_{0,*}\Psi_{\bar{Z}'}^{\zeta,\eta}(\mathscr{F})_\text{dbl}.$$

    Then we consider $\Phi_{L_1} \circ (\Phi_{L_0} \times \mathrm{id}_{Loc(L_1)})(\mathscr{F})$ (Figure \ref{basechange-fig} right). Write $i_1: N \times (0,1]\hookrightarrow N \times \overline{\Delta}_0, \, (x, \zeta) \mapsto (x, \zeta, 0)$ where $\overline{\Delta}_0 = \overline{\Delta} \backslash \{(0,0)\}$. Let $\varphi_{\overline{Z}}^z$ be the Liouville flow on $T^{*,\infty}(N \times [0,1])$ defined by the pullback (homogeneous) Hamiltonian $H_{\bar{Z}} = H_Z \circ \pi_{T^{*}N}$, and $\phi_{\bar{Z}}^\zeta = \varphi_{\bar{Z}}^{\ln\zeta}$. Let $\Psi_{\bar{Z}}^\zeta: \, Sh(N \times \{1\} \times [0,1]) \rightarrow Sh(N \times \overline{\Delta}_0)$ be the Hamiltonian isotopy functor as in Theorem \ref{GKS}. Thus by Proposition \ref{basechange}
    \[\begin{split}
    \big(\Psi_Z^\zeta \circ (\Phi_{L_0} \times \mathrm{id}_{Loc(L_1)})(\mathscr{F})\big)_\text{dbl} &\xrightarrow{\sim} \Psi_Z^\zeta\big(i^{-1}\overline{i}_0^{-1}\overline{j}_{0,*} \Psi_{\bar{Z}'}^{\zeta,\eta}(\mathscr{F})_\text{dbl}\big) \\
    &\xrightarrow{\sim} i_1^{-1}\Psi_{\bar{Z}}^\zeta\big(\overline{i}_0^{-1}\overline{j}_{0,*} \Psi_{\bar{Z}'}^{\zeta,\eta}(\mathscr{F})_\text{dbl}\big)
    \xrightarrow{\sim} i_1^{-1}\overline{j}_{0,*}\Psi_{\bar{Z}'}^{\zeta,\eta}(\mathscr{F})_\text{dbl}.
    \end{split}\]
    Then write $\overline{j}_1: N \times \overline{\Delta}_0 \hookrightarrow N \times \overline{\Delta}$ and $\overline{i}_1: N \times [0, 1] \hookrightarrow N \times \overline{\Delta}$. By Proposition \ref{basechange} again, we can show that
    \[\begin{split}\Phi_{L_1} \circ (\Phi_{L_0} \times \mathrm{id}_{Loc(L_1)})(\mathscr{F})_\text{dbl} & \xrightarrow{\sim} i^{-1}j_*\big(\Psi_Z^\zeta \circ (\Phi_{L_0}^* \times \mathrm{id}_{Loc(L_1)})(\mathscr{F})\big)_\text{dbl} \\
    & \xrightarrow{\sim} i^{-1}j_*i_1^{-1}\overline{j}_{0,*}\Psi_{\bar{Z}'}^{\zeta,\eta}(\mathscr{F})_\text{dbl}
    \xrightarrow{\sim} i^{-1}\overline{i}_1^{-1}\overline{j}_{1,*}\overline{j}_{0,*} \Psi_{\bar{Z}'}^{\zeta,\eta}(\mathscr{F})_\text{dbl} \\
    & \xrightarrow{\sim} i^{-1}\overline{i}_1^{-1}\overline{j}_{*}\Psi_{\bar{Z}'}^{\zeta,\eta}(\mathscr{F})_\text{dbl}
    \xrightarrow{\sim} \overline{i}^{-1}\overline{j}_*\Psi_{\bar{Z}'}^{\zeta,\eta}(\mathscr{F})_\text{dbl}.
    \end{split}\]
    Therefore, we can conclude that $\Phi_{L_0 \cup L_1}\left(\mathscr{F}\right) \simeq \Phi_{L_1} \circ (\Phi_{L_0} \times \mathrm{id}_{Loc(L_1)})(\mathscr{F})$.
\end{proof}

\subsubsection{Hamiltonian invariance}
    Next, we show that the Lagrangian cobordism functor is invariant under Hamiltonian isotopies in the symplectization that fix the positive (convex) and negative (concave) ends.

\begin{theorem}[Hamiltonian invariance]\label{invariance}
    Let $X$ be a Weinstein manifold, $\Lambda_\pm \subset \partial_\infty X$ be Legendrian submanifolds, and $L \subset \partial_\infty X \times \mathbb{R}$ be a Lagrangian cobordism from $\Lambda_-$ to $\Lambda_+$. Suppose there is a compactly supported Hamiltonian isotopy $\varphi_H^s,\,s\in I$, on $\partial_\infty X \times \mathbb{R}$. Then
    $$\Phi_L^* \simeq \Phi_{\varphi_H^1(L)}^*, \,\,\, \Phi_L \simeq \Phi_{\varphi_H^1(L)}.$$
\end{theorem}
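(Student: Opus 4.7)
The plan is to reduce the statement to a parametric application of the contact transformation theorem (Theorem \ref{cont-trans} together with Remark \ref{cont-trans-para}), combined with the base change Lemma \ref{basechange}, in the same spirit as the proof of Theorem \ref{concate}. Since $\Phi_L^*$ is recovered from $\Phi_L$ by adjunction (Theorem \ref{perfcompact}), it suffices to establish $\Phi_L \simeq \Phi_{\varphi_H^1(L)}$.

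First, I would lift the compactly supported Hamiltonian isotopy $\varphi_H^s$ of $\partial_\infty X \times \mathbb{R}$ to a compactly supported contact isotopy $\widetilde{\varphi}_H^s$ on the contactization $\partial_\infty X \times \mathbb{R} \times \mathbb{R}_t$. The resulting Legendrian isotopy carries $\mathfrak{c}_X \cup \Lambda_- \times \mathbb{R} \cup \widetilde{L}$ onto $\mathfrak{c}_X \cup \Lambda_- \times \mathbb{R} \cup \widetilde{\varphi_H^1(L)}$ while fixing $\mathfrak{c}_X$ and $\Lambda_\pm \times \mathbb{R} \times \{0\}$ outside a compact set. By Theorem \ref{cont-trans} this yields an equivalence
\[
\Psi_H: \mu Sh_{\mathfrak{c}_X \cup \Lambda_- \times \mathbb{R} \cup \widetilde{L}} \xrightarrow{\sim} \mu Sh_{\mathfrak{c}_X \cup \Lambda_- \times \mathbb{R} \cup \widetilde{\varphi_H^1(L)}}
\]
which, via the cosheaf property (Lemma \ref{cosheaf}) and Theorem \ref{Gui}, restricts to the identity on $\mu Sh_{\mathfrak{c}_X \cup \Lambda_- \times \mathbb{R}}$ and intertwines the trivial-local-system gluings on $\widetilde{L}$ and $\widetilde{\varphi_H^1(L)}$ through the canonical diffeomorphism $L \cong \varphi_H^1(L)$ induced by $\varphi_H^s$. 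Consequently $\Psi_H$ acts as the identity on the source category $\mu Sh^b_{\mathfrak{c}_X \cup \Lambda_- \times \mathbb{R}} \times_{Loc^b(\Lambda_-)} Loc^b(L)$ of the cobordism functor.

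Next, I would set up the two-parameter family of contact isotopies on $T^{*,\infty}N$
\[
\psi^{s, \zeta} = \phi_Z^\zeta \circ \widetilde{\varphi}_H^s, \qquad (s, \zeta) \in [0, 1] \times (0, 1],
\]
where $\phi_Z^\zeta$ is the Liouville compression used to construct the inclusion $\Phi$ in the proof of Theorem \ref{main}. Because $\widetilde{\varphi}_H^s$ has compact support and $\phi_Z^\zeta$ compresses the cylindrical ends into $\mathfrak{c}_X$ as $\zeta \to 0$, the limit $\lim_{\zeta \to 0} \psi^{s,\zeta}(\mathfrak{c}_X \cup \Lambda_- \times \mathbb{R} \cup \widetilde{L})$ lies in $\mathfrak{c}_X \cup \Lambda_+ \times \mathbb{R}$ independently of $s$, and the self-gapped condition of Theorem \ref{NadShen} holds uniformly in $s$. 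Applying the parametric contact transformation (Theorem \ref{cont-trans}, Remark \ref{cont-trans-para}) together with the antimicrolocalization functor (Theorem \ref{antimicro}) to this family produces, for each $\mathscr{F}$, a weakly constructible sheaf on $N \times [0,1] \times (0,1]$. Then, just as in step (2) of the proof of Theorem \ref{concate}, the base change Lemma \ref{basechange} applied at the closed edge $\{\zeta = 0\}$ commutes the nearby cycle with restriction in $s$: restriction at $s = 0$ recovers $\Phi_L(\mathscr{F})$, while restriction at $s = 1$ recovers $\Phi_{\varphi_H^1(L)}(\Psi_H(\mathscr{F})) \simeq \Phi_{\varphi_H^1(L)}(\mathscr{F})$.

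The hard part will be verifying the hypotheses of Lemma \ref{basechange} in this two-parameter setting, namely that $\overline{SS_{\pi_\zeta}(\mathscr{F})} \cap T^*(N \times [0,1]) \times \{0\}$ is a conical Lagrangian and that the singular support contains no covector in the $s$-direction along $\{\zeta = 0\}$. This should follow by combining the uniform gapped condition above with the singular support estimates of Propositions \ref{sspushforward} and \ref{sspullback}, exactly as in the reduction of Theorem \ref{NadShen} to Theorem \ref{nearby}; the compact support of $\widetilde{\varphi}_H^s$ is what makes these estimates uniform in $s$. Once established, the equivalence $\Phi_L \simeq \Phi_{\varphi_H^1(L)}$ follows, and the corresponding statement $\Phi_L^* \simeq \Phi_{\varphi_H^1(L)}^*$ is obtained by passing to left adjoints.
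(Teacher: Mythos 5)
Your overall strategy — run a parametric version of the Nadler--Shende compression in the $s$-direction, apply the antimicrolocalization, take the nearby cycle at $\zeta = 0$, and commute restriction at $s = 0, 1$ past the nearby cycle via Lemma~\ref{basechange} — is the same engine the paper uses, and the reduction from $\Phi_L^*$ to $\Phi_L$ by adjunction is correct. The main structural difference is that you work with a single two-parameter family $\psi^{s,\zeta} = \phi_Z^\zeta \circ \widetilde{\varphi}_H^s$ and then compare the restrictions of one nearby-cycle sheaf, whereas the paper introduces \emph{two} parametric cobordism functors, $\Phi_{L \times I}$ (compressing the constant family $\widetilde{L} \times I$) and $\Phi_{L_H}$ (compressing the Legendrian movie $\widetilde{L}_H$), proves in Lemma~\ref{restrict-inv} that their $i_s$-slices compute $\Phi_L(\mathscr{F})$ and $\Phi_{\varphi_H^s(L)}(\mathscr{F})$ respectively, and then produces a natural morphism $\Phi_{L \times I}(\pi^{-1}\mathscr{F}) \to \Phi_{L_H}(\Psi_H^0\mathscr{F})$ whose cone lies in $\mu Sh_{(\mathfrak{c}_X \cup \Lambda_+ \times \mathbb{R}) \times I}$; the cone vanishes at $s=0$, and since $i_0^{-1}$ is an equivalence on that category (Theorem~\ref{cont-trans}), it vanishes identically, so restricting at $s=1$ concludes. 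The paper's two-functor formulation buys something concrete: the final propagation step only needs \emph{that $i_0^{-1}$ is an equivalence}, because vanishing of a cone is detected by a single slice.

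The gap in your writeup is precisely at the step the paper finesses. After you identify $i_0^{-1}(\text{nearby cycle}) \simeq \Phi_L(\mathscr{F})_{\text{dbl}}$ and $i_1^{-1}(\text{nearby cycle}) \simeq \Phi_{\varphi_H^1(L)}(\mathscr{F})_{\text{dbl}}$, you still need to explain why $i_0^{-1}$ and $i_1^{-1}$ of the \emph{same} sheaf $\mathscr{G}$ on $N \times [0,1]$ are canonically isomorphic; being separately equivalences is not enough. What makes this work is that $SS(\mathscr{G})$ lies inside the constant family $(\mathfrak{c}_X \cup \Lambda_+ \times \mathbb{R}) \times I$, hence is non-characteristic for the projection to $I$ (this uses the compact support of $\varphi_H^s$ — as $\zeta \to 0$ the $ds$-components of the movie are crushed by the Liouville contraction), and then Theorem~\ref{GKS}/Theorem~\ref{cont-trans} with the trivial isotopy identifies $i_0^{-1}$ with $i_1^{-1}$ as functors. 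You should either state this constancy argument explicitly, or follow the paper and recast the comparison as a morphism-with-vanishing-cone, where you only need $i_0^{-1}$ to be conservative. Either closes the proof, but as written the step from your bullet list of restriction identities to ``$\Phi_L \simeq \Phi_{\varphi_H^1(L)}$ follows'' is not yet justified.
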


    Again, we can only consider the dg categories $\mu Sh_{\mathfrak{c}_X \cup \Lambda_\pm \times \mathbb{R}}(\mathfrak{c}_X \cup \Lambda_\pm \times \mathbb{R})$ and $Loc(L)$, and show that
    \[\begin{split}
    \Phi_L \simeq \Phi_{\varphi_H^1(L)}: \, \mu Sh&\,{}_{\mathfrak{c}_X \cup \Lambda_- \times \mathbb{R}}(\mathfrak{c}_X \cup \Lambda_- \times \mathbb{R}) \times_{Loc(\Lambda_-)} Loc(L) \\
    &\rightarrow \mu Sh_{\mathfrak{c}_X \cup \Lambda_+ \times \mathbb{R}}(\mathfrak{c}_X \cup \Lambda_+ \times \mathbb{R}).
    \end{split}\]
    Then the results will immediately follow from the properties of adjoint functors.

    Our strategy is to compare $\Phi_L$ and $\Phi_{\varphi_H^s(L)}$ by constructing a 1-parametric family of Lagrangian cobordism functors, and then Theorem \ref{GKS} \cite{GKS} will allow us to show that $\Phi_L \simeq \Phi_{\varphi_H^1(L)}$ from the initial condition $\Phi_L \simeq \Phi_{\varphi_H^0(L)}$.

    Identify $\mathfrak{c}_X \cup \Lambda_\pm \times \mathbb{R}$ and $L$ with their Legendrian image in some higher dimensional contact manifold $T^{*,\infty}N$, and lift $\varphi_H^s$ to a contact Hamiltonian flow on $T^{*,\infty}N$. Consider $(\mathfrak{c}_X \cup \Lambda_- \times \mathbb{R}) \times I$. Then we have a Lagrangian cobordism functor
    \[\begin{split}
    \Phi_{L \times I}: \, \mu Sh&{}_{((\mathfrak{c}_X \cup \Lambda_- \times \mathbb{R}) \times I) \cup (L \times I)}\big(((\mathfrak{c}_X \cup \Lambda_- \times \mathbb{R}) \times I) \cup (L \times I)\big)\\
    &\rightarrow \mu Sh_{(\mathfrak{c}_X \cup \Lambda_+ \times \mathbb{R}) \times I}\big((\mathfrak{c}_X \cup \Lambda_+ \times \mathbb{R}) \times I\big).
    \end{split}\]
    On the other hand, let $\widetilde{L}_H$ be the Legendrian movie of $\widetilde{L}$ (in Definition \ref{lagmovie}) under the Hamiltonian flow $\varphi_H^s$. Then we have a Lagrangian cobordism functor
    \[\begin{split}
    \Phi_{L_H}: \, \mu Sh&{}_{((\mathfrak{c}_X \cup \Lambda_- \times \mathbb{R}) \times I) \cup \widetilde{L}_H}\big(((\mathfrak{c}_X \cup \Lambda_- \times \mathbb{R}) \times I) \cup \widetilde{L}_H\big)\\
    &\rightarrow \mu Sh_{(\mathfrak{c}_X \cup \Lambda_+ \times \mathbb{R}) \times I}\big((\mathfrak{c}_X \cup \Lambda_+ \times \mathbb{R}) \times I \big).
    \end{split}\]
    For $\mathscr{F} \in \mu Sh_{\mathfrak{c}_X \cup \Lambda_- \times \mathbb{R} \cup \widetilde{L}}\big(\mathfrak{c}_X \cup \Lambda_- \times \mathbb{R} \cup \widetilde{L}\big)$, write $\pi: T^{*,\infty}(N \times I) \rightarrow T^{*,\infty}N$. We consider
    $$\pi^{-1}(\mathscr{F}) \in \mu Sh_{((\mathfrak{c}_X \cup \Lambda_- \times \mathbb{R}) \times I) \cup ( \widetilde{L} \times I)}\big(((\mathfrak{c}_X \cup \Lambda_- \times \mathbb{R}) \times I) \cup (\widetilde{L} \times I)\big).$$
    On the other hand, by Theorem \ref{cont-trans} the Hamiltonian isotopy $\varphi_H^s$ defines a canonical sheaf
    $$\Psi_H^0(\mathscr{F}) \in \mu Sh_{((\mathfrak{c}_X \cup \Lambda_- \times \mathbb{R}) \times I) \cup \widetilde{L}_H}\big(((\mathfrak{c}_X \cup \Lambda_- \times \mathbb{R}) \times I) \cup \widetilde{L}_H\big).$$

\begin{lemma}\label{restrict-inv}
    Let $\pi: N \times I \rightarrow N$ be the projection, $i_s: N \times \{s\} \hookrightarrow N \times I$ be the inclusion, and $\widetilde{L}_H$ be the Legendrian movie of $\widetilde{L}$ under the Hamiltonian flow $\varphi_H^s,\,s\in I$. Then for any $\mathscr{F} \in \mu Sh_{\mathfrak{c}_X \cup \Lambda_- \times \mathbb{R} \cup \widetilde{L}}(\mathfrak{c}_X \cup \Lambda_- \times \mathbb{R} \cup \widetilde{L})$,
    $$i_s^{-1}\Phi_{L \times I}(\pi^{-1}(\mathscr{F})) = \Phi_L(\mathscr{F}), \,\,\, i_s^{-1}\Phi_{L_H}(\Psi_H^0(\mathscr{F})) = \Phi_{\varphi_H^s(L)}(\mathscr{F}).$$
\end{lemma}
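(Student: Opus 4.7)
The plan is to unpack both identities using the explicit construction of $\Phi_L$ from the proof of Theorem \ref{main}, where $\Phi_L(\mathscr{F})$ is obtained by (i) antimicrolocalizing into $Sh(N)$, (ii) applying the GKS quantization $\Psi_Z^\zeta$ along the Liouville compression $\phi_Z^\zeta\,(\zeta \in (0,1])$, and (iii) taking the nearby cycle as $\zeta \to 0$, followed by microlocalizing back to the limit skeleton. Both equalities then amount to checking that the slice restriction $i_s^{-1}$ along the isotopy parameter $s \in I$ commutes with each of these three constituent operations.

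For the first equation, the parametric Liouville flow on $T^{*,\infty}(N \times I)$ extends $\phi_Z^\zeta$ trivially in the $I$ direction, and $\pi^{-1}\mathscr{F}$ is constant along $I$. By Theorem \ref{GKS} together with its parametric refinement in Remark \ref{GKSpara}, the GKS quantization intertwines with $\pi^{-1}$, i.e.~$\Psi_{Z \times I}^\zeta(\pi^{-1}\mathscr{F}) \simeq \pi^{-1}\Psi_Z^\zeta(\mathscr{F})$. Smooth base change then lets $\pi^{-1}$ pass through the nearby-cycle step, so $\Phi_{L \times I}(\pi^{-1}\mathscr{F}) \simeq \pi^{-1}\Phi_L(\mathscr{F})$, and the tautology $i_s^{-1}\pi^{-1} = \mathrm{id}$ finishes this case.

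For the second equation, I would first invoke Definition \ref{lagmovie} to identify $\widetilde{L}_H \cap T^{*,\infty}N \times \{s\} = \widetilde{\varphi_H^s(L)}$, and then apply the contact-invariance Theorem \ref{cont-trans} with its parametric extension in Remark \ref{cont-trans-para} to conclude that $i_s^{-1}\Psi_H^0(\mathscr{F})$ is the sheaf on $\mathfrak{c}_X \cup \Lambda_- \times \mathbb{R} \cup \widetilde{\varphi_H^s(L)}$ associated to the cobordism $\varphi_H^s(L)$. Intertwining $i_s^{-1}$ with the parametric Liouville quantization is again just Remark \ref{cont-trans-para}, while commuting $i_s^{-1}$ past the nearby-cycle step is exactly the content of Lemma \ref{basechange} applied to the $(\zeta, s)$-plane. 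Chasing the three operations in order yields $i_s^{-1}\Phi_{L_H}(\Psi_H^0(\mathscr{F})) \simeq \Phi_{\varphi_H^s(L)}(\mathscr{F})$.

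The main technical obstacle is verifying the hypotheses of Lemma \ref{basechange} for the parametric family, namely that the relative singular support along the Liouville direction remains a conical Lagrangian in the $\{\zeta = 0\}$ limit. The compact support of $\varphi_H^s$ on $\partial_\infty X \times \mathbb{R}$ is precisely what confines the interaction between the Liouville and isotopy directions to a compact region and ensures the gapped condition from Theorem \ref{NadShen} persists uniformly in $s$; no estimates beyond those already used in the proofs of Theorem \ref{main} and Theorem \ref{concate} are needed.
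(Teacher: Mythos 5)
Your proposal follows essentially the same route as the paper's proof: express $\Phi_{L\times I}$ and $\Phi_{L_H}$ as the composite of antimicrolocalization, the parametric GKS/Liouville quantization, and the nearby cycle, then commute the slice restriction $i_s^{-1}$ through each step, using the parametric contact-invariance (Remark \ref{GKSviaCont}, Remark \ref{cont-trans-para}) for the quantization and Lemma \ref{basechange} for the nearby cycle. The one point to tighten is that the ``smooth base change'' you invoke for the first identity is not a general fact for $j_*$ against $\pi^{-1}$ (the paper explicitly warns it fails in general); it is precisely Lemma \ref{basechange}, whose hypotheses hold here because $\pi^{-1}\mathscr{F}$ is weakly constructible with no singular support in the $I$-codirection.
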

\begin{proof}
    First of all, let $\varphi_{\overline{Z}}^z$ be the Liouville flow on $T^{*,\infty}(N \times I)$ defined by the pullback (homogeneous) Hamiltonian $H_{\overline{Z}} = H_Z \circ \pi_{T^*N}$. Let $\Psi_{\overline{Z}}^\zeta$ be the equivalence functor defined by the Liouville flow $\varphi_{\overline{Z}}^z,\,z \in (-\infty,0]$, or $\phi_{\overline{Z}}^\zeta,\,\zeta \in (0,1]$, on $T^{*,\infty}(N \times I)$, and
    \[\begin{split}
    i_Z: N \times \{0\} \hookrightarrow N \times [0,1],\,&\,\, j_Z: N \times (0,1] \hookrightarrow N \times [0,1], \\
    \overline{i}_{\overline{Z}}: N \times I \times \{0\} \hookrightarrow N \times I \times [0,1], \,&\,\, \overline{j}_{\overline{Z}}: N \times I \times (0,1] \hookrightarrow N \times I \times [0,1].
    \end{split}\]
    Write $(\Psi_{\overline{Z}}^\zeta(\Psi_H^0(\mathscr{F})))_\text{dbl} \in Sh(N \times I \times (0,1])$ for the image of $\Psi_{\overline{Z}}^\zeta(\Psi_H^0(\mathscr{F}))$ under the antimicrolocalization functor in Theorem \ref{antimicro} \cite{NadShen}. Then by Remark \ref{GKSviaCont},
    $$i_s^{-1}\big(\Psi_{\overline{Z}}^\zeta(\Psi_H^0(\mathscr{F}))\big)_\text{dbl} = \Psi_{Z}^\zeta(i_s^{-1}\Psi_H^0(\mathscr{F}))_\text{dbl}.$$
    Similarly, let $i_s^{-1}\Phi_{L \times I}(\pi^{-1}(\mathscr{F}))_\text{dbl} \in Sh(N \times \{s\})$ be the image of $i_s^{-1}\Phi_{L \times I}(\pi^{-1}(\mathscr{F}))$ under the antimicrolocalization functor. By Proposition \ref{basechange} we have
    \[\begin{split}
    i_s^{-1}\Phi_{L \times I}(\pi^{-1}(\mathscr{F}))_\text{dbl} &\xrightarrow{\sim} i_s^{-1}\overline{i}_{\overline{Z}}^{-1}\overline{j}_{\overline{Z},*}\big(\Psi_{\overline{Z}}^\zeta( \pi^{-1}(\mathscr{F}))\big)_\text{dbl} \\
    &\xrightarrow{\sim} i_Z^{-1}j_{Z,*}\big(\Psi_{Z}^\zeta(i_s^{-1}\pi^{-1}(\mathscr{F}))\big)_\text{dbl} \xrightarrow{\sim} \Phi_L(\mathscr{F})_\text{dbl}.
    \end{split}\]
    On the other hand, let $i_s^{-1}\Phi_{L_H}(\pi^{-1}(\mathscr{F}))_\text{dbl} \in Sh(N \times \{s\})$ be the image of $i_s^{-1}\Phi_{L_H}(\pi^{-1}(\mathscr{F}))$ under the antimicrolocalization functor. By Proposition \ref{basechange} again we also have
    \[\begin{split}
    i_s^{-1}\Phi_{L_H}(\pi^{-1}(\mathscr{F}))_\text{dbl} &\xrightarrow{\sim} i_s^{-1}\overline{i}_{\overline{Z}}^{-1}\overline{j}_{\overline{Z},*}\big(\Psi_{\overline{Z}}^\zeta(
    \Psi_H^0(\mathscr{F}))\big)_\text{dbl} \\
    &\xrightarrow{\sim} i_Z^{-1}j_{Z,*}\big(\Psi_{Z}^\zeta(i_s^{-1}\Psi_H^0(\mathscr{F}))\big)_\text{dbl} \xrightarrow{\sim} \Phi_{\varphi_H^s(L)}(\mathscr{F})_\text{dbl}.
    \end{split}\]
    Therefore the proof is completed.
\end{proof}

\begin{proof}[Proof of Theorem \ref{invariance}]
    For $\mathscr{F} \in \mu Sh_{\mathfrak{c}_X \cup \Lambda_- \times \mathbb{R} \cup \widetilde{L}}(\mathfrak{c}_X \cup \Lambda_- \times \mathbb{R} \cup \widetilde{L})$, we consider
    $$\pi^{-1}(\mathscr{F}) \in \mu Sh_{((\mathfrak{c}_X \cup \Lambda_- \times \mathbb{R}) \times I) \cup (\widetilde{L} \times I)}\big(((\mathfrak{c}_X \cup \Lambda_- \times \mathbb{R}) \times I) \cup (\widetilde{L} \times I)\big).$$
    On the other hand, for the Hamiltonian isotopy $\varphi_H^s$ we consider by Theorem \ref{cont-trans}
    $$\Psi_H^0(\mathscr{F}) \in \mu Sh_{((\mathfrak{c}_X \cup \Lambda_- \times \mathbb{R}) \times I) \cup \widetilde{L}_H}\big(((\mathfrak{c}_X \cup \Lambda_- \times \mathbb{R}) \times I) \cup \widetilde{L}_H\big).$$
    There is a natural morphism $\pi^{-1}(\mathscr{F}) \rightarrow \Psi_H^0(\mathscr{F})$, and thus a natural morphism
    $$\Phi_{L \times I}\big(\pi^{-1}(\mathscr{F})\big) \rightarrow \Phi_{L_H}\big(\Psi_H^0(\mathscr{F})\big).$$
    We will show that this is a natural quasi-isomorphism. In fact,
    $$\mathrm{Cone}\left(\Phi_{L \times I}\big(\pi^{-1}(\mathscr{F})\big) \rightarrow \Phi_{L_H}\big(\Psi_H^0(\mathscr{F})\big)\right) \in \mu Sh_{(\mathfrak{c}_X \cup \Lambda_+ \times \mathbb{R}) \times I}\big((\mathfrak{c}_X \cup \Lambda_+ \times \mathbb{R}) \times I\big).$$
    By Lemma \ref{restrict-inv}, we also know that when $s = 0$,
    \[\begin{split}
    i_0^{-1}\mathrm{Cone}&\left(\Phi_{L \times I}\big(\pi^{-1}(\mathscr{F})\big) \rightarrow \Phi_{L_H}\big(\Psi_H^0(\mathscr{F})\big)\right) \\
    &\simeq \mathrm{Cone}\left(i_0^{-1}\Phi_{L \times I}\big(\pi^{-1}(\mathscr{F})\big) \rightarrow i_0^{-1}\Phi_{L_H}\big(\Psi_H^0(\mathscr{F})\big)\right) \\
    &\simeq \mathrm{Cone}\left(\Phi_{L}\big(i_0^{-1}\pi^{-1}(\mathscr{F})\big) \rightarrow \Phi_{L}\big(i_0^{-1}\Psi_H^0(\mathscr{F})\big)\right) \simeq 0.
    \end{split}\]
    As by Theorem \ref{cont-trans}, $i_0^{-1}: \mu Sh_{(\mathfrak{c}_X \cup \Lambda_+ \times \mathbb{R}) \times I}\big((\mathfrak{c}_X \cup \Lambda_+ \times \mathbb{R}) \times I\big) \rightarrow \mu Sh_{\mathfrak{c}_X \cup \Lambda_+ \times \mathbb{R}}(\mathfrak{c}_X \cup \Lambda_+ \times \mathbb{R}\big)$ defines an equivalence, we can conclude that the mapping cone is identically zero, and thus
    $$\Phi_{L \times I}\big(\pi^{-1}(\mathscr{F})\big) \xrightarrow{\sim} \Phi_{L_H}\big(\Psi_H^0(\mathscr{F})\big).$$
    Therefore by restricting to $s = 1$ and applying Lemma \ref{restrict-inv} again the proof is completed.
\end{proof}

\subsection{Comparison with the Isotopy Functor}\label{compare1}
    In this section we show that when the Lagrangian cobordism $L$ from $\Lambda_-$ to $\Lambda_+$ is induced by a Hamiltonian isotopy in Theorem \ref{GKS} \cite{GKS}, i.e.~$\Lambda_- = \Lambda$ and $\Lambda_+ = \varphi_H^1(\Lambda)$, then our Lagrangian cobordism functor agrees with the sheaf quantization functor given by the Hamiltonian isotopy\footnote{The author is very grateful to Vivek Shende, who essentially explains to the author the strategy of the proof that appears here.}. This section is not related to the rest of the paper, so the readers may feel free to skip it.

    Our strategy is similar to the proof of Theorem \ref{invariance} (Hamiltonian invariance), that is, to realize the Lagrangian cobordism as a functor
    $$Sh^b_{\Lambda \times I}(M \times I) \rightarrow Sh^b_{\Lambda_H}(M \times I)$$
    where $\Lambda_H$ is the Legendrian movie of $\Lambda$ under the Hamiltonian flow $\varphi_H^s,\,s \in I$. Then Theorem \ref{GKS} \cite{GKS} will enable us to conclude that $\Phi_{L} \simeq \Psi_H$ at $M \times \{1\}$ from the initial condition at $M \times \{0\}$.

    First, recall the construction of Lagrangian cobordisms induced by a Hamiltonian isotopy \cite{Chantraine} (see \cite[Section 4.2.3]{GroEliashGF} for a different construction). Suppose the contact Hamiltonian is $H: T^{*,\infty}M \rightarrow \mathbb{R}$. Then consider the homogeneous symplectic Hamiltonian to be $\widehat H(x, \xi) = |\xi|H(x, \xi/|\xi|): T^*M \rightarrow \mathbb{R}$. Let $\eta: [0,+\infty) \rightarrow [0, 1]$ be a cut-off function such that $\eta(r) = 0$ when $r$ is small, and $\eta(r) = 1$ when $r$ is large. Then the Lagrangian cobordism induced by $H$ is
    $$L = \varphi_{\eta(|\xi|) \widehat H(x, \xi)}^1(\Lambda \times \mathbb{R}_{>0}).$$
    One can see that $L$ coincides with $\Lambda \times \mathbb{R}_{>0}$ when $|\xi|$ is small, and coincides with $\varphi_H^1(\Lambda) \times \mathbb{R}_{>0}$ when $|\xi|$ is large.

    Now we try to construct a Lagrangian cobordism $\overline{L}$ from $\Lambda \times I$ to $\Lambda_H$, so that the restriction to $T^*M \times \{0\}$ is just $\Lambda \times \mathbb{R}_{>0}$, and the restriction to $T^*M \times \{1\}$ is $L$. Let
    $$\overline{\varphi}_{\overline{H}}^t: \, T^{*,\infty}(M \times I) \rightarrow T^{*,\infty}(M \times I); \,\, (x, \xi, s, \sigma) \mapsto (\varphi_H^{st}(x, \xi), s, \sigma - sH \circ \varphi_H^{st}(x, \xi)).$$
    Then the Lagrangian cobordism $\overline{L}$ induced by $\overline{\varphi}_{\overline{H}}^t,\,t \in I$, will satisfy our conditions.

    Recall that to define the Lagrangian cobordism functor, we need to consider a proper embedding $e: T^*M \hookrightarrow T^{*,\infty}(M \times \mathbb{R})$. For example, consider a Riemannian metric $g$, let $\varphi_g^t$ be the geodesic flow, and define
    $$e(x, \xi) = (\varphi_g^{-1}(x, \xi), |\xi|_g^2/2, 1).$$
    Then we are going to work with the (singular) Legendrians $(M \cup \Lambda \times \mathbb{R}_{>0})_\epsilon^\prec$ and $(M \cup \varphi_H^1(\Lambda) \times \mathbb{R}_{>0})_\epsilon^\prec \subset T^{*,\infty}_{\tau > 0}(M \times \mathbb{R})$.

    Let $\mathscr{F} \in \mu Sh^b_{M \cup \Lambda \times \mathbb{R}_{>0}}(M \cup \Lambda \times \mathbb{R}_{>0})$. Let $\varphi_{\eta \widehat H}^s$ be the Hamiltonian flow on $T^*M$ that extends to $T^{*,\infty}_{\tau > 0}(M \times \mathbb{R})$. Then by Theorem \ref{cont-trans}, there is a canonical sheaf $\Psi_{\eta \widehat H}(\mathscr{F}) \in \mu Sh^b_{M \cup \varphi_{\eta \widehat H}^1(\Lambda \times \mathbb{R}_{>0})}(M \cup \varphi_{\eta \widehat H}^1(\Lambda \times \mathbb{R}_{>0}))$ whose restriction to $M \cup \Lambda \times \mathbb{R}_{>0}$ is $\mathscr{F}$, this means $\Psi_{\eta \widehat H}(\mathscr{F})$ is the unique lifting of $\mathscr{F}$ under the (restriction) functor
    \[\begin{split}
    \mu Sh^b_{M \cup \varphi_{\eta \widehat H}^1(\Lambda \times \mathbb{R}_{>0})} & \xrightarrow{\sim} \mu Sh^b_{M \cup \Lambda \times \mathbb{R}_{>0}}(M \cup \Lambda \times \mathbb{R}_{>0}) \times_{Loc^b(\Lambda)} Loc^b(L) \\
    & \xrightarrow{\sim} \mu Sh^b_{M \cup \Lambda \times \mathbb{R}_{>0}}(M \cup \Lambda \times \mathbb{R}_{>0}).
    \end{split}\]
    In other words, by abusing notations, we can write
    $$\Phi_{L}(\mathscr{F}) = \Phi_{L}(\Psi_{\eta \widehat H}(\mathscr{F})).$$

\begin{lemma}\label{restrict-compare}
    Let $\overline{L}$ be the Lagrangian cobordism from $\Lambda \times I$ to $\Lambda_H$ induced by $\overline{\varphi}_{\overline{H}}^s$, $i_s: T^{*,\infty}(M \times \mathbb{R}) \times \{s\} \hookrightarrow T^{*,\infty}(M \times \mathbb{R} \times I)$ be the inclusion and $\pi: T^{*,\infty}(M \times \mathbb{R} \times I) \rightarrow T^{*,\infty}(M \times \mathbb{R})$ be the projection. Then for any $\mathscr{F} \in \mu Sh^b_{M \cup \Lambda \times \mathbb{R}_{>0}}(M \cup \Lambda \times \mathbb{R}_{>0})$,
    $$i_s^{-1}\Phi_{\overline{L}}(\pi^{-1}(\mathscr{F})) = \Phi_{L_s}(\mathscr{F}),$$
    where $L_s = \varphi_{s\eta \widehat H}^1(\Lambda \times \mathbb{R}_{>0})$ is the Lagrangian cobordism induced by $\varphi_{sH}^t$.
\end{lemma}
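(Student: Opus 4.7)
The proof will proceed in close analogy with Lemma \ref{restrict-inv}. The plan is to realize the Lagrangian cobordism functor $\Phi_{\overline{L}}$ via the antimicrolocalization and Liouville-compression construction of Section \ref{sheafquan}, and then commute the restriction $i_s^{-1}$ past each step in that construction using the two auxiliary tools from the proof of Lemma \ref{restrict-inv}: the base-change Lemma \ref{basechange} (to exchange $i_s^{-1}$ with the nearby-cycle pushforward $\overline{j}_{\overline{Z}*}$ along the Liouville parameter), and Remark \ref{GKSviaCont} (to exchange $i_s^{-1}$ with the sheaf-quantization functor $\Psi_{\overline{Z}}^{\zeta}$).

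Concretely, I would first let $\varphi_{\overline{Z}}^z$ denote the Liouville flow on $T^{*,\infty}(M \times \mathbb{R} \times I)$ that acts as $\varphi_Z^z$ on the first factor and fixes $I$, and set $\phi_{\overline{Z}}^\zeta = \varphi_{\overline{Z}}^{\ln \zeta}$; let $\overline{i}_{\overline{Z}}: N \times \mathbb{R} \times I \times \{0\} \hookrightarrow N \times \mathbb{R} \times I \times [0,1]$ and $\overline{j}_{\overline{Z}}: N \times \mathbb{R} \times I \times (0,1] \hookrightarrow N \times \mathbb{R} \times I \times [0,1]$ be the obvious inclusions, with $i_Z, j_Z$ the analogous inclusions without the $I$-factor. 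Antimicrolocalizing via Theorem \ref{antimicro}, we may write
\[
\Phi_{\overline{L}}(\pi^{-1}(\mathscr{F}))_\text{dbl} \xrightarrow{\sim} \overline{i}_{\overline{Z}}^{-1}\overline{j}_{\overline{Z}*}\bigl(\Psi_{\overline{Z}}^{\zeta}(\pi^{-1}(\mathscr{F}))\bigr)_\text{dbl}.
\]
Applying Lemma \ref{basechange} and Remark \ref{GKSviaCont} in succession gives
\[
i_s^{-1}\Phi_{\overline{L}}(\pi^{-1}(\mathscr{F}))_\text{dbl} \xrightarrow{\sim} i_Z^{-1}j_{Z*}\bigl(\Psi_{Z}^{\zeta}(i_s^{-1}\pi^{-1}(\mathscr{F}))\bigr)_\text{dbl}.
\]
Finally, $i_s^{-1}\pi^{-1}(\mathscr{F}) = \mathscr{F}$, and direct inspection of the formula defining $\overline{\varphi}_{\overline{H}}^t$ shows that its restriction to the slice $I \times \{s\}$ is exactly $\varphi_{sH}^t$, so the Lagrangian slice $\overline{L}|_s$ is precisely $L_s$. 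Thus the right-hand side is the antimicrolocalization of $\Phi_{L_s}(\mathscr{F})$, completing the proof.

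The main obstacle will be verifying that the hypotheses of Lemma \ref{basechange} (the relative singular-support conditions, the absence of covectors in the auxiliary parameter directions, and the conic-Lagrangian limit as $\zeta \to 0$) actually hold in our two-parameter family with Liouville parameter $\zeta$ and slice parameter $s$. The product structure $\varphi_{\overline{Z}}^z = (\varphi_Z^z, \mathrm{id}_I)$ means the $s$-direction is preserved throughout the Liouville compression, so no covectors in $T^*I$ are generated; meanwhile the flow $\overline{\varphi}_{\overline{H}}^t$ is conical away from the zero section so that the gapped hypothesis used throughout Section \ref{sheafquan} is available. This check mirrors the one carried out in the proof of Lemma \ref{restrict-inv}, where an identical two-parameter setup was handled for the Hamiltonian-invariance theorem, and should go through without essential modification.
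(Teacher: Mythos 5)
Your proof misses the essential Hamiltonian-isotopy step that makes the cobordism appear at all, and the gap is fatal rather than cosmetic. In the paper's setup $\Phi_L(\mathscr{F})$ is, by the ``abuse of notation'' spelled out just before the lemma, shorthand for first transporting $\mathscr{F}$ from $M \cup \Lambda \times \mathbb{R}_{>0}$ onto the skeleton $M \cup L = M \cup \varphi^1_{\eta\widehat H}(\Lambda\times\mathbb{R}_{>0})$ via the isotopy functor $\Psi_{\eta\widehat H}^0$, and only then compressing with the Liouville flow. Accordingly, the correct antimicrolocalized formula is
$$\Phi_{\overline{L}}(\pi^{-1}(\mathscr{F}))_\text{dbl} = \overline{i}_{\overline{Z}}^{-1}\overline{j}_{\overline{Z},*}\big(\Psi_{\overline{Z}}^\zeta\,\Psi_{\eta\widehat H}^0(\pi^{-1}(\mathscr{F}))\big)_\text{dbl},$$
with the parametric Hamiltonian isotopy functor $\Psi_{\eta\widehat H}^0$ inserted between $\pi^{-1}$ and $\Psi_{\overline{Z}}^\zeta$. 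Your display drops this factor and feeds $\pi^{-1}(\mathscr{F})$ directly into $\Psi_{\overline{Z}}^\zeta$. Since the singular support of $\pi^{-1}(\mathscr{F})$ is carried on $(M \cup \Lambda\times\mathbb{R}_{>0})\times I$, which is already Liouville-invariant, the nearby-cycle construction applied to $\Psi_{\overline{Z}}^\zeta(\pi^{-1}(\mathscr{F}))$ collapses to the identity on each slice; the Lagrangian $\overline{L}$ never enters the computation, so the right-hand side cannot equal $\Phi_{L_s}(\mathscr{F})$.

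The step you are actually missing is the identity $i_s^{-1}\Psi_{\eta\widehat H}^0(\pi^{-1}(\mathscr{F})) = \Psi_{s\eta\widehat H}^0(\mathscr{F})$, obtained from Remark \ref{GKSviaCont} together with the observation you do make at the end, namely that the restriction of $\overline{\varphi}_{\overline{H}}^t$ to the slice $\{s\}$ is $\varphi_{sH}^t$. It is exactly this identity (applied after the base change of Lemma \ref{basechange} moves $i_s^{-1}$ past $\overline{j}_{\overline{Z},*}$ and $\Psi_{\overline{Z}}^\zeta$) that produces $\Psi^0_{s\eta\widehat H}(\mathscr{F})$, and hence $\Phi_{L_s}(\mathscr{F})_\text{dbl} = i_Z^{-1}j_{Z,*}\big(\Psi_Z^\zeta\Psi_{s\eta\widehat H}^0(\mathscr{F})\big)_\text{dbl}$ on the right. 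Your observation about $\overline{L}|_s = L_s$ is correct but by itself does nothing: you need to see the sheaf actually living on $\overline{L}$ before the slicing can detect $L_s$. Apart from this omission, the strategy (antimicrolocalize, apply Lemma \ref{basechange} and Remark \ref{GKSviaCont} to commute $i_s^{-1}$ past the Liouville parameter) matches the paper's route; the hypotheses check for Lemma \ref{basechange} that you flag is also genuinely needed and your argument for it is reasonable.
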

\begin{proof}
    First of all, $\varphi_{\overline{Z}}^z$ be the Liouville flow on $T^{*,\infty}(M \times \mathbb{R} \times I)$ defined by the pullback (homogeneous) Hamiltonian $H_{\overline{Z}} = H_Z \circ \pi_{T^*(M \times \mathbb{R})}$. Let $\Psi_{\overline{Z}}^\zeta$ be the equivalence defined by the Liouville flow $\varphi_{\overline{Z}}^z,\,z\in (-\infty,0]$, or $\phi_{\overline{Z}}^\zeta,\,\zeta \in (0,1]$, on $T^{*,\infty}(M \times \mathbb{R} \times I)$, and
    \[\begin{array}{c}
    i_Z: M \times \mathbb{R} \times \{0\} \hookrightarrow  M \times \mathbb{R} \times [0,1],
    j_Z: M \times \mathbb{R} \times (0,1] \hookrightarrow M \times \mathbb{R} \times [0,1], \\
    \overline{i}_{\overline{Z}}: M \times \mathbb{R} \times I \times \{0\} \hookrightarrow M \times \mathbb{R} \times I \times [0,1],
    \overline{j}_{\overline{Z}}: M \times \mathbb{R} \times I \times (0,1] \hookrightarrow M \times \mathbb{R} \times I \times [0,1].
    \end{array}\]
    Write $(\Psi_{\overline{Z}}^\zeta(\Psi_{\eta\widehat H}^0(\mathscr{F})))_\text{dbl} \in Sh^b(M \times \mathbb{R} \times I)$ for the image of $\Psi_{\overline{Z}}^\zeta(\Psi_{\eta\widehat H}^0(\mathscr{F}))$ under the antimicrolocalization functor in Theorem \ref{antimicro} \cite{NadShen}. Then by Remark \ref{GKSviaCont},
    $$i_s^{-1}\big(\Psi_{\overline{Z}}^\zeta(\Psi_{\eta\widehat H}^0(\mathscr{F}))\big)_\text{dbl} = \Psi_{\overline{Z}}^\zeta(i_s^{-1}\Psi_{\eta\widehat H}^0(\mathscr{F}))_\text{dbl}.$$

    We can write down the Lagrangian cobordism functor as a series of compositions
    $$\Phi_{\overline{L}}(\pi^{-1}(\mathscr{F}))_\text{dbl} = \overline{i}_{\overline{Z}}^{-1}\overline{j}_{\overline{Z},*}\big({\Psi}_{\overline{Z}}^\zeta {\Psi}_{\eta H}^0(\pi^{-1}(\mathscr{F}))\big)_\text{dbl}.$$
    Note that ${\Psi}_{\overline{Z}}^\zeta$ is the equivalence functor defined by the Liouville flow on $T^{*,\infty}(M \times \mathbb{R} \times I)$. Then by Lemma \ref{basechange} there is a natural morphism
    \[\begin{split}
    i_s^{-1}\Phi_{\overline{L}}(\pi^{-1}(\mathscr{F}))_\text{dbl} &\xrightarrow{\sim} i_s^{-1}\overline{i}_{\overline{Z}}^{-1}\overline{j}_{\overline{Z},*}\big({\Psi}_{\overline{Z}}^\zeta {\Psi}_{\eta\widehat H}^0(\pi^{-1}(\mathscr{F}))\big)_\text{dbl} \\
    &\xrightarrow{\sim} i^{-1}_Zj_{Z,*}\Psi_{Z}^\zeta\big(i_s^{-1}{\Psi}_{\eta\widehat H}^0(\pi^{-1}(\mathscr{F}))\big)_\text{dbl} \\
    &\xrightarrow{\sim} i_Z^{-1}j_{Z,*}\big(\Psi_{Z}^\zeta\Psi_{s\eta\widehat H}^0(\mathscr{F})\big)_\text{dbl} \xrightarrow{\sim} \Phi_{L_s}(\mathscr{F})_\text{dbl}.
    \end{split}\]
    and thus we complete the proof.
\end{proof}

\begin{proof}[Proof of Theorem \ref{comparegks}]
    Consider the Lagrangian cobordism $\overline{L}$ induced by $\overline{\varphi}_{\overline{H}}^t$. By Lemma \ref{restrict-compare}, we know that for $i_0: T^{*,\infty}(M \times \mathbb{R}) \times \{0\} \hookrightarrow T^{*,\infty}(M \times \mathbb{R} \times I)$ and $\pi: T^{*,\infty}(M \times \mathbb{R} \times I) \rightarrow T^{*,\infty}(M \times \mathbb{R})$,
    $$i_0^{-1}\Phi_{\overline{L}}(\mathscr{F}) = \Phi_{\Lambda \times \mathbb{R}_{>0}}(\mathscr{F}) = \mathscr{F}.$$
    By Theorem \ref{cont-trans} and Remark \ref{GKSviaCont}, $i_0^{-1}: \mu Sh^b_{((M\cup \Lambda \times \mathbb{R}_{>0}) \times I) \cup \overline{L}}(((M\cup \Lambda \times \mathbb{R}_{>0}) \times I) \cup \overline{L}) \rightarrow \mu Sh^b_{M \cup \Lambda \times \mathbb{R}_{>0}}(M \cup \Lambda \times \mathbb{R}_{>0})$ is an equivalence and its inverse is the Hamiltonian isotopy functor $\Psi_H^0$ in Theorem \ref{GKS} \cite{GKS}. Therefore
    $$\Phi_{\overline{L}}(\pi^{-1}(\mathscr{F})) = {\Psi}_{H}^0(\mathscr{F}).$$
    Finally, by restricting to $M \times \{1\}$ and apply Lemma \ref{restrict-compare} again, we can conclude that $\Phi_{L}(\mathscr{F}) = \Psi_H(\mathscr{F}).$
\end{proof}

\subsection{Comparison with the Filling Functor}\label{compare2}
    When $\Lambda_- = \varnothing$, $L$ is a Lagrangian filling of $\Lambda_+$. In this section we basically show that for costandard Lagrangian branes, our fully faithful functor
    $$\Phi_L: \, Loc^b(L) \hookrightarrow Sh^b_{\Lambda_+}(M)$$
    coincides with the functor Jin-Treumann constructed \cite{JinTreu}. Again, the reader may skip this section.

    Fix an embedding $e: T^*M \hookrightarrow T^{*,\infty}(M \times \mathbb{R})$. For example, consider a Riemannian metric $g$, let $\varphi_g^t$ be the geodesic flow, and define
    $$e(x, \xi) = (\varphi_g^{-1}(x, \xi), |\xi|_g^2/2, 1).$$
    Then $M \cup \Lambda \times \mathbb{R}_{>0} \subset T^{*,\infty}(M \times \mathbb{R})$ is a (singular) Legendrian.

    Let $U \subset M$ be an open subset with subanalytic boundary $\partial U$. The outward conormal of $U$ is denoted by $\nu^*_{U,+}M$. Then the Lagrangian skeleton $M \cup \nu^*_{U,+}M$ is shown in Figure \ref{jintreumann}.

\begin{definition}
    Let $m_U: \overline{U} \rightarrow [0, +\infty)$ be the defining function of $\partial U$ such that $m_U^{-1}(0) = \partial U$. Let $f_U = -\ln(m_U)$. Then the graph of the exact 1-form $L = L_U = L_{df_U} \subset T^*M$ is the costandard Lagrangian brane associated to $U$.
\end{definition}

    When $L$ intersect the ideal contact boundary \cite{IdealBdy} of $T^*M$ at $\nu^{*,\infty}_{U,+}M$ such that it is tangent to $\nu^*_{U,+}M$ to infinite order (for example, when $0$ is a regular value of $m_U$), it can be viewed as a Lagrangian filling of $\nu^{*,\infty}_{U,+}M$, equipped with a different primitive $f_U'$ that is bounded on $L = L_U$. Via the embedding $e$, its image $\widetilde{L}$ will be a Legendrian in $T^{*,\infty}(M \times \mathbb{R})$ that coincides with $\nu^*_{U,+}M$ at infinity. 

\begin{figure}
  \centering
  \includegraphics[width=1.0\textwidth]{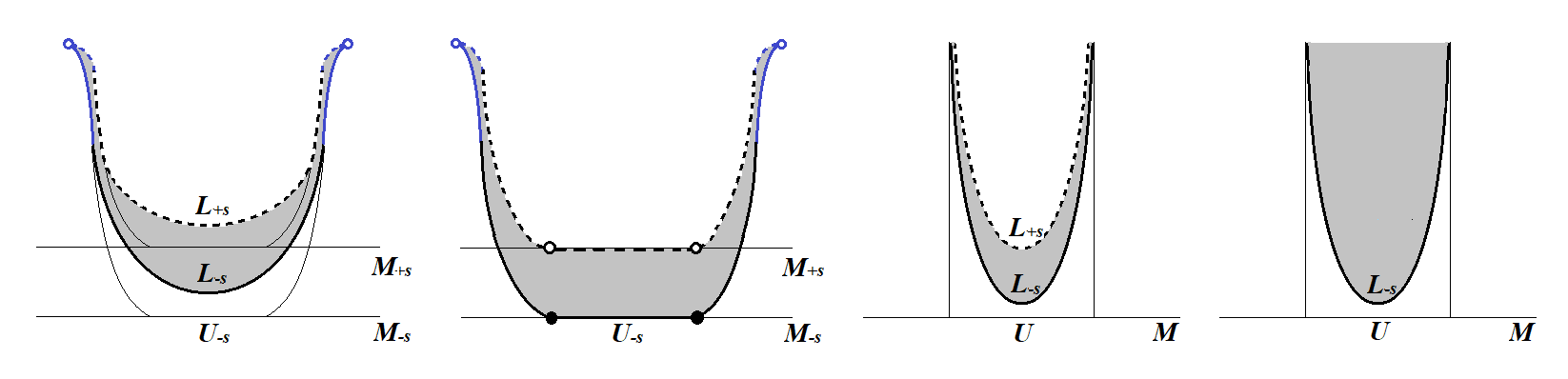}\\
  \caption{The Nadler-Shende construction (left) and the Jin-Treumann construction (right). The grey regions are the supports of the corresponding sheaves. The thin lines on the left are the skeleton $M \cup \nu^*_{U,+}M$ embedded in $J^1(M)$, and the thick lines there are the two copies of Lagrangian fillings. The blue lines are the family of cusps $\partial \Lambda \,\times \prec$.}\label{jintreumann}
\end{figure}

\begin{proof}[Proof of Proposition \ref{comparejt}]
    Consider a complex of local systems $\mathscr{F}_L$ on $L$ with stalk $F$. Note that the projection $\pi_L: L \hookrightarrow T^*M \rightarrow M$ defines a diffeomorphism $L \cong U$. Write $\mathscr{F}_U = \pi_{L,*}\mathscr{F}_L$. We will show that both functors send $\mathscr{F}_L$ to $\mathscr{F}_U$.

    (1)~We first compute $\Phi_L: Loc^b(L) \rightarrow Sh^b(M)$. Let the vertical vector field $\partial/\partial t$ be the Reeb vector field. Consider the skeleton $M \cup \nu^*_{U,-}M$ and its positive/negative Reeb pushoff $(M \cup \nu^*_{U,-}M)_{\pm \epsilon}$. Lift $L$ to a Legendrian $\widetilde{L}$ that coincides with $M \cup \nu^*_{U,-}M$ when the radius coordinate $r = \ln|\xi|$ in $T^*M$ is sufficiently large. When $r$ is large, we cut off the Legendrian $(\nu^{*,\infty}_{U,-}M)_{\pm \epsilon}$ and connect them by a family of cusps $\nu^{*,\infty}_{U,-}M \,\times \prec$, and also cut off $\tilde{L}_{\pm \epsilon}$ and connect them by a family of cusps $\nu^{*,\infty}_{U,-}M \,\times \prec$. We consider
    $$Loc^b(L) \xrightarrow{\sim} \mu Sh^b_{\widetilde{L}}(\widetilde{L}) \hookrightarrow Sh^b_{(\widetilde{L}, \partial \widetilde{L})^\prec_\epsilon}(M \times \mathbb{R})_0.$$
    Here the subscript $0$ means the subcategory of sheaves with $0$ stalk outside a compact set. Hence there is a sheaf $\mathscr{F}_\text{dbl}$ with singular support in $(\widetilde{L}, \partial \widetilde{L})^\prec_\epsilon$ whose microlocalization along $\widetilde{L}_{-\epsilon}$ is given by $\mathscr{F}_L$, given by the antimicrolocalization functor Theorem \ref{antimicro} \cite{NadShen}.

    Running the Liouville flow $\varphi_Z^z$ for $z\in(-\infty, 0]$ or $\phi_Z^\zeta$ for $\zeta \in (0,1]$, we can get a sheaf on $M \times \mathbb{R} \times (0, 1]$. Note that the end $(\nu^{*,\infty}_{U,-}M)_{\pm \epsilon}$ (which coincides with $\partial \widetilde{L}_{\pm \epsilon}$) is preserved by Liouville flow up to a Reeb translation (due to change of the primitive $f_U'$ of $L_U$), and the limit points
    $$\lim_{z\rightarrow -\infty}\varphi_Z^z(\widetilde{L}, \partial \widetilde{L})^\prec_\epsilon = \lim_{\zeta\rightarrow 0}\phi_Z^\zeta(\widetilde{L}, \partial \widetilde{L})^\prec_\epsilon \subset T^{*,\infty}_{\tau>0}(M \times \mathbb{R}) \times \{0\}$$
    are exactly $(U \cup \nu_{U,-}^*M)_{\epsilon}^\prec$. The resulting sheaf is therefore in $Sh^b_{(\overline{U} \cup \nu_{U,-}^*M)^\prec_\epsilon}(M \times \mathbb{R})$.

    Now we apply the microlocalization functor
    $$Sh^b_{(\overline{U} \cup \nu_{U,-}^*M)^\prec_\epsilon}(M \times \mathbb{R})_0 \rightarrow \mu Sh^b_{(\overline{U} \cup \nu_{U,-}^*M)_{-\epsilon}}((\overline{U} \cup \nu_{U,-}^*M)_{-\epsilon}) \xrightarrow{\sim} Sh^b_{\nu^*_{U,-}M}(M)_0.$$
    The microstalks for points in $\overline{U}_{-\epsilon}$ are $F$, and those for points in $M_{-\epsilon} \backslash \overline{U}_{-\epsilon}$ are $0$. The microlocal monodromy along $U$ is determined by $\mathscr{F}_U = \pi_{L,*}\mathscr{F}_L$ because topologically taking the limit $\lim_{z\rightarrow -\infty}\varphi_Z^z(L)$ under the Liouville flow gives a homotopy equivalence $L \simeq \lim_{z\rightarrow -\infty}\varphi_Z^z(L) \simeq U \cup \nu^*_{U,-}M \simeq U$ that is homotopic to the projection $\pi_L: L \xrightarrow{\sim} U$.

    (2)~Then we consider $\Psi_L^{JT}: Loc^b(L) \rightarrow Sh^b(M)$. In \cite{JinTreu} they considered the Legendrian lift $\widetilde{L}$ of $L$ whose front projection onto $M \times \mathbb{R}$ is the graph of the function $f_U$. Then consider the positive/negative Reeb pushoff $\widetilde{L}_{\pm \epsilon}$, which are the graphs of the functions $f_U \pm \epsilon$. We have \cite{JinTreu}
    $$Loc^b(L) \xrightarrow{\sim} \mu Sh^b_{\widetilde{L}}(\widetilde{L}) \hookrightarrow Sh^b_{\widetilde{L}_{\pm \epsilon}}(M \times \mathbb{R})_0.$$
    Then there is a sheaf $\mathscr{F}_\text{dbl}'$ with singular support in $\widetilde{L}_{-\epsilon} \cup \widetilde{L}_{\epsilon}$, given by the antimicrolocalization functor \cite[Section 3.15]{JinTreu}, whose microlocalization along $\widetilde{L}_{-\epsilon}$ gives the local system $\mathscr{F}_L$. Write $D_{\pm \epsilon} = \{(x, t) | t = f_U(x) \pm \epsilon\}$. Indeed the sheaf is supported in the region
    $$D_{[-\epsilon,\epsilon)} = \{(x, t) | f_U(x)-\epsilon \leq t < f_U(x)+\epsilon\}$$
    with stalk $F$. This is because the sheaf has zero stalk below $D_{-\epsilon} = \{(x, t) | t = f_U(x)-\epsilon\}$ and hence for sufficiently small $\epsilon' > \epsilon$ (as in Example \ref{microstalk-cone})
    $$\mathscr{F}_L = m_{\widetilde{L}_{-\epsilon}}(\mathscr{F}_\text{dbl}') \simeq \mathrm{Tot}(\mathscr{F}_\text{dbl}'|_{D_{-\epsilon}} \rightarrow \mathscr{F}_\text{dbl}'|_{D_{-\epsilon'}}) \simeq \mathscr{F}_\text{dbl}'|_{D_{-\epsilon}}.$$
    In addition, the monodromy of the local system in the region $D_{[-\epsilon,\epsilon)}$ bounded by $\pi(\widetilde{L}_{-\epsilon})$ and $\pi(\widetilde{L}_\epsilon)$ is also determined by $\mathscr{F}_L$, since for $\pi_M: M \times \mathbb{R} \rightarrow M$,
    $$\mathscr{F}_\text{dbl}'|_{D_{[-\epsilon,\epsilon)}} = \pi_M^{-1}(\mathscr{F}_\text{dbl}'|_{D_{-\epsilon}})|_{D_{[-\epsilon,\epsilon)}}.$$

    Now we consider a Legendrian isotopy to move $\widetilde{L}_{\epsilon}$ along the positive Reeb direction. Jin-Treumann showed that \cite[Section 3.18]{JinTreu} for $S > T > 0$ sufficiently large we have
    \[\xymatrix@C=4em{
    Sh_{\widetilde{L}_{-\epsilon} \cup \widetilde{L}_{\epsilon+S}}(M \times \mathbb{R}) \ar[r]^{j_{M \times (-\infty,T)}^{-1}} & Sh_{\widetilde{L}_{-\epsilon}}(M \times (-\infty, T)) & Sh_{\widetilde{L}_{-\epsilon}}(M \times \mathbb{R}) \ar[l]_{\hspace{20pt}\sim},
    }\]
    and hence one can get a sheaf $\mathscr{F}'$ in $Sh_{\widetilde{L}_{-\epsilon}}(M \times \mathbb{R})$ with stalk $F$ supported in the region $D_{[-\epsilon,+\infty)} = \{(x, t) | t \geq f_U(x)-\epsilon\}$ above $D_{-\epsilon}$, and the monodromy in this region determined by $\mathscr{F}_L$ since
    $$\mathscr{F}'|_{D_{[-\epsilon,+\infty)}} = \pi_M^{-1}(\mathscr{F}'|_{D_{-\epsilon}})|_{D_{[-\epsilon,+\infty)}}.$$

    Finally we push forward the sheaf to $Sh^b_{\nu^*_{U,-}M}(M)_0$ via the projection $\pi_M: M \times \mathbb{R} \rightarrow M$. Note that in the fiber of the projection $\{x\} \times \mathbb{R}$ ($x\in U$), the sheaf is $F_{r \geq f_U(x)}$, and $\pi_{x,*}(F_{r \geq f_U(x)}) = F$. Hence the projection will give a sheaf supported on $U$ with stalk $F$. In addition we claim that the monodromy defines the local system $\mathscr{F}_U = \pi_{L,*}\mathscr{F}_L$ on $\overline{U}$ because the projection of $L$ onto $M$ via $\widetilde{L} \hookrightarrow T^{*,\infty}_{\tau>0}(M \times \mathbb{R}) \rightarrow M \times \mathbb{R} \rightarrow M$ coincides with the projection $\pi_L: L \hookrightarrow T^*M \rightarrow M$ which gives the diffeomorphism $L \cong U$.

    Hence $\Phi_{L} \simeq \Psi_{L}^{JT}: Loc^b(L) \rightarrow Sh^b(M)$ when $L = L_U$ is a standard Lagrangian brane corresponding to the open subset $U \subset M$.
\end{proof}


\section{Examples and Applications}

    We now focus on some concrete examples of Legendrian submanifolds and Lagrangian cobordisms and explain what the Lagrangian cobordism functor on sheaves will tell us.

\subsection{Examples of cobordism functors}\label{elementary}
    We consider the elementary Lagrangian cobordism given by attaching a Lagrangian $k$-handle ($0 \leq k \leq n$). The local model of the front projection in $\mathbb{R}^{n+1}$ is shown in Figure \ref{1handleattach} and \ref{2handleattach}.

    The front projection of $\Lambda_\pm$ gives a stratification on $\mathbb{R}^{n+1}$, such that on each stratum the sheaf is locally constant. Hence we are able to get a combinatoric model given by stalks on each stratum and the transition maps given by the microlocal Morse lemma as in Example \ref{combin-model} and \ref{microstalk-cone}. We will explain how objects behave under the cobordism functor.

    For the stratification given by $\Lambda_\pm$, denote by $V_\pm \subset \mathbb{R}^{n+1}$ the domain whose $x_{n+1}$-coordinate is bounded by the front projection of $\Lambda_\pm$ and $U_\pm \subset \mathbb{R}^{n+1}$ the domain whose $x_{n+1}$-coordinate is unbounded on each vertical slice $\{(x_1, \dots, x_n)\} \times \mathbb{R}$ (see Figure \ref{1handleattach} and \ref{2handleattach}). For a sheaf in $Sh^b_{\Lambda_-}(\mathbb{R}^{n+1})$, suppose the stalk in the region $V_-$ is $B$ and the stalk in $U_-$ is $A$ (Figure \ref{1handlesheaf}). Suppose the microstalk of $\mathscr{F}$ is
    $$F \simeq \mathrm{Tot}(A \rightarrow B).$$

\begin{figure}
  \centering
  \includegraphics[width=0.75\textwidth]{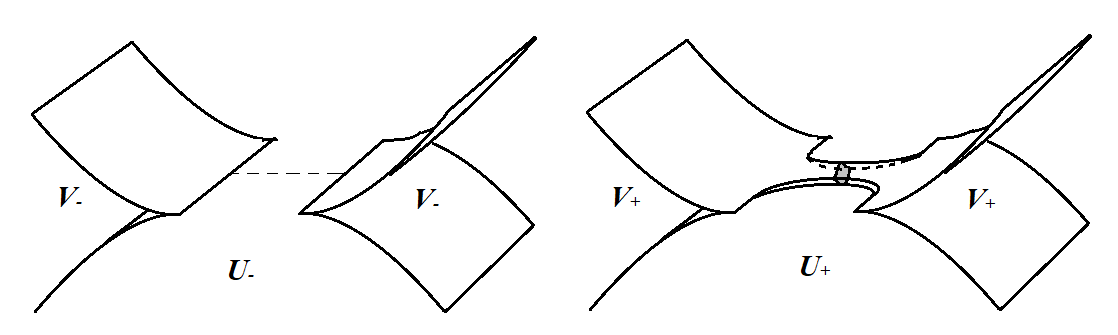}\\
  \caption{On the left is the front projection of $\Lambda_-$, and on the right is the front projection of $\Lambda_+$ after attaching a Lagrangian 1-handle connecting the two cusps along the dashed line, where in the middle of the tube (the grey slice) there is a unique Reeb chord.}\label{1handleattach}
\end{figure}
\begin{figure}
  \centering
  \includegraphics[width=0.7\textwidth]{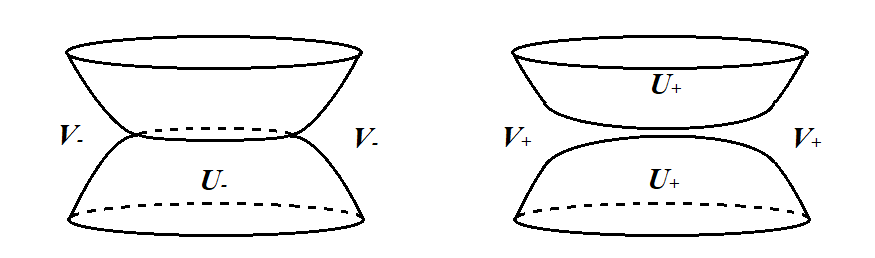}\\
  \caption{On the left is the front projection of $\Lambda_-$, and on the right is the front projection of $\Lambda_+$ after attaching a Lagrangian 2-handle connecting the $S^1$-family of cusps along the disk.}\label{2handleattach}
\end{figure}

    Instead of doing concrete computations, we will describe objects under the Lagrangian cobordism functor in three steps by only using a few properties of our functor:
\begin{enumerate}
  \item determine the stalks in $U_+$ and $V_+$ using the fact that the cobordism functor is identity outside a compact set in $\mathbb{R}^{n+1}$ and hence the stalks are preserved;
  \item determine the microlocalization along $\Lambda_+$ (relative to boundary), which is a local system with stalk $F$, using the fact that the Liouville flow fixes the end $\Lambda_+$ and hence the cobordism functor preserves the microlocalization;
  \item determine the local system in $V_+$ using the fact that $B \simeq A \oplus F$, and hence the local system with stalk $F$ on $\Lambda_+$ determines a local system with stalk $F$ on $V_+$ and a local system with stalk $B$ on $V_+$ (relative to boundary at infinity in $\mathbb{R}^{n+1}$).
\end{enumerate}
    The information above will uniquely determine the sheaf.

    Before starting to determine the sheaf $\mathscr{F}^+ \in Sh^b_{\Lambda_+}(\mathbb{R}^{n+1})$, we first note that $\mathscr{F}^- \in Sh^b_{\Lambda_-}(\mathbb{R}^{n+1})$ has an image in $Sh^b_{\Lambda_+}(\mathbb{R}^{n+1})$ via the cobordism functor iff it can be lifted into
    $$Sh^b_{\Lambda_-}(\mathbb{R}^{n+1}) \times_{Loc^b(\Lambda_-)} Loc^b(L).$$
    Since $\Lambda_- \cong S^{k-1} \times D^{n-k+1}$ while $L \cong D^k \times D^{n-k+1}$, this is the same as saying that the microlocalization $m_{\Lambda_-}(\mathscr{F}^-)$ can be trivialized over $S^{k-1}$.

\begin{remark}
    Note that not all complexes of local systems in $Loc^b(S^k)\,(k \geq 2)$ are trivial. For example for the Hopf fibration $\pi: S^3 \rightarrow S^2$, $R\pi_*\Bbbk_{S^3}$ is a nontrivial complex of local system on $S^2$. The reason is that although $H^1(S^k) = 0$, $H^k(S^k) \neq 0$ and that will give extension classes in $Ext^1(\Bbbk_{S^k}[1-k], \Bbbk_{S^k})$.
\end{remark}

    Here is how the sheaf $\mathscr{F}^+$ is determined. (1)~Firstly, note that away from the cusps, the Lagrangian cobordism is just a standard embedded cylinder $\Lambda_0 \times \mathbb{R}$, and hence is fixed by the Liouville flow. The functor
    $$\mu Sh^b_{\mathbb{R}^{n+1} \cup \Lambda_0 \times \mathbb{R} \cup L}(\mathbb{R}^{n+1} \cup \Lambda_0 \times \mathbb{R} \cup L) \rightarrow \mu Sh^b_{\mathbb{R}^{n+1} \cup \Lambda_0 \times \mathbb{R}}(\mathbb{R}^{n+1} \cup \Lambda_0 \times \mathbb{R})$$
    is the identity. This shows that the sheaf should remain the same away from compact subsets in $\mathbb{R}^{n+1}$. Then one can see explicitly that the stalks of $\mathscr{F}_+$ are determined by $\mathscr{F}_-$, where the stalk in the region $V_+$ must be $B$ and the stalk in $U_+$ must be $A$.

    (2)~Secondly, note that the complex of local systems $m_{\Lambda_-}(\mathscr{F}_-)$ on $\Lambda_-$ has stalk $F$. After gluing with a local system $\mathscr{L}_L$ on $L$, by restriction we can determine a complex of local systems on $\Lambda_+ \times \{+\infty\}$. Note that the restriction of the local system along $\partial L = \partial \Lambda_\pm \times \mathbb{R}$ is determined by the microlocalization on $\partial \Lambda_-$.

    Since $\Lambda_+ \times \{+\infty\}$ is preserved by the negative time Liouville flow $\varphi_Z^z$ up to a Reeb translation (due to the change of the primitive $f_L$ of $L$), the functor
    \[\begin{split}
    \mu Sh&{}^b_{M \cup \Lambda_- \times \mathbb{R}_{>0} \cup L}(M \cup \Lambda_- \times \mathbb{R}_{>0} \cup L) \\
    &\, \rightarrow \mu Sh^b_{\lim_{z \rightarrow -\infty}\varphi_Z^z(M \cup \Lambda_- \times \mathbb{R}_{>0} \cup L)}(\lim{}_{z \rightarrow -\infty}\varphi_Z^z(M \cup \Lambda_- \times \mathbb{R}_{>0} \cup L))\\
    &\, \rightarrow \mu Sh^b_{M \cup \Lambda_+ \times \mathbb{R}_{>0}}(M \cup \Lambda_+ \times \mathbb{R}_{>0})
    \end{split}\]
    is an equivalence on $\Lambda_+ \times \{+\infty\}$ induced by the Reeb translation (Theorem \ref{cont-trans}). Hence the complex of local systems on $\Lambda_+ \times \{+\infty\}$ is preserved by the nearby cycle functor. Therefore, after applying $\Phi_L$, the microstalk on $\Lambda_+$ is still $F$, where the microlocal monodromy is still the same as the restriction of the local system $\mathscr{L}_L$ onto $\Lambda_+$.

    Note that the restriction of the local system to boundary $\mathscr{L}_L|_{\partial \Lambda_\pm \times \mathbb{R}}$ is the pull back of the given local system $m_{\partial\Lambda_-}(\mathscr{F}_-)$. Therefore, after applying the cobordism functor we get the microlocalization in the fiber of $Loc^b(\Lambda_+) \rightarrow Loc^b(\partial \Lambda_+)$ at the point  $m_{\partial\Lambda_-}(\mathscr{F}_-)$.

\begin{figure}
  \centering
  \includegraphics[width=0.75\textwidth]{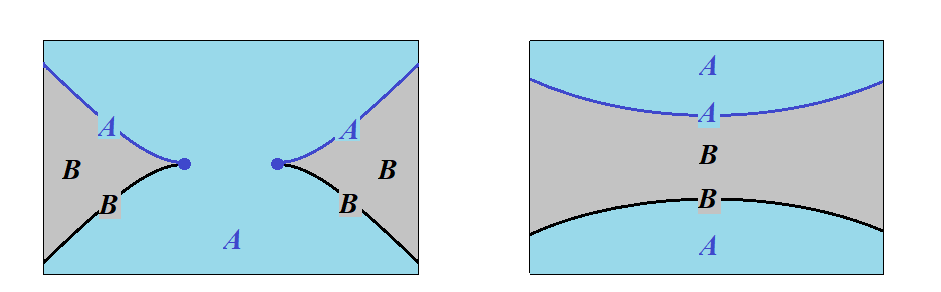}\\
  \caption{The microlocal sheaf on $Sh^b_{\Lambda_-}(\mathbb{R}^{n+1})$ (left) and $Sh^b_{\Lambda_+}(\mathbb{R}^{n+1})$ (right) before and after the Lagrangian 1-handle attachment. Here we assume $\Lambda_\pm \subset T^{*,\infty}_{\tau>0}(\mathbb{R}^n \times \mathbb{R}_\tau)$.}\label{1handlesheaf}
\end{figure}

    (3)~Finally, we determine the local system in the region $V_+$. Note that $V_+$ is not contractible relative to boundary at infinity $\partial V_+ = S^{k-1} \times D^{n-k+1}$. In particular, globally there could be nontrivial monodromy coming from our choice of the local monodromy relative to boundary, parametrized by the fiber of $Loc^b(V_+) \rightarrow Loc^b(\partial V_+)$. Because there are transition maps
    $$A \rightarrow B \rightarrow A$$
    whose composition is a quasi-isomorphism. Without loss of generality, we assume that it is the identity \cite[Corollary 3.18]{STZ}. Then there is a splitting of chain complexes
    $$B \simeq A \oplus \mathrm{Tot}(A \rightarrow B) \simeq A \oplus F.$$
    Therefore since the microlocal monodromy along $\Lambda_+$ has been determined by the local system on $L$ we chose, so is the monodromy of the sheaf in $V_+$ if we identify $\mathscr{F}_+|_{V_+}$ with $A_{V_+} \oplus \mathscr{L}_{V_+}$, where $A_{V_+}$ is just the constant local system and $\mathscr{L}_{V_+}$ is a local system on $V_+$ with stalk $F$ that extends $\mathscr{L}_L|_{\Lambda_+}$.

    In fact topologically $(V_+, \partial V_+) \simeq (L, \Lambda_-) \simeq (D^k \times D^{n-k+1}, S^{k-1} \times D^{n-k+1})$ by considering the projection map $L \hookrightarrow \mathbb{R}^{2n+1} \times \mathbb{R} \rightarrow \mathbb{R}^{2n+1} \rightarrow \mathbb{R}^{n+1}$. We claim that $\mathscr{L}_{V_+} \cong \mathscr{L}_L$ relative to the boundary $\partial V_+ \cong S^{k-1} \times D^{n-k+1} \cong \Lambda_-$, meaning that they live in the same fiber of the restriction functor. Indeed, the restriction of $\mathscr{L}_{V_+}$ and $\mathscr{L}_L$ to $\Lambda_+$ should both be $\mathscr{L}_L|_{\Lambda_+}$, but $\mathscr{L}_L|_{\Lambda_+}$ extends uniquely to $L$ since the inclusion $\Lambda_+ \hookrightarrow L$ is just $D^k \times S^{n-k} \hookrightarrow D^k \times D^{n-k+1}$. Therefore $\mathscr{L}_{V_+} \simeq \mathscr{L}_L$ (respectively, the restriction of $\mathscr{L}_{\partial V_+}$ and $\mathscr{L}_{\Lambda_-}$ to $\partial \Lambda_+ \cong \partial \Lambda_-$ agree, but $\mathscr{L}_L|_{\partial \Lambda_+}$ extends uniquely to $\partial V_+$, so the local systems live in the same fiber).

    Now we look at several different $k$-handle attachments to see what these data are in specific cases when $0 \leq k \leq 2$.

\subsubsection{Lagrangian $1$-handle attachment}
    When $k = 1$ there are 2 disconnected strata inside the cusps of $\Lambda_-$ (Figure \ref{1handleattach} and \ref{1handlesheaf}). The sheaf $\mathscr{F}_- \in Sh^b_{\Lambda_-}(\mathbb{R}^{n+1})$ can be extended only when the microlocal monodromy along $S^0 \times \mathbb{R}^{n} \subset \Lambda_-$ can be extended to a local system along $D^{1} \times \mathbb{R}^{n} \subset L$. This is equivalent to saying that the microstalks on two components $F \simeq F'$.

    Let the stalk in the region $V_-$ bounded by the 2 cusps be $B, B'$ and let the stalk outside be $A$. Then using the splitting of chain complexes
    $$B \simeq A \oplus \mathrm{Tot}(A \rightarrow B) \simeq A \oplus F \simeq A \oplus \mathrm{Tot}(A \rightarrow B') \simeq B',$$
    where $F = \mathrm{Tot}(A \rightarrow B) \simeq \mathrm{Tot}(A \rightarrow B')$ is the microstalk, we know that the condition implies that $B \simeq B'$. After applying the cobordism functor, the stalk in $V_+$ bounded by the front of $\Lambda_+$ is $B$ and the stalk outside is $A$.

    There is a choice we need to make for the quasi-isomorphism between all the $B$'s, and that is coming from our choice for the local system on $L$. Different identifications may give different monodromies along $\Lambda_+$ relative to the boundary at infinity $\partial L = S^0 \times D^{n}$.

    Namely, when gluing with a local system on $L$, we assign an extra quasi-isomorphism $f_F$ bewteen the stalks $F$ on both components of $\Lambda_-$. After applying $\Phi_L$, the microstalk on $\Lambda_+$ is still $F$, where the quasi-isomorphism from $F$ on the left to $F$ on the right is given by $f_F$. Then by the quasi-isomorphism
    $$B \simeq A \oplus F,$$
    the transition map of $B$ from left to right will be given by $f_B = (\mathrm{id}_A, f_F)$.

    In particular, if the microstalk $F \simeq \Bbbk^r$ ($\mathscr{F}_-$ is pure), then the choices are classified by $GL_r(\Bbbk)$. When $F \simeq \Bbbk$ ($\mathscr{F}_-$ is simple), then the choices are classified by $\Bbbk^\times$.

\begin{remark}
    One can compare our computation with the computation in \cite[Section 5]{AlgWeave} for Legendrian links and \cite[Section 5.5]{CasalsZas} for Legendrian surfaces, by decomposing those cobordisms into a composition of Reidemeister moves and a handle attachment.
\end{remark}

    What we described is only the local picture, globally there are different possibilities. Let's fix $F \simeq \Bbbk$ (this means $\mathscr{F}_-$ is simple). (1)~When the 1-handle $L$ connects two different components of $\Lambda_-$, then
    $$H^1(\Lambda_-; \Bbbk^\times) \cong H^1(L; \Bbbk^\times).$$
    Consider the moduli space of rank 1 local systems on $\Lambda$ (coming from the truncated derived moduli stacks of local systems) given by $Loc_1(\Lambda) = [H^1(\Lambda; \Bbbk^\times) / H^0(\Lambda; \Bbbk^\times)]$, and consider the framed moduli space of rank 1 local systems on a manifold $\Lambda$ given by $Loc_1^\textit{fr}(\Lambda) = H^1(\Lambda; \Bbbk^\times)$ with framing data, i.e.~fixed trivializations of stalks, at each component. Then
    $$Loc_1(\Lambda_-) \times [\Bbbk^\times/\Bbbk^\times] \cong Loc_1(L), \;\, Loc_1^\textit{fr}(\Lambda_-) \cong Loc_1^\textit{fr}(L).$$
    Consider the truncated derived moduli stack of microlocal rank 1 sheaves $t_0\mathbb{R}\mathcal{M}_1(\Lambda_\pm)$. Denote by $\mathcal{M}_1(\Lambda_\pm)$ the classical moduli stacks defined by the 1-rigid locus (the 1-rigid locus of $t_0\mathbb{R}\mathcal{M}_1(\Lambda_\pm)$ consisting objects with no negative self-extensions is always an Artin stack, but they may not coincide with the derived stack)\footnote{The flag moduli space considered in \cite{TZ,CasalsZas} is, strictly speaking, slightly different as they do not remember the trivial $\Bbbk^\times$-action by only taking quotients of flags by $PGL_n(\Bbbk)$ instead of $GL_n(\Bbbk)$. The moduli spaces they consider are equal to $\mathcal{M}_1(\Lambda)$ considered here after further taking quotients by the trivial $\Bbbk^\times$-action.} \cite{STWZ}*{Section 2.4}. Assuming that these classical moduli stacks coincide with the derived stacks, we have an embedding
    $$\mathcal{M}_1(\Lambda_-) \times [\Bbbk^\times/\Bbbk^\times] = \mathcal{M}_1(\Lambda_-) \times_{Loc_1(\Lambda_-)} Loc_1(L) \hookrightarrow \mathcal{M}_1(\Lambda_+).$$
    Consider $\mathcal{M}_1^\textit{fr}(\Lambda_\pm)$ the classical moduli stacks defined by the 1-rigid locus with framing data at each component of $\Lambda_\pm$. Then we have an embedding
    $$\mathcal{M}_1^\textit{fr}(\Lambda_-) = \mathcal{M}_1^\textit{fr}(\Lambda_-) \times_{Loc_1^{fr}(\Lambda_-; \Bbbk^\times)} Loc_1^\textit{fr}(L; \Bbbk^\times) \hookrightarrow \mathcal{M}_1^\textit{fr}(\Lambda_+).$$

    (2)~When the 1-handle $L$ is attached on one component of $\Lambda_-$, then the moduli spaces of rank 1 local systems satisfy
    $$H^1(\Lambda_-; \Bbbk^\times) \times \Bbbk^\times \cong H^1(L; \Bbbk^\times).$$
    Therefore, for the moduli spaces of rank 1 local systems we know that
    $$Loc_1(\Lambda_-) \times [\Bbbk^\times/\Bbbk^\times] \cong Loc_1(L), \;\, Loc_1^\textit{fr}(\Lambda_-) \times \Bbbk^\times \cong Loc_1^\textit{fr}(L).$$
    Hence assuming that the classical moduli stacks of microlocal rank 1 sheaves $\mathcal{M}_1(\Lambda_\pm)$ coincide with the derived stacks, we have an embedding
    $$\mathcal{M}_1(\Lambda_-) \times [\Bbbk^\times/\Bbbk^\times] = \mathcal{M}_1(\Lambda_-) \times_{Loc_1(\Lambda_-)} Loc_1(L) \hookrightarrow \mathcal{M}_1(\Lambda_+).$$
    For the moduli stacks of microlocal rank 1 sheaves with framing data at each component $\mathcal{M}_1^{fr}(\Lambda_\pm)$, we have an embedding
    $$\mathcal{M}_1^{fr}(\Lambda_-) \times \Bbbk^\times = \mathcal{M}_1^\textit{fr}(\Lambda_-) \times_{Loc_1^\textit{fr}(\Lambda_-; \Bbbk^\times)} Loc_1^\textit{fr}(L; \Bbbk^\times) \hookrightarrow \mathcal{M}_1^\textit{fr}(\Lambda_+).$$

\begin{remark}
    In \cite{GaoShenWeng} the authors considered augmentation varieties for Legendrian links of positive $n$-braid closures, and for any such 2 Legendrian links connected by a 1-handle cobordism they showed that
    $$Aug(\Lambda_-) \times \Bbbk^\times \hookrightarrow Aug(\Lambda_+).$$
    That is because when considering $Aug(\Lambda)$ they always fix $n$ marked points and do not change the number of marked points when the number of components increases/decreases. This should be thought of as equivalent to the moduli space of microlocal rank 1 sheaves together with framing data at $n$ base points \cite{STWZ}*{Section 2.4} or equivalently trivialization data of microstalks at $n$ base points.
\end{remark}

\subsubsection{Lagrangian $2$-handle attachment}
    When $k = 2$, the sheaf $\mathscr{F}_- \in Sh^b_{\Lambda_-}(\mathbb{R}^{n+1})$ can be extended only when the microlocal monodromy along $S^1 \times \mathbb{R}^{n-1} \subset \Lambda_-$ can be extended to a local system along $D^{2} \times \mathbb{R}^{n-1} \subset L$. As $C^*(D^2; \Bbbk) \cong \Bbbk$, this is equivalent to saying that the microlocal monodromy is trivial along $S^1 \times \mathbb{R}^{n-1} \subset \Lambda_-$.

    As in the case $k = 1$, there is a choice we need to take into consideration which is the contracting homotopy from the local system on $S^1$ to the trivial one, and the choice of the contracting homotopy will give different (higher) monodromies relative to the boundary at infinity $\partial L = S^1 \times D^{n-1}$. Consider a triangulation of $D^2 = \Delta^2$. Then this gives a stratification $D^2$. The 1-dimensional strata gives us quasi-isomorphisms
    $$f_{01}: F \rightarrow F, \,\, f_{12}: F \rightarrow F, \,\, f_{02}: F \rightarrow F.$$
    For the 2-dimensional stratum, we need to assign an extra chain homotopy $H_{012}$ from $f_{02}$ to $f_{12} \circ f_{01}$, i.e.~$H_{012}: F \rightarrow F[-1]$ such that
    $$H_{012}\delta_F - \delta_FH_{012} = f_{02} - f_{12} \circ f_{01}.$$

    The (higher) monodromy along $\Lambda_+$ is preserved by the functor $\Phi_L$ and hence determines the microlocal monodromy of $\mathscr{F}_+$ along $\Lambda_+$. Using the quasi-isomorphism
    $$B \simeq A \oplus F,$$
    the monodromy data of $F$ determines the monodromy data of the stalk $B$ in $\mathscr{F}_+$.

    When $F \simeq \Bbbk^r$ (the sheaf is pure), then the contracting homotopy data is trivial, and hence any such sheaf with trivial monodromy in $Sh^p_{\Lambda_-}(M)$ extends uniquely to a sheaf in $Sh^p_{\Lambda_+}(M)$.

    Suppose the classical moduli stacks of microlocal rank $r$ sheaves $\mathcal{M}_r(\Lambda_\pm)$ coincide with the derived stacks (with fixed framing data at a point). For $[\beta] \in \pi_1(\Lambda_-)$, let $\mathcal{M}_r^{[\beta]}(\Lambda_-)$ be the substack consisting of sheaves with trivial microlocal monodromy along $\beta$. Then for $L$ a Lagrangian $2$-handle cobordism attached along $\beta$, we have an embedding of algebraic stacks
    $$\mathcal{M}_r^{[\beta]}(\Lambda_-) \hookrightarrow \mathcal{M}_r(\Lambda_+).$$
    For the moduli stacks of microlocal rank $r$ sheaves with framing data at each component, we get a similar embedding.

\subsubsection{Lagrangian $k$-handle attachment ($k\geq 3$)}
    When $k \geq 3$, we need to choose higher homotopy data to ensure that the monodromy of the complex of  local systems along the attaching sphere $S^{k-1} \times D^{n-k+1} \subset \Lambda_-$ can be extended to $D^k \times D^{n-k+1} \subset L$. The monodromy along $\Lambda_+$ is preserved by the functor $\Phi_L$ and hence determines the monodromy of $\mathscr{F}_+$ along $\Lambda_+$. Using the quasi-isomorphism
    $$B \simeq A \oplus F,$$
    the (higher) monodromy data of $F$ determines the (higher) monodromy data of $B$ in $\mathscr{F}_+$.

    However, in the special case when $F \simeq \Bbbk^r$, there will be no nontrivial higher homotopy data, and since the attaching sphere is changed from $S^1$ to $S^{k-1}\,(k \geq 3)$, we know that any local system is trivial, so any such pure sheaf in $Sh^p_{\Lambda_-}(M)$ extends uniquely to a pure sheaf in $Sh^p_{\Lambda_+}(M)$.

    Suppose the classical moduli stacks of microlocal rank $r$ sheaves $\mathcal{M}_r(\Lambda_\pm)$ coincide with the derived stacks (with fixed framing data at a point). Then for $L$ a Lagrangian $k$-handle cobordism ($k \geq 3$), we have an embedding of algebraic stacks
    $$\mathcal{M}_r(\Lambda_-) \hookrightarrow \mathcal{M}_r(\Lambda_+).$$
    For the moduli stacks of microlocal rank $r$ sheaves with framing data at each component, we get a similar embedding.

\subsection{Applications to Legendrian surfaces}\label{application}
    In this section we use the computation of the number of microlocal rank 1 sheaves to prove the results Theorem \ref{2graphs}. We will frequently refer to \cite{TZ} and \cite{CasalsZas} for the theory of Legendrian weaves (which are certain type of Legendrian surfaces) and their moduli space of microlocal rank 1 sheaves.

%
    First, we recall that the correspondence between the front projection of Legendrian weaves and their planar graphs are illustrated in Figure \ref{weavedef}. The combinatoric constructions of Lagrangian handle attachments for Legendrian weaves are illustrated in Figure \ref{cubichandle}, and proved in \cite[Theorem 4.10]{CasalsZas}.

\begin{figure}
  \centering
  \includegraphics[width=0.8\textwidth]{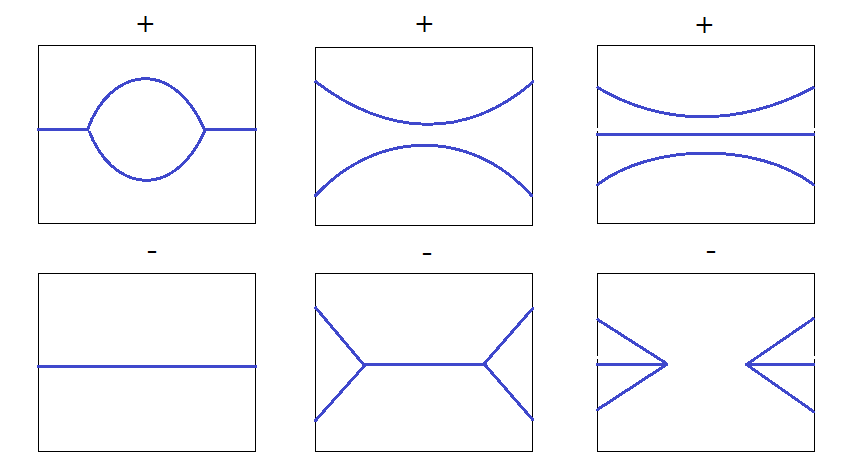}\\
  \caption{The graph on the left is a Lagrangian 1-handle attachment in Legendrian weaves; in the middle is a Lagrangian 2-handle attachment in Legendrian weaves; on the right is a Legendrian connected sum cobordism. $\Lambda_+$ are on the top while $\Lambda_-$ are on the bottom.}\label{cubichandle}
\end{figure}

\begin{figure}
  \centering
  \includegraphics[width=1.0\textwidth]{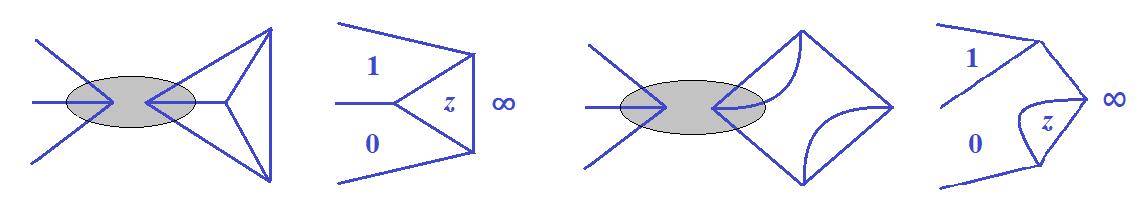}\\
  \caption{Taking connected sum with $\Lambda_\text{Cliff}$ (left) and with $\Lambda_\text{Unknot}$ (right). The cobordisms are from left to right in each picture. The labelling $0, 1, \infty, z$ is a $\Bbbk P^1$ coloring (so that regions sharing a common edge have different colors), which determines a microlocal rank 1 sheaf.}\label{connectsum}
\end{figure}

\begin{proof}[Proof of Theorem \ref{2graphs}]
    (1)~We start from $\Lambda_{g,k}$. Consider the local picture near a degree 3 vertex of the graph. Consider a Lagrangian 1-handle cobordism in the shadowed region (Figure \ref{2graphcobordism} left). This will give a cobordism from $\Lambda_{g,k}$ to $\Lambda_{g+1,k}$. Then consider a Lagrangian 2-handle cobordism in the shadowed region (Figure \ref{2graphcobordism} middle). This gives a cobordism from $\Lambda_{g+1,k}$ to $\Lambda_{g,k}$. For general $\Lambda_{g,k}, \Lambda_{g',k'}$, the cobordism can be constructed by concatenation.

\begin{figure}
  \centering
  \includegraphics[width=0.8\textwidth]{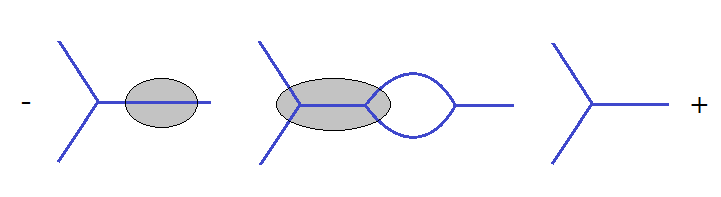}\\
  \caption{The cobordism from $\Lambda_{g,k}$ to $\Lambda_{g+1,k}$ to $\Lambda_{g,k}$ (from left to right). The grey regions are the places where we attach Lagrangian handles.}\label{2graphcobordism}
\end{figure}

    (2)~This is essentially proved by Dimitroglou Rizell \cite{Rizchordexample}. First of all, notice that $\Lambda_{g,0}$ admits an exact Lagrangian filling by taking a sequence of Lagrangian 1-handle cobordisms (Figure \ref{unknotfilling}). Next, we claim that for any $k \geq 1$, $\Lambda_{g',k}$ does not admit exact Lagrangian fillings. Assuming the claim, then clearly there cannot be exact Lagrangian cobordisms from $\Lambda_{g,0}$ to $\Lambda_{g',k}$.

\begin{figure}
  \centering
  \includegraphics[width=0.8\textwidth]{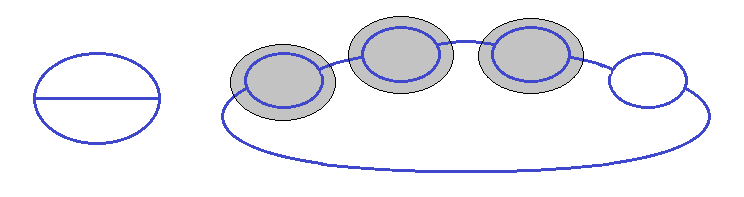}\\
  \caption{The Lagrangian filling of the Legendrian surface $\Lambda_{g,0}$ by Lagrangian 1-handle cobordisms in all the shadowed regions and finally fill the unknot on the left by a Lagrangian disk.}\label{unknotfilling}
\end{figure}

    We now prove the claim using sheaves. One way is to apply \cite[Theorem 1.3]{TZ}. An alternative approach is the following \cite[Theorem 5.12]{CasalsZas}. First, we know that the flag moduli spaces in \cite{TZ,CasalsZas} as spaces of flags modulo $PGL_2(\Bbbk)$-actions are identified with the framed moduli space of sheaves $\mathcal{M}_1^\textit{fr}(\Lambda)$ with framing data at a single point since
    $$\mathcal{M}_1(\Lambda) = [\mathcal{M}_1^\textit{fr}(\Lambda) / \Bbbk^\times]$$
    where the moduli stack $\mathcal{M}_1(\Lambda)$ is equivalent to the space of flags modulo $GL_2(\Bbbk)$-action. When $k \geq 1$, one can consider locally a triangle in the graph. A microlocal rank 1 sheaf is characterized by a $\Bbbk P^1$ colorings of regions (such that any regions sharing a common edge have different colors). Without loss of generality, one can assume that outside the triangle, the 3 regions are colored by $0, 1$ and $\infty$ (Figure \ref{connectsum}). Then
    the possible choices for colors in the triangle region are $\Bbbk^\times \backslash \{1\}$, i.e.
    $$\mathcal{M}_1^\textit{fr}(\Lambda_{g',k}) = \mathcal{M}_1^\textit{fr}(\Lambda_{g'-1,k-1}) \times (\Bbbk^\times \backslash \{1\}).$$
    When $\Bbbk = \mathbb{Z}/2\mathbb{Z}$, then there are no available choices and hence there are no microlocal rank 1 sheaves with $\mathbb{Z}/2\mathbb{Z}$-coefficients on $\Lambda_{g',k}$. Hence there cannot be any Lagrangian fillings. The claim is proved.

    (3)~First we should note that as explained in \cite[Example 4.26]{CasalsZas} there is a Lagrangian cobordism $L_0$ from $\Lambda_\text{Cliff}$ to a loose Legendrian 2-sphere $\Lambda_{S^2,\text{loose}}$ by a Lagrangian 2-handle attachment (Figure \ref{cliffloose}), where the fact that $\Lambda_{S^2,\text{loose}}$ is loose follows from \cite[Proposition 4.24]{CasalsZas}. Hence there is a Lagrangian cobordism from $\Lambda_{g,k} = \Lambda_{g-1,k-1} \# \Lambda_\text{Cliff}$ to a genus $g-1$ surface $\Lambda_{g-1,k-1} \# \Lambda_{S^2,\text{loose}}$, and $\Lambda_{g-1,k-1} \# \Lambda_{S^2,\text{loose}}$ is also loose.

\begin{figure}
  \centering
  \includegraphics[width=0.6\textwidth]{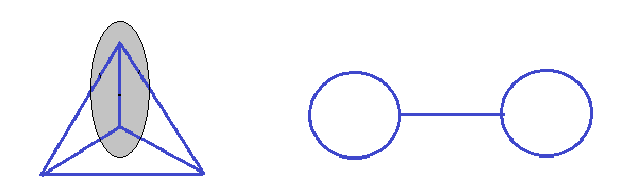}\\
  \caption{A Lagrangian 2-handle cobordism from $\Lambda_{S^2,\text{loose}}$ (right) to $\Lambda_\text{Cliff}$ (left).}\label{cliffloose}
\end{figure}

    We now apply \cite[Theorem 2.2]{LagCap}\footnote{The author thanks Emmy Murphy for pointing out that the Lagrangian cap construction helps build cobordisms in this setting.}. First we need to construct a formal Lagrangian cobordism, that is, a smooth cobordism $\psi: L_1 \hookrightarrow \mathbb{R}^6$ from $\Lambda_{g-1,k-1} \# \Lambda_{S^2,\text{loose}}$ to $\Lambda_{g,k'}$, and a family of bundle maps $\Psi_s: TL_1 \rightarrow T\mathbb{R}^6|_{L_1}$ such that $\Psi_0 = d\psi$, $\Psi_s \equiv d\psi$ near positive and negative ends, and $\Psi_1$ is a Lagrangian bundle map.

    Note that $\Lambda_{g,k'}$ and $\Lambda_{g,k-1}$ are formally Legendrian isotopic for any $k \geq 1, k' \geq 0$. This means that there is a smooth isotopy $\psi'_t: \Lambda_t \hookrightarrow \mathbb{R}^5,\,t \in I$, together with a family of bundle maps $\Psi'_{s,t},\,s, t\in I$, such that $\Psi'_{s,0} = d\psi'_0$, $\Psi'_{s,1} = d\psi'_1$, $\Psi'_{0,t} = d\psi'_t$, and $\Psi'_{1,t}$ are Lagrangian bundle maps into the contact distribution. Given a formal Legendrian isotopy, we can consider a smooth cobordism $L = \Lambda \times I$ from $\Lambda_0$ to $\Lambda_1$ being
    $$\psi: L \hookrightarrow \mathbb{R}^5 \times I,\,\, \psi(x, t) = (\psi'_t(x), t),$$
    and a family of bundle maps $\Psi_s: TL \rightarrow T(\mathbb{R}^5 \times I)|_{L}$ such that $\Psi_0 = d\psi$ and $\Psi_1$ is a Lagrangian bundle map by considering the homotopy such that
    $$\Psi_s|_{T\Lambda_t} = \Psi_{s,t}, \,\, \Psi_s|_{\left<\partial/\partial t\right>} = (1-s)d\psi(\partial/\partial t) + s\,\partial/\partial r.$$
    Therefore, we can get a formal Lagrangian concordance from $\Lambda_{g,k-1}$ to $\Lambda_{g,k'}$. By part~(1) we know that there is a genuine Lagrangian cobordism from $\Lambda_{g-1,0}$ to $\Lambda_{g,k-1}$. Thus by concatenation, we will get a formal Lagrangian cobordism $(L_1, \psi, \Psi_s)$ from $\Lambda_{g-1,k-1} \# \Lambda_{S^2,\text{loose}}$ to $\Lambda_{g,k'}$, and in fact
    $$H^1(L_1) \xrightarrow{\sim} H^1(\Lambda_{g-1,k-1} \# \Lambda_{S^2,\text{loose}}).$$

    Then by \cite[Theorem 2.2]{LagCap} there is a Lagrangian cobordism $L_1$ from $\Lambda_{g-1,k-1} \# \Lambda_{S^2,\text{loose}}$ to $\Lambda_{g,k'}$ such that
    $$H^1(L_1) \xrightarrow{\sim} H^1(\Lambda_{g-1,k-1} \# \Lambda_{S^2,\text{loose}}) = \Bbbk^{2g-2}.$$
    Taking the concatenation of $L_0$ and $L_1$, we will get a Lagrangian cobordism such that
    $$\dim \mathrm{coker}(H^1(L_0 \cup L_1) \rightarrow H^1(\Lambda_{g,k})) = 2.$$

    (4)~We show that there cannot be Lagrangian cobordisms $L$ with vanishing Maslov class from $\Lambda_{g,k}$ to $\Lambda_{g,k'}$ for $k < k'$ such that $H^1(L) \twoheadrightarrow H^1(\Lambda_{g,k})$. Indeed consider a degree 3 vertex in the graph of $\Lambda_{g-1,k-1}$. Taking connected sum with $\Lambda_\text{Cliff}$ and $\Lambda_\text{Unknot}$ will give $\Lambda_{g,k}$ and $\Lambda_{g,k-1}$. As explained in Part~(2), a microlocal rank 1 sheaf is characterized by the number $\Bbbk P^1$ colorings of the graph (Figure \ref{connectsum}). The possible choices for colors in the triangle region are $\Bbbk^\times \backslash \{1\}$,
    $$\mathcal{M}_1^\textit{fr}(\Lambda_{g,k}) = \mathcal{M}_1^\textit{fr}(\Lambda_{g-1,k-1}) \times (\Bbbk^\times \backslash \{1\}).$$
    On the other hand, for $\Lambda_{g,k-1}$, assume the upper half region and lower half region are colored $0, \infty$ (Figure \ref{connectsum}). Then the possible choices for colors in the bi-gon region are $\Bbbk^\times$, i.e.
    $$\mathcal{M}_1^\textit{fr}(\Lambda_{g,k-1}) = \mathcal{M}_1^\textit{fr}(\Lambda_{g-1,k-1}) \times \Bbbk^\times.$$
    In particular, $|\mathcal{M}_1^\textit{fr}(\Lambda_{g,k})(\mathbb{F}_q)| < |\mathcal{M}_1^\textit{fr}(\Lambda_{g,k-1})(\mathbb{F}_q)|$, and by induction $|\mathcal{M}_1^\textit{fr}(\Lambda_{g,k'})(\mathbb{F}_q)| < |\mathcal{M}_1^\textit{fr}(\Lambda_{g,k})(\mathbb{F}_q)|$.

    When $H^1(L) \twoheadrightarrow H^1(\Lambda_{g,k})$ is surjective, in the fiber product
    \[\xymatrix{
    \mathcal{M}_1^\textit{fr}(\Lambda_{g,k-1}) \times_{H^1(\Lambda_{g,k}; \Bbbk^\times)} H^1(L; \Bbbk^\times) \ar[r]^{\hspace{60pt}\widehat r} \ar[d] & \mathcal{M}_1^\textit{fr}(\Lambda_{g,k}) \ar[d]^P \\
    H^1(L; \Bbbk^\times) \ar[r]^r & H^1(\Lambda_{g,k}; \Bbbk^\times),
    }\]
    the horizontal map $r$ at the bottom is a projection map, and hence so is $\widehat r$ (in fact the vertical map on the right $P$ is called the period map in \cite[Section 4.7]{TZ}). Therefore
    $$|(\mathcal{M}_1^\textit{fr}(\Lambda_{g,k}) \times_{H^1(\Lambda_{g,k}; \Bbbk^\times)} H^1(L; \Bbbk^\times))(\mathbb{F}_q)| \geq |\mathcal{M}_1^\textit{fr}(\Lambda_{g,k})(\mathbb{F}_q)| > |\mathcal{M}_1^\textit{fr}(\Lambda_{g,k'})(\mathbb{F}_q)|.$$
    However, a fully faithful Lagrangian cobordism functor $\Phi_L$ should induce an embedding
    $$\mathcal{M}_1^\textit{fr}(\Lambda_{g,k}) \times_{H^1(\Lambda_{g,k}; \Bbbk^\times)} H^1(L; \Bbbk^\times) \hookrightarrow \mathcal{M}_1^\textit{fr}(\Lambda_{g,k'}).$$
    That is a contradiction.
\end{proof}

\begin{remark}
    For the Lagrangian cobordism from $\Lambda_{g,k}$ to $\Lambda_{g+1,k}$ and to $\Lambda_{g,k}$, one can see (in Figure \ref{2graphcobordism}) that the ascending manifold of Lagrangian 1-handle and the descending manifold of the Lagrangian 2-handle have geometric intersection number 1. Since these Lagrangians are regular \cite{FlexLag}, one can cancel the pair of critical points so that the Lagrangian is Hamiltonian isotopic to a cylinder.
\end{remark}

\bibliography{legendriansheaf}

@article{AbGenerate,
  title={A geometric criterion for generating the {F}ukaya category},
  author={Abouzaid, Mohammed},
  journal={Publications Math{\'e}matiques de l'IH{\'E}S},
  volume={112},
  pages={191--240},
  year={2010}
}

@article{AsplundEkholm,
  title={Chekanov-{E}liashberg dg-algebras for singular {L}egendrians},
  author={Asplund, Johan and Ekholm, Tobias},
  journal={arXiv preprint arXiv:2102.04858},
  year={2021}
}

@article{BCAug,
author = "Bourgeois, Fr¨¦d¨¦ric and Chantraine, Baptiste",
journal = "Journal of Symplectic Geometry",
number = "3",
pages = "553--583",
title = "Bilinearized {L}egendrian contact homology and the augmentation category",
volume = "12",
year = "2014"
}

@article{AlgWeave,
  title={Algebraic weaves and braid varieties},
  author={Casals, Roger and Gorsky, Eugene and Gorsky, Mikhail and Simental, Jos{\'e}},
  journal={arXiv preprint arXiv:2012.06931},
  year={2020}
}

@article{CasalsMurphydga,
title = "Differential algebra of cubic planar graphs",
journal = "Advances in Mathematics",
volume = "338",
pages = "401 - 446",
year = "2018",
author = "Roger Casals and Emmy Murphy",
}

@article{CasalsZas,
  title={Legendrian Weaves: {N}-graph Calculus, Flag Moduli and Applications},
  author={Roger Casals and Eric Zaslow},
  journal={arXiv preprint arXiv:2007.04943},
  year={2020}
}

@article{Chantraine,
  title={Lagrangian concordance of {L}egendrian knots},
  author={Chantraine, Baptiste},
  journal={Algebraic \& Geometric Topology},
  volume={10},
  number={1},
  pages={63--85},
  year={2010},
  publisher={Mathematical Sciences Publishers}
}

@article{Chekdga,
  title={Differential algebra of {L}egendrian links},
  author={Chekanov, Yuri},
  journal={Inventiones mathematicae},
  volume={150},
  number={3},
  pages={441--483},
  year={2002},
  publisher={Springer}
}

@book{CE,
  title={From {S}tein to {W}einstein and back: symplectic geometry of affine complex manifolds},
  author={Cieliebak, Kai and Eliashberg, Yakov},
  volume={59},
  year={2012},
  publisher={American Mathematical Soc.}
}

@article{Cthulhu,
  title={Floer theory for {L}agrangian cobordisms},
  author={Chantraine, Baptiste and Rizell, Georgios Dimitroglou and Ghiggini, Paolo and Golovko, Roman},
  journal={arXiv preprint arXiv:1511.09471},
  year={2015}
}

@article{Rizchordexample,
  title={Knotted {L}egendrian surfaces with few {R}eeb chords},
  author={Dimitroglou Rizell, Georgios},
  journal={Algebraic \& Geometric Topology},
  volume={11},
  number={5},
  pages={2903--2936},
  year={2011},
  publisher={Mathematical Sciences Publishers}
}

@article{Ekcobordism,
  title={Rational symplectic field theory over {$\mathbb{Z}_2$} for exact {L}agrangian cobordisms},
  author={Ekholm, Tobias},
  journal={Journal of the European Mathematical Society},
  volume={10},
  number={3},
  pages={641--704},
  year={2008}
}

@incollection{SeidelIso,
  title={Rational {SFT}, linearized {L}egendrian contact homology, and {L}agrangian {F}loer cohomology},
  author={Ekholm, Tobias},
  booktitle={Perspectives in analysis, geometry, and topology},
  pages={109--145},
  year={2012},
  publisher={Springer}
}

@article{EkSurgery,
  title={Holomorphic curves for {L}egendrian surgery},
  author={Ekholm, Tobias},
  journal={arXiv preprint arXiv:1906.07228},
  year={2019}
}

@article{EESNoniso,
author = "Ekholm, Tobias and Etnyre, John and Sullivan, Michael",
journal = "Journal of Differential Geometry",
number = "1",
pages = "85--128",
title = "Non-isotopic {L}egendrian submanifolds in {$\mathbb{R}^{2n+1}$}",
volume = "71",
year = "2005"
}

@article{EESPtimeR,
 author = {Tobias Ekholm and John Etnyre and Michael Sullivan},
 journal = {Transactions of the American Mathematical Society},
 number = {7},
 pages = {3301--3335},
 publisher = {American Mathematical Society},
 title = {Legendrian Contact Homology in {$P \times \mathbb{R}$}},
 volume = {359},
 year = {2007}
}

@article{EHK,
  title={Legendrian knots and exact {L}agrangian cobordisms},
  author={Ekholm, Tobias and Honda, Ko and K{\'a}lm{\'a}n, Tam{\'a}s},
  journal={Journal of the European Mathematical Society},
  volume={18},
  number={11},
  pages={2627--2689},
  year={2016}
}

@article{EkholmLekili,
  title={Duality between {L}agrangian and {L}egendrian invariants},
  author={Ekholm, Tobias and Lekili, Yanki},
  journal={arXiv preprint arXiv:1701.01284},
  year={2017}
}

@incollection{SFT,
  title={Introduction to symplectic field theory},
  author={Eliashberg, Yakov and Givental, A and Hofer, Helmut},
  booktitle={Visions in mathematics},
  pages={560--673},
  year={2000},
  publisher={Springer}
}

@article{FlexLag,
  title={Flexible {L}agrangians},
  author={Eliashberg, Yakov and Ganatra, Sheel and Lazarev, Oleg},
  journal={International Mathematics Research Notices},
  volume={2020},
  number={8},
  pages={2408--2435},
  year={2020},
  publisher={Oxford University Press}
}

@book{EliMisha,
  title={Introduction to the {$ h $}-principle},
  author={Eliashberg, Yakov and Mishachev, Nikolai},
  number={48},
  year={2002},
  publisher={American Mathematical Soc.}
}

@article{LagCap,
  title={Lagrangian caps},
  author={Eliashberg, Yakov and Murphy, Emmy},
  journal={Geometric and Functional Analysis},
  volume={23},
  number={5},
  pages={1483--1514},
  year={2013},
  publisher={Springer}
}

@article{GPS1,
  title={Covariantly functorial wrapped {F}loer theory on {L}iouville sectors},
  author={Ganatra, Sheel and Pardon, John and Shende, Vivek},
  journal={Publications math{\'e}matiques de l'IH{\'E}S},
  volume={131},
  number={1},
  pages={73--200},
  year={2020},
  publisher={Springer}
}

@article{GPS2,
  title={Sectorial descent for wrapped {F}ukaya categories},
  author={Ganatra, Sheel and Pardon, John and Shende, Vivek},
  journal={arXiv preprint arXiv:1809.03427},
  year={2018}
}

@article{GPS3,
  title={Microlocal {M}orse theory of wrapped {F}ukaya categories},
  author={Ganatra, Sheel and Pardon, John and Shende, Vivek},
  journal={arXiv preprint arXiv:1809.08807},
  year={2018}
}

@article{AugSheafknot,
  title={Simple sheaves for knot conormals},
  author={Gao, Honghao},
  journal={arXiv preprint arXiv:1805.00914},
  year={2018}
}

@article{GaoShenWeng,
  title={Augmentations, fillings, and clusters},
  author={Gao, Honghao and Shen, Linhui and Weng, Daping},
  journal={arXiv preprint arXiv:2008.10793},
  year={2020}
}

@article{IdealBdy,
  title={Ideal {L}iouville Domains-a cool gadget},
  author={Giroux, Emmanuel},
  journal={arXiv preprint arXiv:1708.08855},
  year={2017}
}

@article{Gui,
  title={Quantization of conic {L}agrangian submanifolds of cotangent bundles},
  author={Guillermou, St{\'e}phane},
  journal={arXiv preprint arXiv:1212.5818},
  year={2012}
}

@article{Guisurvey,
  title={Sheaves and symplectic geometry of cotangent bundles},
  author={Guillermou, St{\'e}phane},
  journal={arXiv preprint arXiv:1905.07341},
  year={2019}
}

@article{GKS,
  title={Sheaf quantization of {H}amiltonian isotopies and applications to nondisplaceability problems},
  author={Guillermou, St{\'e}phane and Kashiwara, Masaki and Schapira, Pierre},
  journal={Duke Mathematical Journal},
  volume={161},
  number={2},
  pages={201--245},
  year={2012},
  publisher={Duke University Press}
}

@article{JinTreu,
  title={Brane structures in microlocal sheaf theory},
  author={Xin Jin and David Treumann},
  journal={arXiv preprint arXiv:1704.04291},
  year={2017}
}

@article{JinJhomo,
  title={Microlocal sheaf categories and the {$J$}-homomorphism},
  author={Jin, Xin},
  journal={arXiv preprint arXiv:2004.14270},
  year={2020}
}

@book{KS,
  title={Sheaves on Manifolds},
  author={Kashiwara, Masaki and Schapira, Pierre},
  volume={292},
  year={2013},
  publisher={Springer Science \& Business Media}
}

@article{Lazexample,
  title={Simplifying {W}einstein {M}orse functions},
  author={Lazarev, Oleg},
  journal={arXiv preprint arXiv:1808.03676},
  year={2018}
}

@article{Lazexample2,
  title={Geometric and algebraic presentations of {W}einstein domains},
  author={Oleg Lazarev},
  journal={arXiv preprint arXiv:1910.01101},
  year={2019}
}

@article{Loose,
  title={Loose {L}egendrian embeddings in high dimensional contact manifolds},
  author={Emmy Murphy},
  journal={arXiv preprint arxiv:1201.2245},
  year={2012}
}

@article{Subflexible,
  title={Subflexible symplectic manifolds},
  author={Murphy, Emmy and Siegel, Kyler},
  journal={Geometry \& Topology},
  volume={22},
  number={4},
  pages={2367--2401},
  year={2018},
  publisher={Mathematical Sciences Publishers}
}

@article{Nad,
  title={Microlocal branes are constructible sheaves},
  author={Nadler, David},
  journal={Selecta Mathematica},
  volume={15},
  number={4},
  pages={563--619},
  year={2009},
  publisher={Springer}
}

@article{NadNonchar,
  title={Non-characteristic expansions of {L}egendrian singularities},
  author={Nadler, David},
  journal={arXiv preprint arXiv:1507.01513},
  year={2015}
}

@article{NadWrapped,
  title={Wrapped microlocal sheaves on pairs of pants},
  author={Nadler, David},
  journal={arXiv preprint arXiv:1604.00114},
  year={2016}
}

@article{NadShen,
  title={Sheaf quantization in {W}einstein symplectic manifolds},
  author={Nadler, David and Shende, Vivek},
  journal={arXiv preprint arXiv:2007.10154},
  year={2020}
}

@article{NadZas,
  title={Constructible sheaves and the {F}ukaya category},
  author={Nadler, David and Zaslow, Eric},
  journal={Journal of the American Mathematical Society},
  volume={22},
  number={1},
  pages={233--286},
  year={2009}
}

@article{AugSheaf,
  title={Augmentations are sheaves},
  author={Ng, Lenhard and Rutherford, Dan and Shende, Vivek and Sivek, Steven and Zaslow, Eric},
  journal={arXiv preprint arXiv:1502.04939},
  year={2015}
}

@article{PanTorus,
  title={Exact {L}agrangian fillings of {L}egendrian {$(2, n)$}-torus links},
  author={Pan, Yu},
  journal={Pacific Journal of Mathematics},
  volume={289},
  number={2},
  pages={417--441},
  year={2017},
  publisher={Mathematical Sciences Publishers}
}

@article{PanAug,
  title={The augmentation category map induced by exact {L}agrangian cobordisms},
  author={Pan, Yu},
  journal={Algebraic \& Geometric Topology},
  volume={17},
  number={3},
  pages={1813--1870},
  year={2017},
  publisher={Mathematical Sciences Publishers}
}

@article{PanRuther,
  title={Functorial {LCH} for immersed {L}agrangian cobordisms},
  author={Pan, Yu and Rutherford, Dan},
  journal={arXiv preprint arXiv:1905.08730},
  year={2019}
}

@article{MicrolocalInfty,
  title={A lemma for microlocal sheaf theory in the $\infty $-categorical setting},
  author={Robalo, Marco and Schapira, Pierre},
  journal={Publications of the Research Institute for Mathematical Sciences},
  volume={54},
  number={2},
  pages={379--391},
  year={2018}
}

@article{AugSheafsurface,
  title={Sheaves via augmentations of {L}egendrian surfaces},
  author={Rutherford, Dan and Sullivan, Michael G},
  journal={arXiv preprint arXiv:1912.06186},
  year={2019}
}

@article{Sivek,
  title={The contact homology of {L}egendrian knots with maximal {T}hurston-{B}ennequin invariant},
  author={Sivek, Steven},
  journal={Journal of Symplectic Geometry},
  volume={11},
  number={2},
  pages={167--178},
  year={2013},
  publisher={International Press of Boston}
}

@article{ShenZas,
  title={The Chromatic {L}agrangian: wavefunctions and open {G}romov-{W}itten conjectures},
  author={Gus Schrader and Linhui Shen and Eric Zaslow},
  journal={in preparation},
}

@article{Shendeconormal,
  title={The conormal torus is a complete knot invariant},
  author={Vivek Shende},
  journal={arXiv preprint arXiv:1604.03520},
  year={2016}
}

@article{Shendehprinciple,
  title={Microlocal category for {W}einstein manifolds via {$h$}-principle},
  author={Shende, Vivek},
  journal={arXiv preprint arXiv:1707.07663},
  year={2017}
}

@article{STWZ,
  title={Cluster varieties from {L}egendrian knots},
  author={Shende, Vivek and Treumann, David and Williams, Harold and Zaslow, Eric},
  journal={Duke Mathematical Journal},
  volume={168},
  number={15},
  pages={2801--2871},
  year={2019},
  publisher={Duke University Press}
}

@article{STZ,
  title={Legendrian knots and constructible sheaves},
  author={Vivek Shende and David Treumann and Eric Zaslow},
  journal={Inventiones mathematicae},
  year={2017},
  volume={207},
  pages={1031-1133}
}

@article{Unbound,
  title={Resolutions of unbounded complexes},
  author={Spaltenstein, Nicolas},
  journal={Compositio Mathematica},
  volume={65},
  number={2},
  pages={121--154},
  year={1988}
}

@article{Sylvan,
  title={On partially wrapped {F}ukaya categories},
  author={Sylvan, Zachary},
  journal={Journal of Topology},
  volume={12},
  number={2},
  pages={372--441},
  year={2019},
  publisher={Wiley Online Library}
}

@article{SylvanOrlov,
  title={Orlov and {V}iterbo functors in partially wrapped {F}ukaya categories},
  author={Sylvan, Zachary},
  journal={arXiv preprint arXiv:1908.02317},
  year={2019}
}

@inproceedings{Tamarkin1,
  title={Microlocal condition for non-displaceability},
  author={Tamarkin, Dmitry},
  booktitle={Algebraic and Analytic Microlocal Analysis},
  pages={99--223},
  year={2013},
  organization={Springer}
}

@article{TZ,
  title={Cubic planar graphs and {L}egendrian surface theory},
  author={Treumann, David and Zaslow, Eric},
  journal={arXiv preprint arXiv:1609.04892},
  year={2016}
}

@article{ViterboSheaf,
  title={Sheaf Quantization of {L}agrangians and {F}loer cohomology},
  author={Viterbo, Claude},
  journal={arXiv preprint arXiv:1901.09440},
  year={2019}
}

@article{Zhou,
  title={Sheaf Quantization of {L}egendrian Isotopy},
  author={Zhou, Peng},
  journal={arXiv preprint arXiv:1804.08928},
  year={2018}
}

@article{NadNearby,
  title={A microlocal criterion for commuting nearby cycles},
  author={Nadler, David},
  journal={arXiv preprint arXiv:2003.11477},
  year={2020}
}

@article{KochNearby,
  title={Th{\'e}or{\`e}me de comparaison pour les cycles proches par un morphisme sans pente},
  author={Kochersperger, Matthieu},
  journal={Journal of Singularities},
  volume={16},
  pages={52--72},
  year={2017}
}

@article{MaiNearby,
  title={Cycles {\'e}vanescents alg{\'e}briques et topologiques par un morphisme sans pente},
  author={Maisonobe, Philippe},
  journal={J. Singul},
  volume={7},
  pages={157--189},
  year={2013},
  publisher={Citeseer}
}

@article{GroEliashGF,
  title={Lagrangian intersection theory: finite-dimensional approach},
  author={Eliashberg, Yasha and Gromov, Misha},
  journal={Translations of the American Mathematical Society-Series 2},
  volume={186},
  pages={27--118},
  year={1998},
  publisher={Providence [etc.] American Mathematical Society, 1949-}
}
\bibliographystyle{amsplain}

\end{document}